\documentclass[twoside,11pt]{article}

%

%
%
%
\usepackage{jmlr2e}



\jmlrheading{1}{2000}{1-48}{4/00}{10/00}{meila00a}{Marina Meil\u{a} and Michael I. Jordan}


\ShortHeadings{Sparse solutions of kernel herding}{Kazuma and Ken'ichiro}
\firstpageno{1}

\usepackage[T1]{fontenc}
\usepackage[utf8]{inputenc}

\usepackage{lmodern}
\usepackage{microtype}
\usepackage{csquotes}

\usepackage{mathtools}
\usepackage{color}
\usepackage{wrapfig}
\usepackage{float}
\usepackage{url}
\usepackage{boites}
\usepackage{mathrsfs}
\usepackage{algorithm}
\usepackage{algorithmic}
\usepackage{tabularx}
\usepackage{multicol}
\usepackage[subrefformat=parens]{subcaption}
\usepackage{hyperref}

\usepackage{bbm}
\usepackage{bm}


\newcommand{\zissu}{\mathbb{R}} 
\newcommand{\sizen}{\mathbb{N}} 
\newcommand{\seisu}{\mathbb{Z}} 

\newcommand{\argmax}{\mathop{\mathrm{argmax}}}
\newcommand{\argmin}{\mathop{\mathrm{argmin}}}

\renewcommand{\labelenumi}{(\arabic{enumi})}

\usepackage{url}
\usepackage{breakurl}
\usepackage{comment}
\usepackage{setspace}
\usepackage{stmaryrd}

\numberwithin{equation}{section}

\begin{document}

\title{Sparse solutions of the kernel herding algorithm\\ by improved gradient approximation}

\author{\name Kazuma Tsuji \email kazumatsuji.research@gmail.com \\
       \addr Mitsubishi UFG Bank, Ltd., \\ 2-7-1 Marunouchi, Chiyoda-ku, Tokyo 100-8388, Japan
       \AND
       \name Ken'ichiro Tanaka \email kenichiro@mist.i.u-tokyo.ac.jp \\
       \addr Graduate School of Information Science and Technology\\ 
	The University of Tokyo \\
	PRESTO \\
	Japan Science and Technological Agency (JST) \\ 
	Tokyo, Japan
}

\maketitle

\begin{abstract}
The kernel herding algorithm is used to construct quadrature rules in a reproducing kernel Hilbert space (RKHS). While the computational efficiency of the algorithm and stability of the output quadrature formulas are advantages of this method, the convergence speed of the integration error for a given number of nodes is slow compared to that of other quadrature methods. In this paper, we propose a  modified kernel herding algorithm whose framework was introduced in a previous study and aim to obtain sparser solutions while preserving the advantages of standard kernel herding. In the proposed algorithm, the negative gradient is approximated by several vertex directions, and the current solution is updated by moving in the approximate descent direction in each iteration. We show that the convergence speed of the integration error is directly determined by the cosine of the angle between the negative gradient and approximate gradient. Based on this, we propose new gradient approximation algorithms and analyze them theoretically, including through convergence analysis. In  numerical experiments, we confirm the effectiveness of the proposed algorithms in terms of sparsity of nodes and computational efficiency. Moreover, we provide a new theoretical analysis of the kernel quadrature rules with \emph{fully-corrective weights}, which realizes faster convergence speeds than those of previous studies. 
\end{abstract}

\begin{keywords}
kernel herding, Frank-Wolfe algorithm, kernel quadrature, numerical integration, kernel methods, sampling
\end{keywords}

\section{Introduction}
\subsection{Kernel quadrature}
Numerical integration of multivariate functions is indispensable in various fields, including statistics, economics, and physics. In statistical machine learning, numerical integration of multivariate functions is a commonly used tool. Specifically, in Bayesian inference, numerical integration is required in several situations, such as the marginalization of distributions and computation of expectations. 

Kernel quadrature is a type of numerical integration method. It is a quadrature rule for functions in a reproducing kernel Hilbert space\ (RKHS), represented as follows: 

\[ \int_{\Omega} f(x) \mu(\mathrm{d}x) \approx \sum_{i=1}^n \omega_i f(x_i ) \quad \left( \Omega \subset \zissu^d, f \in \mathcal{H}_K , \{ x_i \}_{i=1}^n \subset \Omega, \{ \omega_i \}_{i=1}^n \subset \zissu \right), \]
where $K$ is a positive definite kernel, $\mathcal{H}_K$ is an RKHS, and $\mu$ is a  probability measure. 
 One of the advantages of this method is the flexibility in the choice of an RKHS. By fixing an appropriate RKHS for functions we want to integrate, we can construct an effective integration formula for them. Moreover, it provides an analytically computable way to  optimally construct methods over large families of functions. In addition to the quadrature concept, kernel quadrature is also closely related to other research fields. The Bayesian quadrature \citep{diaconis1988bayesian, o1991bayes} is one such example. It lies in the context of probabilistic numerics \citep{larkin1972gaussian} and is closely related to uncertainty quantification. In the context of Bayesian quadrature, convergence analysis has been studied in \cite{briol2019probabilistic, kanagawa2020convergence}.  Sampling from probability distributions is another related topic (e.g., \cite{10.5555/3023549.3023562, pmlr-v38-lacoste-julien15, briol2017sampling, NEURIPS2019_7012ef03, teymur2020optimal}).  It is known that constructing a quadrature formula that minimizes the integration error is equivalent to minimizing maximum mean discrepancy (MMD) \citep{gretton2012kernel}, which is the distance between probability distributions. Therefore, the research on kernel quadrature provides insights into sampling methods that approximate probability distributions well.

Various methods are used to construct kernel quadrature rules, such as the sequential Bayesian quadrature algorithm \citep{10.5555/3020652.3020694} and orthogonal matching pursuit algorithm \citep{oettershagen2017construction}. In addition, kernel interpolation methods, such as the P-greedy algorithm \citep{de2005near}, can be employed because kernel quadrature rules can be derived by integrating kernel interpolation functions with respect to a measure.  Several methods, including the aforementioned algorithms, compute the optimized weights for a fixed set of nodes in each iteration. Although fast convergence can be expected for the optimized weights, the computation of weights is expensive because the linear equations with the coefficient matrices $(K(x_i, x_j) )_{i , j }$ must be solved in each iteration.  
In addition to these methods, there have been many recent studies on constructing kernel quadrature rules\ \citep{NEURIPS2019_7012ef03, teymur2020optimal, pronzato2021performance, hayakawa2021positively}.

\subsection{Contributions of this study}
In this study, we focus on the kernel herding method \citep{welling2009herding, 10.5555/3023549.3023562, 10.5555/3042573.3042747}, which can be considered as an infinite-dimensional Frank-Wolfe method \citep{fw56}, a continuous optimization method. This method constructs a stable numerical integration formula and the construction procedure is computationally tractable since linear equations do not need to be solved.  However, the convergence speed for the worst-case integration error is low. Exponential convergence has been confirmed in a finite-dimensional RKHS when the step size $\alpha_i$ is determined by line search \citep{beck2004conditional}; however, theoretically, only  $O(\frac{1}{\sqrt{t}})$ convergence has been guaranteed in an infinite-dimensional RKHS \citep{dunn1980convergence}, where $t$ is the number of nodes of the quadrature formula. 

This study aims to improve the kernel herding algorithm to derive quadrature rules with \emph{sparse} solutions. Sparsity of nodes is one of the most important factors to assess methods of constructing quadrature rules. To achieve our goal, we approach this problem from the perspective of continuous optimization. We note few studies have approached the improvement of kernel herding in terms of continuous optimization.

\begin{description}
  \item[Section $3$]\ \\
To obtain effective kernel quadrature rules with sparse nodes, we improve the vanilla kernel herding method based on the idea of \cite{pmlr-v119-combettes20a}.
We propose two improved versions of the kernel herding algorithm. The fundamental concept common to both methods involves approximating the negative gradient of the objective function, $F(\nu)= \frac{1}{2} \| \mu_K - \nu \|_K ^2$, by several vertex directions.  We theoretically demonstrate that the convergence speed of the worst-case integration error is directly influenced by $\cos \theta_i$, where $\theta_i$ is the angle between the negative gradient of $F(\nu)$ and $i$-th approximate descent direction. This indicates that the approximation of the negative gradients is significant for constructing a good numerical integration formula. The difference between the two proposed algorithms lies in their approximation methods. 
The first algorithm approximates the negative gradients by positive matching pursuit \citep{locatello2017greedy}. This is a similar procedure to that of \cite{pmlr-v119-combettes20a}. We guarantee the convergence of positive matching pursuit and consider the behavior of $\cos \theta_i$ when $i$ increases. The second algorithm is an improved version of the first algorithm that directly maximizes $\cos \theta_i$, which is a more straightforward approach than the first. We ensure the validity of the maximization method theoretically and provide a convergence guarantee with $O(1/k)$ speed (where $k$ is the number of iterations). In addition, we confirmed through numerical experiments that the proposed algorithms improve the convergence speed of the kernel herding algorithm with respect to the number of nodes and computation time. 

In addition, we propose fully-corrective variants of the proposed methods, where ``fully-corrective'' refers to optimization of the coefficients of the linear combinations of vertex directions. Fully-corrective methods can be realized by solving quadratic optimization. The fully-corrective variants are computationally effective compared to the ordinary fully-corrective kernel herding algorithm.  In the numerical experiments, these algorithms achieved convergence speeds competitive with those of the optimal rates and we confirmed their computational efficiency. The details can be referred to Remark~\ref{remark_difference}.

\item[Section $4$]\ \\
In Section $4$, we focus attention on the great improvement of sparsity by the fully-corrective method from the experiments in Section $3$ and analyze theoretical aspects of fully-corrective kernel quadrature rules. We give a new convergence analysis of the fully-corrective kernel quadrature rules and show the relationship to kernel interpolation; in detail, a fully-corrective kernel quadrature rule with nodes $X= \{x_1, \ldots, x_n  \} \subset \Omega$ achieves a convergence rate at least as fast as the square root of the convergence speed of kernel interpolation with interpolation nodes $X$. While the analysis in previous research can only derive $O(\frac{1}{\sqrt{t}})$ convergence, this analysis can achieve faster convergence rates than those in many cases.
\end{description}

The contributions of this work can be summarized as follows: 
\begin{itemize}
\item We consider improved kernel herding algorithms whose fundamental idea is approximating the negative gradient by several vertex directions. We demonstrate that the convergence speed of the worst-case integration error is directly influenced by $\cos \theta_i$. We propose two algorithms for the approximation of negative gradients. 
In particular, the second novel algorithm directly maximizes $\cos \theta_i$. We provide a theoretical analysis of both algorithms, including convergence analysis.  
\item Through numerical experiments, we confirm that the convergence speed of the proposed algorithms is higher than that of ordinary kernel herding methods with respect to the number of nodes and computation time. Moreover, the fully-corrective variants show significant performance, achieving convergence speeds competitive to optimal rates. 

\item  In Section $4$,  we show the new convergence speed of the kernel quadrature with fully-corrective weights. The convergence speed is beyond the square root rate if the kernel function is sufficiently smooth. Kernel-specific analysis was realized using the theory of kernel interpolation. Although we have not analyzed the algorithm directly, the results give a partial theoretical explanation for the significant performance of the fully-corrective algorithm. 
\end{itemize}

\section{Mathematical background}
\subsection{Problem setting of kernel quadrature}
In this section, we introduce the problem setting of this study. Let $\Omega$ be a subset in $\zissu^d$ and $K : \Omega \times \Omega \to \zissu$ be a positive definite kernel.  We assume the continuity of $K$ and compactness of $\Omega$ in this paper. The function space $\mathcal{H}_{K} (\Omega)$ is the RKHS induced by $K$. For $f_1, f_2 \in \mathcal{H}_K$, we denote the inner product of $f_1$ and $f_2$ by $\left<f_1, f_2\right>_K $ and the norm of $f_1$ by $\| f_1 \|_{K}$. For a Borel probability measure $\mu$ defined on $\Omega$ and $f\in \mathcal{H}_{K} (\Omega)$, we aim to approximate $\int_{\Omega}  f(x)\mu(\mathrm{d}x) $ by a  numerical integration formula $Q_{n}(f) \coloneqq \sum_{i=1}^n \omega_i f(x_i)$, where $X_n=\{ x_i \}_{i=1}^n \subset \Omega$ is a set of nodes and $\{ \omega_i \}_{i=1}^n \subset \zissu$ is a set of weights. We denote the embeddings of the probability measures $\mu$ and discrete measure $\sum_{i=1}^n \omega_i \delta_{x_i}$ to $ \mathcal{H}_{K} $ by
\begin{equation}\label{eq-embed}
\mu_K \coloneqq \int_{\Omega} K(x, \cdot) \mu(\mathrm{d}x)
\end{equation}
 and  $ \nu_K \coloneqq \sum_{i=1}^n \omega_i K(x_i, \cdot)$, respectively.
We evaluate the numerical integration formula, $Q_n$, using the following worst-case error: 
\[ \mbox{(the worst-case error)}\coloneqq
 \sup_{\| f \|_{K} \leq 1 } \left| \int_{\Omega}  f(x)  \mu(\mathrm{d}x)  -  \sum_{i=1}^n \omega_i f(x_i) \right| 
 =\left\| \mu_K - \sum_{i=1}^n \omega_i K(x_i, \cdot) \right\|_K . \]
Note that the second equality can be derived from the Cauchy-Schwarz  inequality. It can be observed that the worst-case error is the distance between $\mu$ and $\sum_{i=1}^n \omega_i \delta_{x_i}$  measured in $\mathcal{H}_{K} $. This distance is called the MMD \citep{gretton2012kernel}. We call the described procedure kernel quadrature.
We note that for fixed nodes $X_n$, it is straightforward to compute the optimized weights $\omega_1, \ldots, \omega_n$ because the squared MMD $\| \mu_K - \sum_{i=1}^n \omega_i K(x_i, \cdot) \|_K ^2$ is a quadratic function of $\omega_1, \ldots, \omega_n$. The optimized weights  can be written as $(\omega_1, \ldots, \omega_n)^\top={ K_{X_n}} ^{-1} z_{X_n}$, and the quadrature rule for $f \in  \mathcal{H}_K$ is
\begin{equation}\notag
f_{X_n} ^\top K_{X_n} ^{-1} z_{X_n}, 
\end{equation}
where  $K_{X_n} = (K(x_i, x_j))_{1\leq i, j \leq n}, z_{X_n} = (\mu_K (x_1), \ldots, \mu_K(x_n))^\top$, and $f_{X_n}= (f(x_1), \ldots, f(x_n) )^\top$. In addition, we can calculate the worst-case error as follows:
 \begin{equation}\notag
 \sup_{\| f \|_{K} \leq 1 } \left| \int_{\Omega}  f(x)  \mu(\mathrm{d}x)  -  \sum_{i=1}^n \omega_i f(x_i) \right| = \int \int K(x, y) \mu(\mathrm{d}x) \mu(\mathrm{d}y) -  z_{X_n} ^{\top} K_{X_n}^{-1} z_{X_n}. 
 \end{equation}
 
  These optimized weights $\{\omega_i\}_{i=1}^n$ are often used and important when considering the relationships to other research fields that we mention later. 

 \subsection{Kernel herding}
Kernel herding \citep{welling2009herding, 10.5555/3023549.3023562, 10.5555/3042573.3042747} is a commonly used method for kernel quadrature. This method constructs a quadrature rule by solving the optimization problems in an RKHS. In the following, we introduce kernel herding and describe its algorithm in detail. 
 
First, we introduce some notations. For a set $S$, we denote the convex hull and conical hull of $S$ by $\mathrm{conv}(S)$ and $\mathrm{cone}(S)$, respectively. That is, 
\[ \mathrm{conv}(S) = \left\{ \sum_{i=1}^k  c_i  s_i \middle|\  c_1, \ldots c_k \geq 0, \sum_{i=1}^k c_i =1, s_1, \ldots, s_k \in S, k \geq1 \right\} \] and 
\[\mathrm{cone}(S) = \left\{ \sum_{i=1}^k  c_i  s_i \middle|\  c_1, \ldots c_k \geq 0, s_1, \ldots, s_k \in S, k \geq1 \right\}.\]
In addition, for an element $a$ and set $S$, we define $S - a$ as
\[ S - a \coloneqq \{ s - a \mid s\in S  \}.  \]

Let $V \coloneqq \{ K(x, \cdot) \mid x \in \Omega \}$ and $M \coloneqq \overline{\mathrm{conv}(V)}$, where the closure is taken with respect to $\|\cdot \|_K$.

Here, we assume that $\mu_K$ defined in (\ref{eq-embed}), which is the embedding of a Borel probability measure $\mu$, belongs to $M$. This assumption is very weak. Lemma~\ref{proof of lemma} in \autoref{Proofs} shows that $\mu_{K} \in M$ under the assumption of the continuity of $K$ and compactness of $\Omega$.

Let $F(\nu) \coloneqq  \frac{1}{2}  \| \mu_K - \nu  \|_K ^2$. The problem setting  of kernel quadrature is rewritten as the minimization problem of $F$ for the embeddings of discrete measures to $\mathcal{H}_K$. By simple calculation, we can derive the Fr\'{e}chet derivative $\nabla_{\nu} F (\nu)  = \nu - \mu_K $, which is computed with respect to the metric of $\mathcal{H}_K$.
To minimize $F(\nu)$, the following gradient descent method can be considered:
$\nu_{t+1} \coloneqq \nu_t - \gamma \nabla_{\nu} F(\nu_t) = \nu_t + \gamma (\mu_K - \nu_t)\quad(\gamma > 0).$
However, because the descent direction $\mu_K - \nu$ is not necessarily of the form $\sum_{i=1}^n c_i K(x_i, \cdot)$, gradient descent is not suitable for deriving a numerical integration formula. It is appropriate if we can select a descent direction of the form 
\begin{equation}\label{eq2-1}
 K(z_i, \cdot) - \nu_t \ (z_i \in \Omega)
 \end{equation}
in some manner. Kernel herding is a typical method based on this principle.
In the kernel herding algorithm, to search a descent direction of the form of (\ref{eq2-1}), we consider the maximization of
 \begin{equation}\label{eq2-2}
 \left< \mu_K - \nu_t , v - \nu_t \right>_K 
 \end{equation}
 subject to $v \in V = \{ K(z, \cdot) \mid z \in \Omega \}$.
 
 The algorithm is given in \autoref{algo1}. In each iteration, a point $v_{t+1} \in V$ is selected to maximize the inner product (\ref{eq2-2}); it moves from $\nu_t$ in the direction of $v_{t+1}$ with  step size $\alpha_t$. The step size $\alpha_t$ is usually determined by $\alpha_t = \frac{1}{t+1}$ or line search: $\alpha_t = \argmin_{0 \leq \alpha \leq1} \| (1-\alpha)\nu_t + \alpha  v_{t+1}\|_K $. Eventually, the output $\nu_n$ has the form $\sum_{i=1}^n \omega_i K(x_i, \cdot)$, which corresponds to the numerical integration formula $\sum_{i=1}^n \omega_i f(x_i)$.
 
    \begin{algorithm}[H] 
 \caption{Kernel herding}
 \label{algo1}
 \begin{algorithmic}[1]
 \renewcommand{\algorithmicrequire}{\textbf{Input:}}
 \renewcommand{\algorithmicensure}{\textbf{Output:}}
\STATE select an initial point $\nu_1  \in V$
  \FOR {$t = 1$ to $n-1$}
  \STATE $v_{t+1}= \argmax_{\nu\in V} \left<\mu_K - \nu_t, v - \nu_t \right>_K$
    \STATE determine the step size $0 < \alpha_t \leq 1$
  \STATE $\nu_{t+1} = (1 - \alpha_t)\nu_t + \alpha_t v_{t+1}$
  \ENDFOR
 \RETURN $\nu_n$
 \end{algorithmic} 
 \end{algorithm}
  
 \begin{remark}
 When $\nu_t (x)  = \sum_{j=1}^n \omega_i K(x, x_j)$ and $v(x)= K(x, z_i)$, the inner product in (\ref{eq2-2}) is given in the following form:
 \begin{align*}
  \left< \mu_K - \nu_t , v - \nu_t \right>_K &=  \int_{\Omega} K(z_i , y) \mu(\mathrm{d}y) - \sum_{j=1}^n \omega_j K(z_i, x_j)  \\
  &\ -\sum_{j=1}^n \omega_j  \int_{\Omega} K(x_j, y) \mu(\mathrm{d}y) + \sum_{i=1}^n \sum_{j=1}^n \omega_i \omega_j K(x_i, x_j).
  \end{align*}
 Therefore, this value must be maximized with respect to $z_i \in \Omega$. Because this maximization problem is not convex, we prepare several candidate points and select the maximum point from them. 
 \end{remark}

Kernel herding outputs a stable quadrature rule with $\sum_{i=1}^n \omega_i =1$ and $\omega_i \geq 0\ (i=1, \ldots, n)$. Stability of the quadrature rule corresponds to boundedness of the $\ell_1$ norm of $(\omega_1, \ldots, \omega_n)^\top$ 
\[ \sum_{i=1}^n |  \omega_i | < C , \] 
where $C$ is a positive constant. This is an advantage because it is known that some construction methods of kernel quadrature result in an unstable quadrature formula \citep{oettershagen2017construction}; moreover, there are not many methods of kernel quadrature whose numerical stability is guaranteed theoretically. In addition, we do not need to solve the linear equation with coefficient matrix $(K(x_i, x_j))_{1\leq i, j\leq t}$ in each iteration.

\subsection{Frank-Wolfe algorithm and its variants}
The kernel herding algorithm can be considered as an infinite-dimensional Frank-Wolfe algorithm \citep{fw56}, which is a convex optimization method in Euclidean space. In this section, we describe the Frank-Wolfe algorithm and its variants. 
\subsubsection{Frank-Wolfe algorithm}
The Frank-Wolfe algorithm \citep{fw56} (a.k.a. Conditional Gradients \citep{polyak66cg} ) is an important class of first-order methods for constrained convex minimization, i.e., solving 
$$\min_{x \in C} f(x),$$ 
where $C \subset \zissu^d$ is a compact convex feasible region. We denote the vertices of $C$ by $V_C$, which means $C = \mathrm{conv}(V_C)$. The algorithm is shown in \autoref{Frank-Wolfe algorithm}. These methods usually form their iterates as convex combinations of feasible points, and as such, they do not require (potentially expensive) projections onto the feasible region $C$. Moreover, the access to the feasible region is solely realized by means of a so-called \emph{linear minimization oracle (LMO)}, which  returns $\arg\min_{x \in P} c^\intercal x$ upon presentation with a linear function $c$. Another significant advantage is that the iterates are typically formed as \emph{sparse} convex combinations of extremal points of the feasible region (sometimes also called \emph{atoms}), which makes this class of optimization algorithms particularly appealing for problems such as sparse signal recovery, structured regression, SVM training, and kernel herding.

\begin{algorithm}[H]
 \caption{Frank-Wolfe}
 \label{Frank-Wolfe algorithm}
 \begin{algorithmic}[1]
 \REQUIRE initial vertex $\xi_1 \in  V_C$,  set of vertices $S_1 = \{ \xi_1 \}$
   \FOR {$t = 1$ to $n-1$}
  \STATE $v_{t+1} = \argmax_{v\in  V_C} \left< -\nabla f(\xi_t), v - \xi_t \right>$   
    \STATE determine the step size $0 < \alpha_t \leq 1$ 
  \STATE $\xi_{t+1} = (1 - \alpha_t)\xi_t + \alpha_t  v_{t+1}$
  \STATE $S_{t+1}= S_t \cup \{ v_{t+1} \}$
  \ENDFOR
 \RETURN $\xi_n$ 
 \end{algorithmic} 
 \end{algorithm}

\subsubsection{Variants of the Frank-Wolfe algorithm}
There have been various studies on the modification of the vanilla Frank-Wolfe algorithm that aim to achieve faster convergence speed or sparser solutions. One of the famous variants is the away-step Frank-Wolfe method \citep{wolfe70}. In this algorithm, we choose the direction from the ordinary direction $v_{t+1} - \xi_{t}$ and   away-direction $\xi_t - a_{t+1}$, where $a_{t+1} = \argmin_{v\in S_t} \langle - \nabla f(\xi_t), v - \xi_t \rangle$, and we move in the direction in each step. It has been shown that this modification improves the convergence speed and sparsity of solutions. The pairwise Frank-Wolfe algorithm \citep{lacoste15} is another famous example. In this algorithm, the direction $v_{t+1} - a_{t+1}$ is used instead of $v_{t+1} - \xi_{t+1}$.

The fully-corrective Frank-Wolfe algorithm (e.g., \cite{holloway1974extension, jaggi13fw}) is another modified Frank-Wolfe algorithm. It executes the optimization of the point $\xi_t$ over the convex hull  of the active atoms in each iteration. The algorithm is \autoref{fc-cg}.  We note that  the optimization problem on line \ref{opt-phase} of \autoref{fc-cg}  is a convex optimization problem because $f$ is a convex function. Although it is somewhat computationally expensive when the size of $S_t$ increases, we can expect sparse solutions. We note that the nice sparsity of the solutions of the fully-corrective kernel herding algorithm was confirmed experimentally in previous studies \citep{10.5555/3042573.3042747,pmlr-v38-lacoste-julien15}.

\begin{algorithm}[H]
 \caption{fully-corrective Frank-Wolfe method}
 \label{fc-cg}
 \begin{algorithmic}[1]
 \REQUIRE initial vertex $v_1=\xi_1 \in  V$,  set of vertices $S_1 = \{ \xi_1 \}$
   \FOR {$t = 1$ to $n-1$}
  \STATE $v_{t+1} = \argmax_{v\in  V_C} \left< -\nabla f(\xi_t), v - \xi_t \right>$   
    \STATE determine the step size $0 < \alpha_t \leq 1$ 
  \STATE $\xi_{t+1} = (1 - \alpha_t)\xi_t + \alpha_t  v_{t+1}$
  \STATE $S_{t+1}= S_t \cup \{ v_{t+1} \}$
  \STATE $ w(v_{1}), \ldots, w(v_{t+1}) \leftarrow \argmin_{w_{v_1}, \ldots, w_{v_{t+1} \in \zissu}} f(\sum_{i=1}^{t+1} w_{v_i} v_i)$, s.t. $\sum_{i=1}^{t+1} w_{v_i} =1, w_{v_i} \geq 0\ (i=1, \ldots t+1)$  \label{opt-phase}
  \STATE $\xi_{t+1}=\sum_{i=1}^{t+1} w(v_i) v_{i}$
  \ENDFOR 
 \RETURN $\xi_n$ 
 \end{algorithmic} 
 \end{algorithm}

\section{Acceleration of kernel herding by approximating negative gradients}\label{chapter3}

 Although the kernel herding algorithm  (\autoref{algo1}) outputs stable quadrature rules and is computationally tractable, its convergence speed of integration errors is slower than that of other effective methods. This has been confirmed experimentally, for example, in \cite{10.5555/3042573.3042747, 10.5555/3020652.3020694}. The cause of this problem is considered to be the zig-zagging trajectory (\autoref{zigzag}), which is often featured as the cause of slow convergence of the vanilla Frank-Wolfe algorithm.  This zig-zagging trajectory arises in the Frank-Wolfe method when a point $\xi_t$ moves towards the vertex in each iteration and the direction does not approximate the negative gradient $-\nabla f(\xi_t)$ efficiently in some cases. \autoref{decrease_cos} suggests that a similar phenomenon occurs in the kernel herding case.  It becomes difficult to approximate the negative gradient $\mu_K - \nu_t$ using only one direction $v_{t+1} - \nu_t$ as $t$ increases. \autoref{decrease_cos}  shows the decrease of $\cos \theta_t$ as $t$ increases, where $\theta_t$ is the angle between $\mu_K - \nu_t$ and $v_{t+1} - \nu_t$. It can be observed from \autoref{decrease_cos} that the approximation of the negative gradient $\mu_K - \nu_t$ by $v_{t+1} - \nu_t$ is insufficient.

To overcome this, several methods have been examined, such as the away-step method and pairwise method, as mentioned above. In this study, we focused on \cite{pmlr-v119-combettes20a}. The main idea of \cite{pmlr-v119-combettes20a} is  approximating the negative gradient $- \nabla f(\xi_t)$, by a linear combination of vertex directions with positive coefficients, that is,  solving  $\min_{d\in \mathrm{cone}( V - \xi_t ) }\| -\nabla f(\xi_t) -d \|^2$ in each iteration. \autoref{combet} is an example; $-\nabla f(\xi_t)$ is approximated by $d_2$, which is the linear combination of the two vertex directions. This helps avoid zig-zagging trajectories. For the constrained convex optimization problems in Euclidean space, the proposed algorithms improve convergence speeds with respect to the number of iterations and computation time.

  \begin{figure}[h]
    \begin{minipage}[t]{0.45\hsize}
        \center
        \captionsetup{width=.95\linewidth}
        \includegraphics[width=\textwidth]{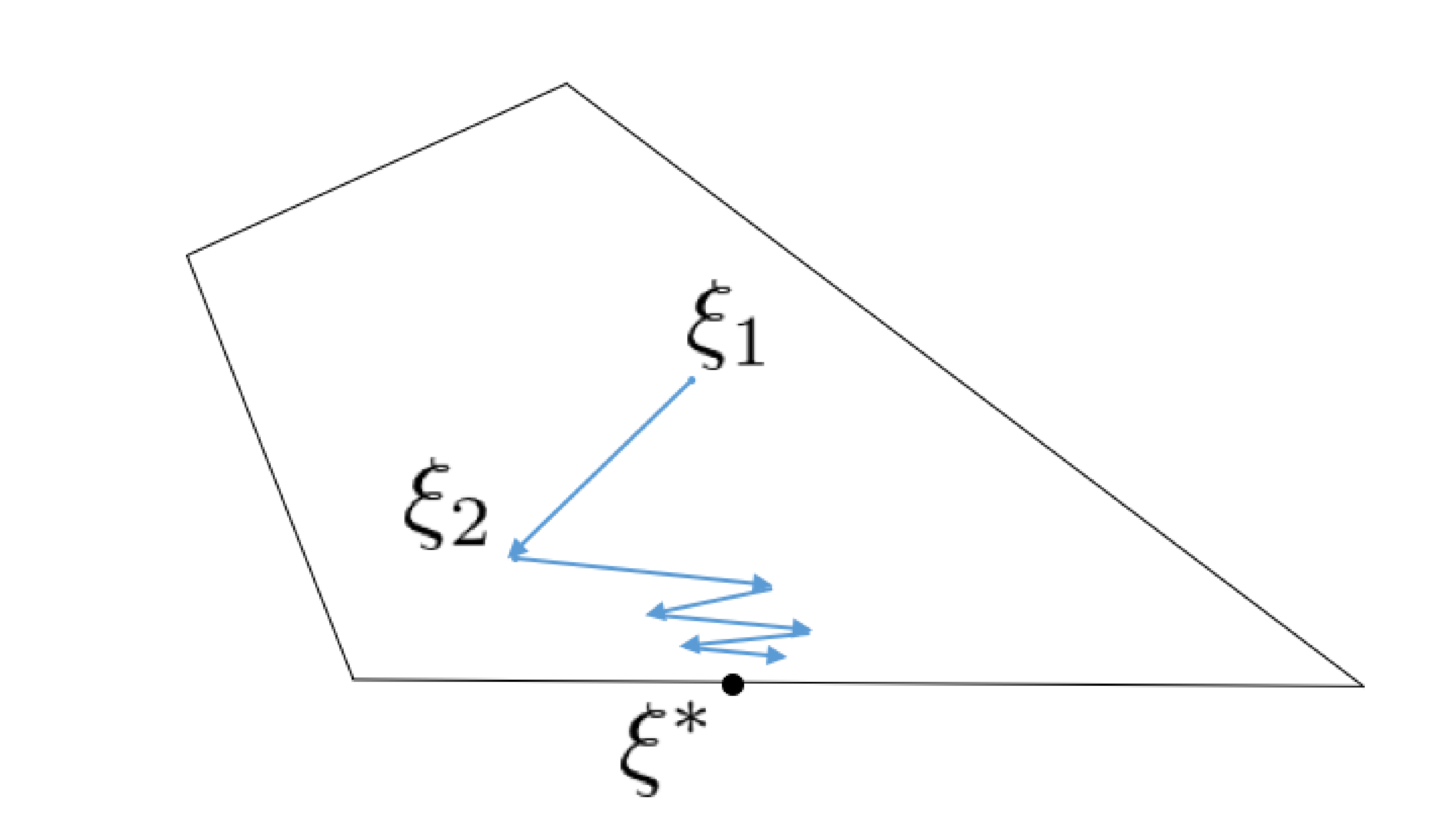}
        \caption{Zig-zagging trajectory in the Frank-Wolfe method}
        \label{zigzag}
    \end{minipage}
    \begin{minipage}[t]{0.45\hsize}
        \center
        \captionsetup{width=.95\linewidth}
        \includegraphics[width=\textwidth]{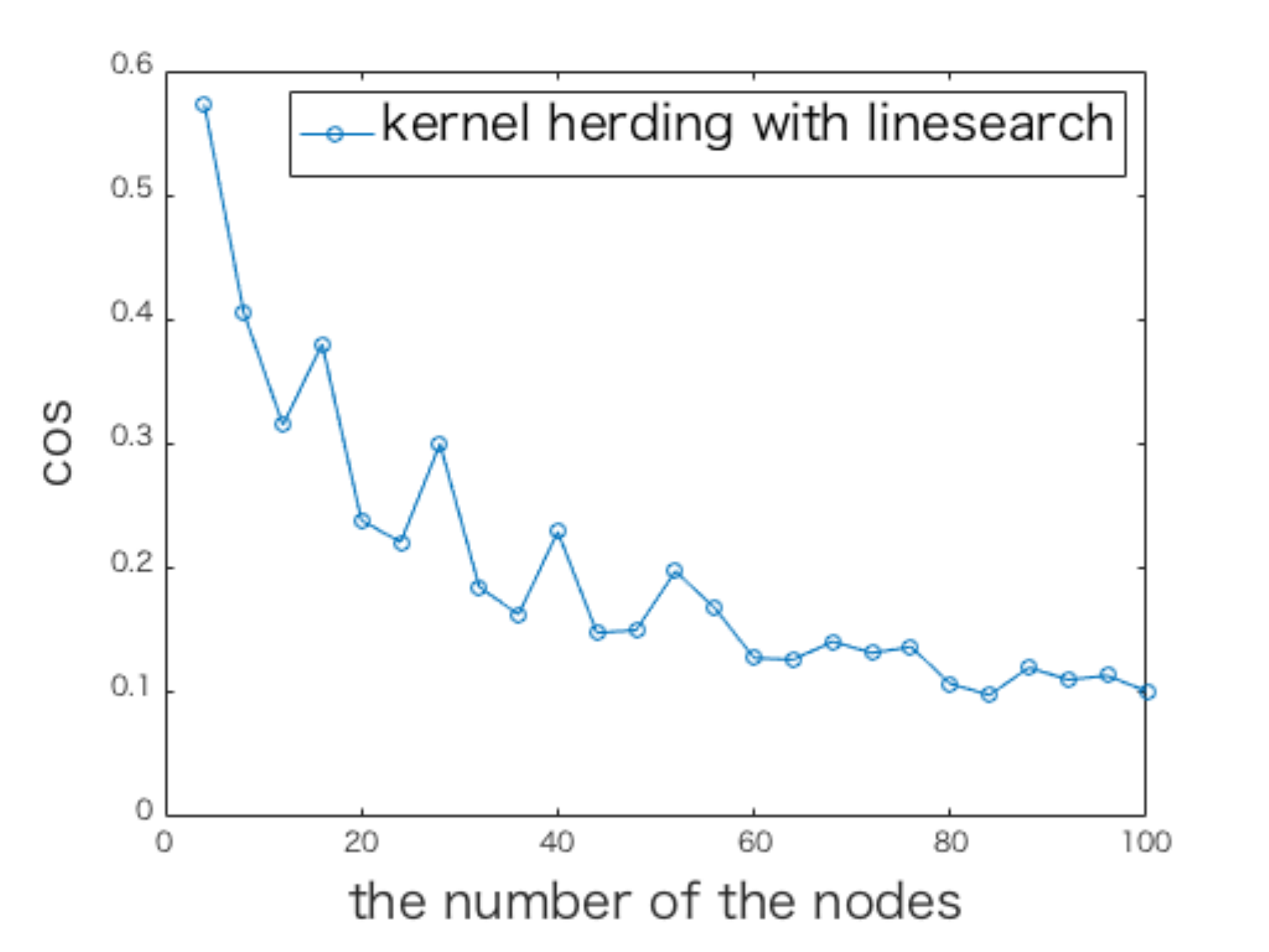}
        \caption{Decrease in $\cos$ between $\nu_t - \mu_K$ and $v_{t+1} - \nu_t$}
        \label{decrease_cos}
         \end{minipage}  
      \begin{minipage}[t]{0.45\hsize}
  \begin{center} 
    \captionsetup{width=.95\linewidth}
        \includegraphics[width=\textwidth]{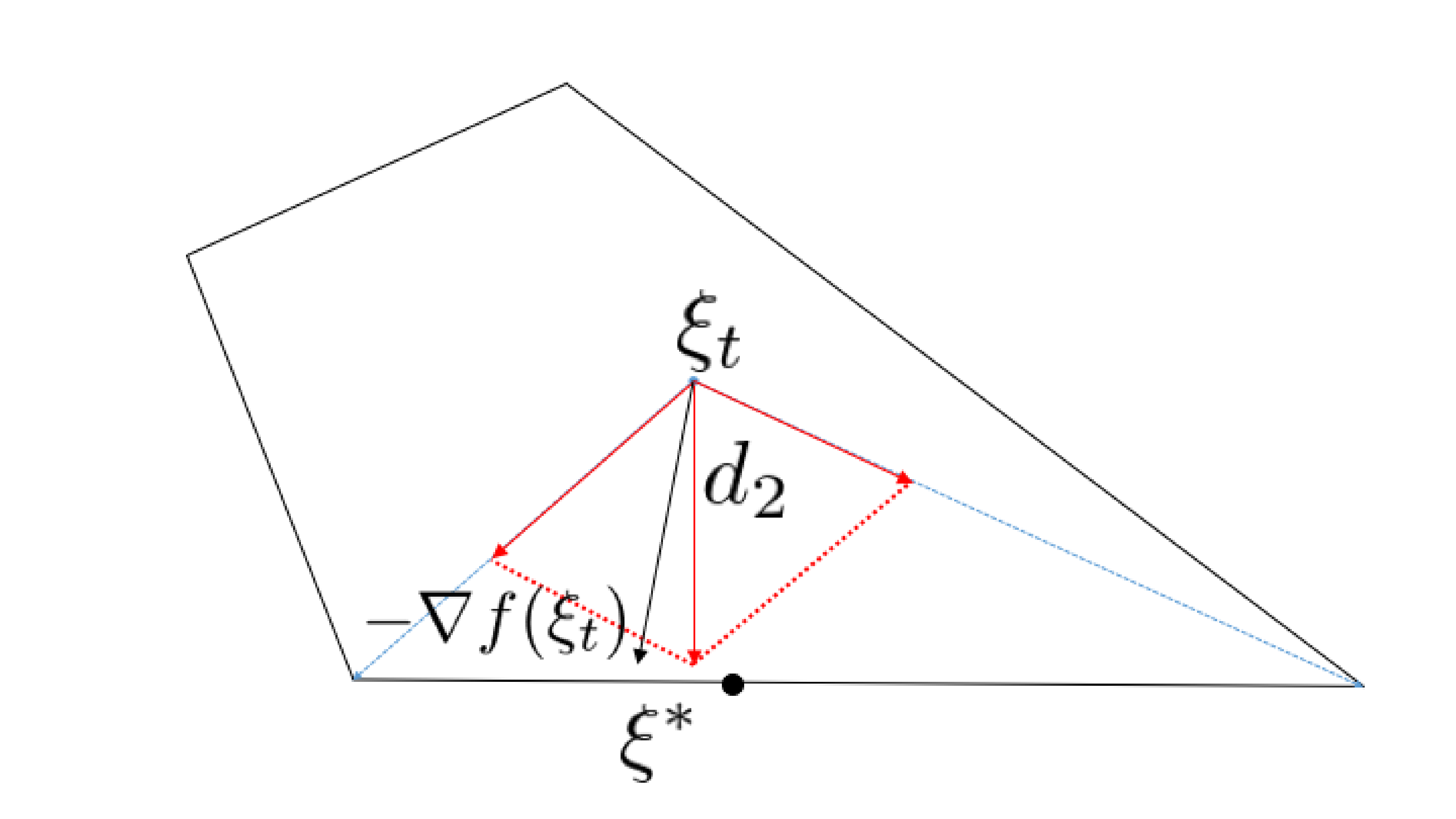}
    \caption{Gradient approximation idea proposed in \cite{pmlr-v119-combettes20a}} 
    \label{combet} 
  \end{center}
   \end{minipage}
\end{figure}

\subsection{New kernel herding algorithms}\label{New algorithms of kernel herding}

In this section, we present the improved version of the kernel herding algorithm, given in \autoref{boost herding}. The algorithm approximates the negative gradient $-\nabla_\nu F(\nu_t) =  \mu_K - \nu_t$ by $g_t$, which is a component of $\mathrm{cone}(V - \nu_t)$, and updates $\nu_t$ with $\nu_{t+1} = \nu_t + \gamma_t g_t$. This concept has already been used to improve the Frank-Wolfe method in \cite{pmlr-v119-combettes20a}. The aim of this study was to derive quadrature rules with \emph{sparse} nodes by \autoref{boost herding}.
This is the main difference from \cite{pmlr-v119-combettes20a}, which did not consider the sparsity of solutions. 
To derive sparse solutions, the algorithm for the approximation of negative gradients (line  \autoref{approximation_of_ng} of \autoref{boost herding}) is important. In this study, we use the approximation algorithms  \autoref{positive matching pursuit} or \autoref{greedy cos}. In \autoref{subs3-1}, we discuss \autoref{boost herding}, which uses \autoref{positive matching pursuit}. The algorithm is a natural extension of the improved Frank-Wolfe method proposed in \cite{pmlr-v119-combettes20a}.  In \autoref{subs3-2}, we improve the algorithm of \autoref{subs3-1}. This algorithm mainly involves the maximization of $\cos \theta_t$, where $\theta_t$ is the angle between the negative gradient $-\nabla_\nu F(\nu_t)$ and approximate gradient.

\begin{algorithm}[H]
 \caption{Accelerated kernel herding algorithm}
 \label{boost herding}
 \begin{algorithmic}[1]
 \renewcommand{\algorithmicrequire}{\textbf{Input:} }
 \renewcommand{\algorithmicensure}{\textbf{Output:}} 
\REQUIRE   Input point $\nu_1 \in V$
\ENSURE   $\nu_{T} \in M$

  \FOR {$t = 1$ to $T-1$}
   \STATE  derive the approximate gradient $g_t \in \mathrm{cone}(V - \nu_t)$ using \autoref{positive matching pursuit} or \autoref{greedy cos} \label{approximation_of_ng}
   \STATE determine the step size $\gamma_t \in (0, 1)$ by line search
   \STATE $\nu_{t+1} \leftarrow \nu_t + \gamma_t g_t$
   \ENDFOR
 \RETURN $\nu_T$ 
 \end{algorithmic} 
 \end{algorithm}
 
 \begin{algorithm}[H]
 \caption{Positive matching pursuit}
 \label{positive matching pursuit}
 \begin{algorithmic}[1]
 \renewcommand{\algorithmicrequire}{\textbf{Input:}  }
 \renewcommand{\algorithmicensure}{\textbf{Output:}} 
\REQUIRE   Input $\nu_t \in M$, $d_0 =0$, maximum number of rounds $K_{\max} \in \sizen$, truncation parameter $\delta < 1$, and $\Lambda_t=0$.

\ENSURE    approximate direction $g_t$

    \FOR {$k=0 $ to $K_{\max}-1$}
  \STATE $r_k \leftarrow  -(\nu_t - \mu_K)- d_k$
  \STATE $v_k \leftarrow \argmax_{v\in V} \left< r_k, v \right>_K$
  \STATE $u_k \leftarrow \argmax_{u\in \{ v_k - \nu_t, -d_k / \| d_k \|_K \}  } \left< r_k, u \right>_K$\ (if $k=0$, $\left< r_k, -d_k / \| d_k \|_K\right>_K \coloneqq -\infty$) \label{argmax_u}
  \STATE $ d_k ^{'} = d_k + \frac{\left< r_k, u_k \right>_K}{\| u_k \|_K ^2} u_k $\ ($\lambda_k \coloneqq \frac{\left< r_k, u_k \right>_K}{\| u_k \|_K ^2} $)
  \IF{$\mathrm{align}(-(\nu_t -\mu_K), d_k ^{'}) - \mathrm{align}(-(\nu_t -\mu_K), d_k ) > \delta$ } \label{compare align}
 \STATE $d_{k+1} \leftarrow d_k ^{'}$
 \STATE $\Lambda_t \leftarrow $\\
 $
             \begin{cases} 
             \Lambda_t +\lambda_k  \quad(  \mbox{if}\ u_k = v_k - \nu_t) \\
             \Lambda_t ( 1  -\lambda_k/\| d_k \| ) \quad( \mbox{if}\ u_k = \frac{-d_k} { \| d_k \|_K})
             \end{cases}$
\ELSE 
\STATE  \textbf{break}
  \ENDIF \label{line11}
  \ENDFOR
  \STATE $K_t \leftarrow k$
    \RETURN $g_t \coloneqq d_k / \Lambda_t$ 
 \end{algorithmic} 
 \end{algorithm}

\begin{algorithm}[H]
 \caption{Greedy maximization of $\cos \theta_t$}
 \label{greedy cos}
 \begin{algorithmic}[1]
 \renewcommand{\algorithmicrequire}{\textbf{Input:}  }
 \renewcommand{\algorithmicensure}{\textbf{Output:}} 
\REQUIRE   Input $\nu_t \in M$, maximum number of rounds $K_{\max} \in \sizen$, and truncation parameter $\delta \geq 0$. 

\ENSURE    approximate direction $g_t$
\STATE $d_1 \leftarrow \argmax_{v\in V-\nu_t} \frac{ \left< -(\nu_t - \mu_K),   v \right>_K}{\| v \|_K \| \nu_t - \mu_K \|_K}$
\STATE $\Lambda_t \leftarrow 1$
    \FOR {$k=1 $ to $K_{\max}-1$}
  \STATE $c_k, v_k \leftarrow  $\\$\argmax_{c \geq 0, v\in V - \nu_t} \frac{\left< -( \nu_t - \mu_K), d_k + c v \right>_K}{\| \mu_K -\nu_t \|_K \| d_k +c v  \|_K}$ \label{line1}
  \STATE $ d_k ^{'} = d_k + c_k v_k$
  \IF{$\mathrm{align}(-(\nu_t -\mu_K), d_k ^{'}) - \mathrm{align}(-(\nu_t -\mu_K), d_k ) > \delta$ }
 \STATE $d_{k+1} \leftarrow d_k ^{'}$
 \STATE $\Lambda_t \leftarrow \Lambda_t + c_k $
\ELSE 
\STATE  \textbf{break}
  \ENDIF
  \ENDFOR
  \STATE $K_t \leftarrow k$
    \RETURN $g_t \coloneqq d_k / \Lambda_t$ 
 \end{algorithmic} 
 \end{algorithm}

\subsubsection{Positive matching pursuit}\label{subs3-1}
 We define $\mathrm{align}(f_1, f_2)$ for $f_1, f_2 \in \mathcal{H}_K (\Omega)$ as follows: for $f_1, f_2 \neq 0$, $\mathrm{align}(f_1, f_2) \coloneqq  \frac{\left< f_1, f_2\right>_K}{\| f_1 \|_K \| f_2 \|_K}$; otherwise, $\mathrm{align}(f_1, f_2)= -1$. 
 
In the following, we discuss \autoref{boost herding}, which uses positive matching pursuit (\autoref{positive matching pursuit}). The positive matching pursuit algorithm was introduced in \cite{locatello2017greedy}. This can be used as  a greedy approximation algorithm for an element in an inner product space. The target is approximated by a positive linear combination of elements in a cone. On line \autoref{argmax_u}, the backward direction $-d_k/\| d_k \|_K$ may sometimes be selected as a modifier of the approximate direction.  In addition, on line \autoref{compare align}, we check whether the updated direction increases $\cos \theta_t$, where $\theta_t$ is the angle between the approximate direction and $-\nabla_{\nu} F(\nu_t)$. If the increase is smaller than $\delta$, the iteration is stopped. In general, $\cos \theta_t$ does not increase monotonically. Therefore, $\delta$ can take negative values.

In \cite{pmlr-v119-combettes20a}, $K_{\max}$ was taken as a significantly large number, and only the convergence speed with respect to $T$ was  considered. However, in this study, the total number of iterations must be managed, including $\sum_{i=1}^T K_i$, which is an upper bound of the total number of nodes generated by kernel herding. This is the main difference between this work and \cite{pmlr-v119-combettes20a}. If $K_{\max}$ is too large, the negative gradient $-\nabla_\nu F(\nu_t)$ is approximated well, but the convergence speed for a given number of nodes may be slow. Conversely, if $K_{\max}$ is too small, $g_t$ cannot approximate $-\nabla_\nu F(\nu_t)$ sufficiently; thus, the convergence speed may not differ from that of ordinary kernel herding. Therefore,  $K_{\max}$ should be selected carefully.

In this subsection, we theoretically analyze \autoref{boost herding} that implements \autoref{positive matching pursuit}. Some results from \cite{pmlr-v119-combettes20a} can be referred to; however, \cite{pmlr-v119-combettes20a} only considers the Frank-Wolfe method in Euclidean space. 

Let us introduce some notations. For $a, b \in \seisu$, we denote  $\{ i \in \seisu \mid  a\leq i \leq b\}$ by $ \llbracket a,  b\rrbracket$. In addition, we define 
 $\epsilon_t \coloneqq  \frac{1}{2} \| \mu_K - \nu_t \|_K ^2$. This is the square of the worst-case error at the $t$-th iteration.

First, we confirm that the algorithm works as required following a similar argument to that of Proposition 3.1 in \cite{pmlr-v119-combettes20a}. 
 \begin{proposition}\label{prop1}
 Let $t \in \llbracket 1,   T-1\rrbracket$ and $\nu_t \in M$. Then, \\
 (i)\  $d_1$  is well-defined and  $K_t \geq 1$, \quad (ii)\ $\lambda_0, \ldots, \lambda_{K_t -1} \geq 0$,\\
(iii)\ for $k\in \llbracket 0, K_t \rrbracket, d_k \in \mathrm{cone}(M - \nu_t )$, \quad (iv)\ $\nu_t +g_t \in M \ \mathrm{ and }\  \nu_{t+1} \in M$,\\
and (v)\ $\mathrm{align} ( -(\nu_t - \mu_K), g_t ) \geq \mathrm{align} ( -(\nu_t - \mu_K), v_0 -  \nu_t )+ (K_t -1)\delta$, where $v_0 \in \argmax_{v\in M} \left< -(\nu_t - \mu_K), v \right>$. In addition, $ \mathrm{align}( -(\nu_t - \mu_K), v_0 - \nu_t ) \geq 0$.
 \end{proposition}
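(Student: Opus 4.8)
The plan is to verify each of the five assertions in turn, tracing through the logic of \autoref{positive matching pursuit} and exploiting the fact that $\nu_t \in M = \overline{\mathrm{conv}(V)}$. For (i), I would observe that $d_1$ is obtained by maximizing $\mathrm{align}(-(\nu_t-\mu_K), v-\nu_t)$ over $v \in V$; this maximization is well-posed because $V$ is compact (continuity of $K$, compactness of $\Omega$) and the objective is continuous away from the degenerate case $v = \nu_t$. Since $\mu_K \in M$ and $\nu_t \in M$, the descent direction $\mu_K - \nu_t$ lies in $\overline{\mathrm{cone}(V - \nu_t)}$, so there exists $v \in V$ with $\langle -(\nu_t - \mu_K), v - \nu_t\rangle_K > 0$ unless $\nu_t = \mu_K$ (in which case the algorithm is trivially at the optimum and $\epsilon_t = 0$); hence $r_0 = -(\nu_t - \mu_K) \neq 0$ forces $v_0$ to give a strictly positive inner product, so the first update is accepted (the align increase from $0$ is positive, exceeding a suitably small/negative $\delta$ — here one uses the convention $\mathrm{align}(f_1,0) = -1$ so that the "before" value is $-1$ and any genuine step beats it), giving $K_t \geq 1$.

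For (ii), I would note that at round $k$ the algorithm picks $u_k$ to maximize $\langle r_k, u\rangle_K$ over $\{v_k - \nu_t, -d_k/\|d_k\|_K\}$, and $v_k$ itself maximizes $\langle r_k, v\rangle_K$ over $V$. The key point is that $\langle r_k, v_k - \nu_t\rangle_K \geq \langle r_k, 0\rangle_K$ would follow if $\nu_t$ were attainable as a convex combination — more precisely, since $\nu_t \in M$, we have $\langle r_k, \nu_t\rangle_K \leq \max_{v \in V}\langle r_k, v\rangle_K = \langle r_k, v_k\rangle_K$, hence $\langle r_k, v_k - \nu_t\rangle_K \geq 0$. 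Therefore $\max_u \langle r_k, u_k\rangle_K \geq 0$, so $\lambda_k = \langle r_k, u_k\rangle_K / \|u_k\|_K^2 \geq 0$. (The $k=0$ convention removes the backward option, and positivity there is immediate from the previous paragraph.)

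For (iii), I would argue by induction on $k$: $d_0 = 0 \in \mathrm{cone}(M - \nu_t)$; if $d_k \in \mathrm{cone}(M - \nu_t)$ then either $d_{k+1} = d_k + \lambda_k(v_k - \nu_t)$ with $\lambda_k \geq 0$ and $v_k \in V \subset M$, which stays in the cone, or $d_{k+1} = d_k - (\lambda_k/\|d_k\|_K)\,d_k = (1 - \lambda_k/\|d_k\|_K)\,d_k$, and here I must check the scalar $1 - \lambda_k/\|d_k\|_K$ is nonnegative — this is exactly where one uses that $u_k = -d_k/\|d_k\|_K$ was chosen over $v_k - \nu_t$ only when it gives the larger inner product, combined with a bound on $\lambda_k = \langle r_k, -d_k/\|d_k\|_K\rangle_K / 1$; since $d_k \in \mathrm{cone}(M-\nu_t)$ one shows $\langle r_k, -d_k\rangle_K \leq \|d_k\|_K^2$ by a Cauchy–Schwarz / feasibility argument, giving $\lambda_k \leq \|d_k\|_K$. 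Then (iv) follows since $\nu_t + g_t = \nu_t + d_{K_t}/\Lambda_t$, and the bookkeeping of $\Lambda_t$ is designed precisely so that $g_t$ expresses $\nu_t + g_t$ as a convex combination of points of $V$ shifted by $\nu_t$ — i.e. $g_t \in \mathrm{conv}(V) - \nu_t = M - \nu_t$ (up to closure), hence $\nu_t + g_t \in M$; and $\nu_{t+1} = \nu_t + \gamma_t g_t = (1-\gamma_t)\nu_t + \gamma_t(\nu_t + g_t) \in M$ by convexity, $\gamma_t \in (0,1)$. Finally for (v), the acceptance test on line~\autoref{compare align} guarantees each accepted round increases $\mathrm{align}(-(\nu_t-\mu_K), d_k)$ by more than $\delta$; telescoping over the $K_t$ accepted rounds from the initial value $\mathrm{align}(-(\nu_t-\mu_K), d_1) = \mathrm{align}(-(\nu_t-\mu_K), v_0-\nu_t)$ (taking $v_0$ as the initial maximizer, noting $d_1$ is a positive scalar multiple of $v_0 - \nu_t$, hence aligned identically) yields the stated bound with the $(K_t - 1)\delta$ term, and $\mathrm{align}(-(\nu_t-\mu_K), v_0 - \nu_t) \geq 0$ because $\langle -(\nu_t-\mu_K), v_0 - \nu_t\rangle_K \geq 0$, as established in (ii).

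The main obstacle I anticipate is (iii)/(iv): carefully justifying that the backward step keeps $d_{k+1}$ in the cone (the inequality $\lambda_k \leq \|d_k\|_K$) and that the $\Lambda_t$ update correctly normalizes $d_{K_t}$ so that $\nu_t + g_t$ is genuinely a convex — not merely conical — combination. This requires unwinding the two cases of the $\Lambda_t$ recursion and matching them against the two cases of the $d_k$ recursion, essentially an invariant of the form "$d_k = \Lambda_t^{(k)} \cdot (\text{something in } M) - (\text{coefficient}) \cdot \nu_t$" maintained throughout; the rest is continuity/compactness for well-definedness and straightforward telescoping.
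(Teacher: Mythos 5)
Your overall architecture matches the paper's proof: (i), (ii), (iv), and (v) are argued essentially as the authors do it (the convention $\mathrm{align}(\cdot,0)=-1$ forcing an increase of at least $1>\delta$ at round $0$, so $K_t\ge 1$; the identity $\langle r_k, v_k-\nu_t\rangle_K=\max_{v\in M}\langle r_k,v\rangle_K-\langle r_k,\nu_t\rangle_K\ge 0$ for (ii) and for the nonnegativity claim in (v); the $\Lambda_t$ bookkeeping plus convexity for (iv); telescoping the acceptance test for (v)).

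The one genuine gap is the step you yourself flag as the main obstacle in (iii): showing $1-\lambda_k/\|d_k\|_K\ge 0$ when the backward direction $u_k=-d_k/\|d_k\|_K$ is chosen. You correctly reduce this to $\langle r_k,-d_k\rangle_K\le\|d_k\|_K^2$, but neither of your proposed justifications closes it. Cauchy--Schwarz gives only $\langle r_k,-d_k\rangle_K\le\|r_k\|_K\|d_k\|_K$, which would require $\|r_k\|_K\le\|d_k\|_K$ --- false in general (e.g.\ while $d_k$ is still small). And membership of $d_k$ in $\mathrm{cone}(M-\nu_t)$ says nothing by itself about the sign of $\langle\mu_K-\nu_t,d_k\rangle_K$, which is exactly what the target inequality amounts to after expanding $r_k=-(\nu_t-\mu_K)-d_k$. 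The missing idea in the paper's proof is the monotone decrease of the residual norm: since $\lambda_{k'}=\langle r_{k'},u_{k'}\rangle_K/\|u_{k'}\|_K^2$ is the exact line-search coefficient along $u_{k'}$, one has $\|r_{k'+1}\|_K^2=\|r_{k'}\|_K^2-\langle r_{k'},u_{k'}\rangle_K^2/\|u_{k'}\|_K^2\le\|r_{k'}\|_K^2$, hence $\|r_k\|_K^2\le\|r_0\|_K^2=\|\mu_K-\nu_t\|_K^2$. Expanding this inequality yields $\langle\nu_t-\mu_K,d_k\rangle_K\le-\|d_k\|_K^2/2$, and therefore $\langle r_k,-d_k\rangle_K=\langle\nu_t-\mu_K,d_k\rangle_K+\|d_k\|_K^2\le\|d_k\|_K^2/2$, which is even stronger than what you need (it gives $1-\lambda_k/\|d_k\|_K\ge 1/2$). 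With that lemma inserted, your argument for (iii), and hence the whole proof, goes through.
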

  \begin{proof}
 (i)\ By definition, $\mathrm{align}(r_0, d_0)= -1$. In addition, because $v_0 \in \argmax_{v\in M} \left< r_0, v \right>_K$ and $\nu_t \in M$, the following holds true: 
 \begin{equation}\label{eq1}
  \left< r_0, v_0 - \nu_t  \right>_K = \left< r_0, v_0 \right>_K -   \left< r_0, \nu_t \right>_K \geq 0 .
  \end{equation}
  Therefore, $u_0 = v_0 - \nu_t$ and $\mathrm{align}(r_0, d_0 ^{'}) - \mathrm{align}(r_0, {d_0}) \geq 1 > \delta$ are valid. Thus, $K_t \geq 1$. \\
  
 (ii)\  Let $k \in  \llbracket 0, K_t -1  \rrbracket$. Because $v_k \in \argmax_{v \in M} \left<  r_k, v \right>_K$ and $\nu_t \in M$, 
 \[ \left< r_k, v_k -\nu_t \right>_K = \max_{v \in M} \left< r_k, v \right>_K -  \left< r_k, \nu_t \right>_K \geq 0 .\]
 On line \ref{argmax_u} of \autoref{positive matching pursuit}, 
 \[ \left< r_k,  u_k\right>_K \geq   \left< r_k, v_k - \nu_t \right>_K \geq 0  .\]
 Thus, $\lambda_k \geq 0$. 
\\
(iii) We prove this by induction: \\
(a)\ $k=0$\\
 $d_0 = 0 = \nu_t -\nu_t \in  \mathrm{cone}(M -  \nu_t )$. \\
 (b)\ We assume there exist $k \in \sizen$ and $d_k \in \mathrm{cone}(M - \nu_t )$. \\
 If $u_k = v_k - \nu_t$, because $u_k \in M -\nu_t$ and $\lambda_k \geq 0$, 
 \[ d_{k+1} = d_k + \lambda_k (v_k -\nu_t)  \in \mathrm{cone}(M - \nu_t ).  \]
Consider $u_k = -\frac{d_k}{\| d_k \|_K}$. Because $d_{k+1} = (1 - \frac{\lambda_k}{\| d_k \|_K} )d_k$, we must prove that $ (1 - \frac{\lambda_k}{\| d_k \|_K} ) \geq 0$. It suffices to show that $ (1 - \frac{\lambda_k}{\| d_k \|_K} ) \geq \frac{1}{2}$. By simple calculation, 
 \begin{align}
 1 - \frac{\lambda_k}{\| d_k \|_K} \geq \frac{1}{2} &\Leftrightarrow \frac{1}{2}\geq \frac{\lambda_k}{\| d_k \|_K} = \frac{\left< r_k, -d_k /\| d_k \|_K \right>_K}{\| d_k \|_K} \notag\\
&\Leftrightarrow \frac{\| d_k \|^2}{2} \geq \left< r_k, -d_k \right> .\notag
 \end{align}
Thus, it suffices to show that $ \frac{\| d_k \|^2}{2} \geq \left< r_k, -d_k \right>$. The following inequality is valid for any $k^{'} \in \sizen$: 
 \begin{align*}
 \| r_{k^{'}+1} \|_K ^2
 &= \| -(\nu_t - \mu_K) -d_{k^{'}+1} \|_K ^2  \\
 &=  \| -(\nu_t - \mu_K) - (d_k^{'} + \lambda_{k^{'}} u_{k^{'} }) \|_K ^2 \\
  &= \| r_{k^{'}} - \lambda_{k^{'}} u_{k^{'}} \|_K ^2  \\
 &= \| r_{k^{'}}  \|_K ^2 -2\lambda_{k^{'}} \left< r_{k^{'}}, u_{k^{'}} \right>_K +\lambda_{k^{'}} ^2 \| u_{k^{'}} \|_K ^2 \\
 &= \| r_{k^{'}} \|_K ^2 -\frac{\left< r_{k^{'}}, u_{k^{'}} \right>_K ^2 }{\| u_{k^{'}} \|_K ^2}\\
 &\leq \| r_{k^{'}} \|_K ^2 .
 \end{align*}
Note that we used $\lambda_{k^{'}} = \frac{\left< r_{k^{'}}, u_{k^{'}} \right>}{\| u_{k^{'}} \|^2}$. Therefore, 
 \begin{equation}\label{eq2}
 \| r_k \|_K ^2 \leq \| r_0 \|_K ^2.
 \end{equation}
Because $d_0 = 0$, by (\ref{eq2}), it is evident that
  \begin{align*}
   \| -(\nu_t - \mu_K ) \|_K ^2  &\geq  \| -(\nu_t -\mu_K) - d_k \|_K ^2 = \|  \nu_t - \mu_K \|_K ^2 + 2\left< \nu_t -\mu_K, d_k \right>_K + \| d_k \|_K ^2  \\
    \Leftrightarrow \left< \nu_t -\mu_K, d_k \right>_K &\leq -\frac{\| d_k\|_K ^2 }{2} .
  \end{align*}
Using this, 
  \begin{align}
  \left< r_k , - d_k \right>_K &= \left< -( \nu_t - \mu_K  ) - d_k, - d_k \right>_K \notag\\
  &= \left< \nu_t - \mu_K, d_k  \right>_K + \| d_k \|_K ^2 \notag\\
 & \leq \frac{\| d_k \|_K ^2}{2} \notag .
  \end{align}
 Therefore, $d_{k+1} \in \mathrm{cone}(M - \nu_t )$. 
  \\
  (iv)\ By (iii), $d_{K_t} \in \mathrm{cone}(M - \nu_t )$. In addition, from the algorithm and (ii), it is evident that $g_t \in M - \nu_t$. Therefore, 
    $ \nu_t +g_t \in M$ and 
  \[ \nu_{t+1} = \nu_t + \gamma_t g_t = (1-\gamma_t)\nu_t + \gamma_t (\nu_t + g_t ) \in M . \]  
  (v) By (\ref{eq1}), $\mathrm{align}( -(\nu_t - \mu_K), v_0 -\nu_t ) \geq 0$. In addition, by $g_t = d_{K_t} / \Lambda_t$ and line \ref{compare align} of \autoref{positive matching pursuit}, the following holds true: 
  \begin{align*}
  \mathrm{align}(-(\nu_t -\mu_K), g_t) &=  \mathrm{align}(-(\nu_t -\mu_K), d_{K_t} ) \\
  &\geq  \mathrm{align}(-(\nu_t -\mu_K), d_{K_t-1} ) + \delta \\
    &\geq  \mathrm{align}(-(\nu_t -\mu_K), d_1 ) +(K_t -1) \delta \\
    &=\mathrm{align}(-(\nu_t -\mu_K), v_0 - \nu_t ) +(K_t -1)\delta .
  \end{align*}
   \end{proof}

By Proposition~\ref{prop1}, we can ensure that the output of the algorithm $\nu_t$ can be written as the convex combination of kernel functions $\{ K(x_i, \cdot) \}_{i=1}^n$. Therefore, \autoref{boost herding} outputs a stable quadrature  formula whose weights are positive and sum to $1$. 
 
Define $ \cos \theta_t \coloneqq \frac{\left< -(\nu_t - \mu_K), g_t \right>_K}{\| \mu_K - \nu_t \|_K \| g_t \|_K}$.   We remark on the step size $\gamma_t$; it can be easily confirmed that $\argmin_{\gamma_t \in \zissu} \| \nu_t + \gamma_t g_t - \mu_K \|_K ^2 = \frac{\left< -(\nu_t -\mu_K), g_t \right>_K}{\| g_t \|_K ^2}$ since $\| \nu_t + \gamma_t g_t - \mu_K \|_K ^2$ is a quadratic function of $\gamma_t$ and it holds that
\[\gamma_t = \argmin_{0 \leq \gamma_t \leq 1} \| \nu_t + \gamma_t g_t - \mu_K \|_K ^2 =\min\left\{\frac{\left< -(\nu_t -\mu_K), g_t \right>_K}{\| g_t \|_K ^2} , 1 \right\}\]
 for each $t \in \llbracket 1,   T-1\rrbracket$.

 The following proposition shows that the convergence speed of the worst-case error is influenced by $\cos \theta_t$.

 \begin{proposition}\label{prop2}
Let $A_t \coloneqq \{ i\in   \llbracket 1,   t-1\rrbracket \mid \gamma_i \neq 1\}$. Then, for all $t \in  \llbracket 1,   T\rrbracket$, it is valid that
 \[ \epsilon_{t} \leq \epsilon_1 \prod_{i \in A_t} (1 - \cos^2 \theta_i).  \]
 In particular, if $\gamma_i < 1$ for each $ i\in   \llbracket 1,   t-1\rrbracket $, $\epsilon_{t}  =  \epsilon_1 \prod_{1\leq i \leq t-1} (1 - \cos^2 \theta_i)$.
  \end{proposition}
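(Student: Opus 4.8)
The plan is to control how $\epsilon_t=\tfrac12\|\mu_K-\nu_t\|_K^2$ changes in a single iteration, and then to induct on $t$. First I would expand, for each $t\in\llbracket 1,T-1\rrbracket$,
\begin{align*}
2\epsilon_{t+1} &= \|\mu_K-\nu_{t+1}\|_K^2 = \bigl\|(\mu_K-\nu_t)-\gamma_t g_t\bigr\|_K^2 \\
&= \|\mu_K-\nu_t\|_K^2 - 2\gamma_t\langle -(\nu_t-\mu_K),g_t\rangle_K + \gamma_t^2\|g_t\|_K^2,
\end{align*}
and regard the right-hand side as a convex quadratic $q(\gamma)$ in $\gamma_t$. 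Since $\gamma_t$ is chosen by line search over $[0,1]$ and $\gamma=0$ is feasible with $q(0)=2\epsilon_t$, this already yields the monotonicity $\epsilon_{t+1}\le\epsilon_t$ for every $t$. When moreover $\gamma_t\neq 1$, the truncation at $1$ is inactive, so $\gamma_t$ equals the unconstrained minimizer $\langle -(\nu_t-\mu_K),g_t\rangle_K/\|g_t\|_K^2$ (this quotient is well defined because, as long as $\nu_t\neq\mu_K$, Proposition~\ref{prop1}(v) forces $g_t\neq 0$); substituting and invoking the definition of $\cos\theta_t$ gives the exact one-step identity $\epsilon_{t+1}=\epsilon_t(1-\cos^2\theta_t)$.

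Second, I would induct on $t\in\llbracket 1,T\rrbracket$ with the hypothesis $\epsilon_t\le\epsilon_1\prod_{i\in A_t}(1-\cos^2\theta_i)$. The base case $t=1$ is immediate since $A_1=\emptyset$ and the empty product is $1$. For the step from $t$ to $t+1$ I would split on whether $\gamma_t=1$: if $\gamma_t\neq 1$, then $A_{t+1}=A_t\cup\{t\}$ and the one-step identity combined with the inductive hypothesis gives $\epsilon_{t+1}=\epsilon_t(1-\cos^2\theta_t)\le\epsilon_1\prod_{i\in A_{t+1}}(1-\cos^2\theta_i)$; if $\gamma_t=1$, then $A_{t+1}=A_t$ and the monotonicity $\epsilon_{t+1}\le\epsilon_t$ with the inductive hypothesis gives the same bound. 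This establishes the displayed inequality.

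For the ``in particular'' clause, if $\gamma_i<1$ for every $i\in\llbracket 1,t-1\rrbracket$ then $A_t=\llbracket 1,t-1\rrbracket$, and in the induction above only the exact-identity branch is ever used, so every inequality is actually an equality and $\epsilon_t=\epsilon_1\prod_{1\le i\le t-1}(1-\cos^2\theta_i)$.

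The one point requiring care — and essentially the only subtlety — is the degenerate case $\nu_t=\mu_K$: then $\epsilon_t=0$, the quantity $\cos\theta_t$ is a priori $0/0$, and the factor $1-\cos^2\theta_t$ is ill-defined, but $\epsilon_s=0$ for all $s\ge t$ by the monotonicity just noted, so the bound holds trivially however one interprets the remaining factors; Proposition~\ref{prop1}(v) rules out the only other potential degeneracy (namely $g_t=0$ while $\nu_t\neq\mu_K$). Apart from this, the argument reduces to the elementary one-dimensional minimization above, and no genuinely infinite-dimensional difficulty enters, since every manipulation lives in the (at most two-dimensional) span of $\mu_K-\nu_t$ and $g_t$.
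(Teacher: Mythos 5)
Your proposal is correct and follows essentially the same route as the paper: expand $\|\mu_K-\nu_{i+1}\|_K^2$ as a quadratic in $\gamma_i$, use feasibility of $\gamma=0$ in the line search to get $\epsilon_{i+1}\le\epsilon_i$ in general, and substitute the unconstrained minimizer $\gamma_i=\langle -(\nu_i-\mu_K),g_i\rangle_K/\|g_i\|_K^2$ when $\gamma_i\neq 1$ to get the exact one-step identity $\epsilon_{i+1}=(1-\cos^2\theta_i)\,\epsilon_i$, then chain over $i$. Your explicit treatment of the degenerate case $\nu_t=\mu_K$ is a small additional care the paper omits, but it does not change the argument.
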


\begin{proof}
For each $i \in  \llbracket 1,   t-1\rrbracket$, because $0\leq \gamma_i \leq 1$,  the following is valid:
 \begin{equation}\label{eq3-2-1} 
\frac{1}{2}  \| \mu_K - \nu_{i+1} \|_K ^2  =  \frac{1}{2} \| \mu _K- (\nu_i + \gamma_i g_i) \|_K ^2 \leq \frac{1}{2}\| \mu_K - \nu_i  \|_K ^2 .
  \end{equation}
If $\gamma_i \neq 1$, $\gamma_i = \frac{\left< -(\nu_i -\mu_K), g_i \right>_K}{\| g_i \|_K ^2}$. Using this and the definition of $\cos \theta_t$, 
\begin{align}\label{eq3-2-2} 
 \frac{1}{2}\| \mu_K -\nu_{i+1} \|_K  ^2  \notag &= \frac{1}{2}\| \mu_K - (\nu_i  + \gamma_i g_i)\|_K ^2 \notag  \\
&=  \frac{1}{2}\| \mu_K -\nu_{i} \|_K  ^2 + \gamma_i \left< \nu_i - \mu_K, g_i \right>_K + \frac{1}{2} \gamma_i ^2 \| g_i \|_K ^2  \notag \\
&= \frac{1}{2}\| \mu_K -\nu_{i} \|_K ^2- \gamma_i \cos \theta_i  \| \nu_i - \mu_K \|_K \| g_i \|_K +  \frac{1}{2}  \gamma_i ^2  \| g_i \|_K ^2  \notag \\
&=  \frac{1}{2}\| \mu_K  -\nu_{i} \|_K ^2 -  \cos^2 \theta_i   \frac{\| \nu_i - \mu_K \|_K ^2}{2}    \notag \\
&= (1- \cos^2 \theta_i )  \frac{1}{2}\| \mu_K -\nu_{i} \|_K ^2 .
\end{align}
Using (\ref{eq3-2-1}) and (\ref{eq3-2-2}), we obtain the desired result. 
 \end{proof}

In the numerical experiments, which are described later, we had $\gamma_i < 1$ most of the time; thus, the convergence speed of the worst-case error is directly determined by $\cos \theta_i$. We note that if $K_{\max}$ is bounded and $\delta > 0$, the $O(1/t)$ convergence of $\| \mu_K - \nu_t \|_K ^2$ is guaranteed by Proposition~\ref{prop2}. This can be easily confirmed by following the convergence argument of the Frank-Wolfe algorithm for $L$-smooth functions (see, e.g., \cite {jaggi13fw}).

To consider the convergence of the worst-case error for a given number of nodes, we use Proposition~\ref{prop2}. It is evident that the convergence speed is directly influenced by $\cos \theta_t$. Therefore, we analyze the behavior of $\cos \theta_t$. First, we state  the convergence speed of positive matching pursuit.

 We prove the following proposition on the convergence speed of positive matching pursuit. To discuss the approximation of \autoref{positive matching pursuit}, we consider the case $\delta = - \infty$, i.e., we do not consider the truncation on line \autoref{compare align} in \autoref{positive matching pursuit}. Proposition~\ref{prop3-3} below states the convergence speed of the positive matching pursuit algorithm at the $t$-th iteration. The procedure of the proof is similar to that in \cite{locatello2017greedy}.  
\begin{proposition}\label{prop3-3}
Let $\delta = -\infty$. If we solve $\min_{d_k \in \mathrm{cone}(V -\nu_t)}  \| -(\nu_t - \mu_K) - d_k \|_K ^2 $ by positive matching pursuit, \[\| -(\nu_t - \mu_K) - d_k \|_K ^2  \leq  \frac{\| \nu_t - \mu_K \|_K ^2}{1+C k \|\nu_t - \mu_K \|_K ^2  }  \]
holds true, where $C$ is a positive constant.
\end{proposition}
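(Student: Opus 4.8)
The plan is to reproduce the classical matching-pursuit style analysis of \cite{locatello2017greedy}, adapted to our setting where the target vector is $-(\nu_t - \mu_K)$, the dictionary is the set of vertex directions $V - \nu_t$, and the cone $\mathrm{cone}(V-\nu_t)$ is where we search. Write $g^\star \coloneqq -(\nu_t - \mu_K)$ for the target and $r_k \coloneqq g^\star - d_k$ for the residual; recall from the algorithm that $r_0 = g^\star$ (since $d_0 = 0$), so $\|r_0\|_K^2 = \|\nu_t - \mu_K\|_K^2$. The key recursive identity, already essentially computed in the proof of Proposition~\ref{prop1}, is
\[
\|r_{k+1}\|_K^2 = \|r_k\|_K^2 - \frac{\left< r_k, u_k \right>_K^2}{\|u_k\|_K^2},
\]
where $u_k$ maximizes $\left< r_k, u \right>_K$ over the candidate set. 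So the whole proof reduces to a good lower bound on the per-step decrease $\left< r_k, u_k \right>_K^2 / \|u_k\|_K^2$ in terms of $\|r_k\|_K^2$.

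The main step is therefore to lower bound $\left< r_k, u_k\right>_K$. Since $u_k$ is chosen to maximize the inner product over a set that includes $v_k - \nu_t$ with $v_k \in \argmax_{v\in V}\left< r_k, v\right>_K$, we have $\left< r_k, u_k\right>_K \geq \left< r_k, v_k - \nu_t\right>_K = \max_{v\in M}\left< r_k, v - \nu_t\right>_K$ (using that $M = \overline{\mathrm{conv}(V)}$, so maximizing over $V$ or over $M$ gives the same value). The standard trick is: because $d_k \in \mathrm{cone}(V - \nu_t)$ and the optimal approximant also lies in this cone, the residual $r_k$ has nonnegative correlation with some scaled vertex direction, and by a projection/duality argument $\max_{v\in M}\left< r_k, v-\nu_t\right>_K \geq \|r_k\|_K^2 / D$ where $D$ is (a bound on) the diameter of $M - \nu_t$, i.e. $D = \sup_{v, w \in M}\|v - w\|_K \le 2\sup_{x\in\Omega}\|K(x,\cdot)\|_K < \infty$ by continuity of $K$ and compactness of $\Omega$. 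One must also control $\|u_k\|_K^2 \le D^2$ (both candidate directions have norm at most $D$, the backward direction $-d_k/\|d_k\|_K$ being a unit vector — so really the bound is $\max\{D^2, 1\}$, which can be absorbed into the constant). Combining, the decrease satisfies $\left< r_k, u_k\right>_K^2 / \|u_k\|_K^2 \geq C \|r_k\|_K^4$ for $C = 1/(D^2 \max\{D^2,1\})$ or similar, giving the recursion $\|r_{k+1}\|_K^2 \le \|r_k\|_K^2 - C\|r_k\|_K^4$.

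From the scalar recursion $a_{k+1} \le a_k - C a_k^2$ with $a_0 = \|\nu_t - \mu_K\|_K^2$, the standard inductive argument (divide by $a_k a_{k+1}$ and telescope, or directly verify $1/a_{k+1} \ge 1/a_k + C$) yields $a_k \le a_0 / (1 + C k a_0)$, which is exactly the claimed bound $\|-(\nu_t-\mu_K) - d_k\|_K^2 \le \|\nu_t-\mu_K\|_K^2 / (1 + Ck\|\nu_t-\mu_K\|_K^2)$. I would present this telescoping as a short lemma or inline it.

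The step I expect to be the main obstacle is the lower bound $\max_{v\in M}\left< r_k, v - \nu_t\right>_K \geq \|r_k\|_K^2 / D$: it requires knowing that the residual $r_k$ still "points into" the cone $\mathrm{cone}(M - \nu_t)$, i.e. that the optimum $d^\star$ of the projection problem is not yet reached and the correlation with the best vertex direction dominates $\|r_k\|_K^2$. This is where the conical (nonnegativity) structure matters and where the argument differs from ordinary matching pursuit over a symmetric dictionary; one has to invoke that $g^\star = -(\nu_t - \mu_K)$ itself lies in $\mathrm{cone}(M - \nu_t)$ (which follows from $\mu_K, \nu_t \in M$ as in Proposition~\ref{prop1}), so that the projection residual is orthogonal to nothing forcing a contradiction, and a Carathéodory/averaging argument over the conic representation of $g^\star - d_k$ produces a single vertex direction achieving correlation at least $\|r_k\|_K^2$ divided by the diameter. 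Handling the possible selection of the backward direction $u_k = -d_k/\|d_k\|_K$ only helps (it can only increase the inner product), so it does not affect the lower bound, but one should remark that it is consistent with $d_{k+1}$ staying in the cone, which was already established in Proposition~\ref{prop1}(iii).
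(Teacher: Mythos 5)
Your overall skeleton matches the paper's: the per-step identity $\|r_{k+1}\|_K^2 = \|r_k\|_K^2 - \langle r_k, u_k\rangle_K^2/\|u_k\|_K^2$, followed by a recursion of the form $\eta_{k+1}\le \eta_k - C\eta_k^2$ and the telescoping lemma (the paper's Lemma~\ref{lem3-1}). But the step you yourself flag as ``the main obstacle'' --- the per-step progress bound --- is exactly where your sketch breaks down, and it is the only non-routine part of the proof. You propose $\max_{v\in M}\langle r_k, v-\nu_t\rangle_K \ge \|r_k\|_K^2/D$ with $D$ the diameter of $M-\nu_t$, to be obtained by a Carath\'eodory/averaging argument over ``the conic representation of $g^\star - d_k$.'' That representation need not exist: $g^\star=\mu_K-\nu_t$ lies in $M-\nu_t$ and $d_k\in\mathrm{cone}(M-\nu_t)$, but $-d_k$ is in general \emph{not} in the cone, so $r_k=g^\star-d_k$ need not be a nonnegative combination of vertex directions, and the averaging step has nothing to average over. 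Concretely, $\langle r_k,\mu_K-\nu_t\rangle_K=\|r_k\|_K^2+\langle r_k,d_k\rangle_K$, and $\langle r_k,d_k\rangle_K$ can be negative (the iterate can overshoot), so the forward correlation alone is not bounded below by $\|r_k\|_K^2/D$ by anything you have written.

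The paper's fix is precisely the backward atom that you dismiss as ``only helping'': enlarging the candidate set by $-d_k/\|d_k\|_K$ makes $r_k=(\mu_K-\nu_t)+\|d_k\|_K\cdot\bigl(-d_k/\|d_k\|_K\bigr)$ a conic combination over the extended dictionary with total weight $1+\|d_k\|_K$. Since $u_k$ maximizes $\langle r_k,\cdot\rangle_K$ over a set containing both $v_k-\nu_t$ (whose correlation dominates that of $\mu_K-\nu_t$ because $\mu_K\in M$) and $-d_k/\|d_k\|_K$, averaging the two terms gives $\langle r_k,u_k\rangle_K\ge \|r_k\|_K^2/(1+\|d_k\|_K)$; this is what the paper's cases (A) and (B) establish, and it is why the argmax defining $u_k$ in \autoref{positive matching pursuit} must include the backward direction --- in case (A) one needs the fact that $v_k-\nu_t$ \emph{beat} $-d_k/\|d_k\|_K$ to control the $\langle r_k,-d_k\rangle_K$ term. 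The resulting denominator is not the diameter of $M-\nu_t$ but $\sup_k(1+\|d_k\|_K)^2$, which the paper bounds via the monotone decrease of $\|r_k\|_K$ (giving $\|d_k\|_K\le 2\|\mu_K-\nu_t\|_K$). With that per-step bound in place, your telescoping step is correct and identical to the paper's.
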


For preparation, we use the following lemma. 
\begin{lemma}[\cite{polyak1987introduction, beck2004conditional}]\label{lem3-1}
Let $\{ a_k \}_{k=0}^m$ be a nonnegative sequence of real numbers. If $\{ a_k \}_{k=0}^m$ satisfies $a_{k+1} \leq a_{k} - \gamma a_k ^2\ (k=0, \ldots, m-1)$  for $\gamma > 0$, 
\[ a_m \leq \frac{a_0}{1+m\gamma a_0}. \]
\end{lemma}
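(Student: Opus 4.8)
\textbf{Plan for proving Lemma~\ref{lem3-1}.}
The statement to prove is the elementary recursion bound: given a nonnegative sequence $\{a_k\}_{k=0}^m$ satisfying $a_{k+1} \leq a_k - \gamma a_k^2$ for all $k \in \{0, \ldots, m-1\}$ with $\gamma > 0$, we have $a_m \leq a_0/(1 + m\gamma a_0)$. The standard and cleanest approach is to work with reciprocals: define $b_k \coloneqq 1/a_k$ (handling the degenerate case $a_k = 0$ separately) and show that $b_{k+1} \geq b_k + \gamma$, which telescopes to $b_m \geq b_0 + m\gamma$, i.e.\ exactly the claimed bound after inverting.

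First I would dispose of the trivial cases. If $a_0 = 0$, then the recursion forces $a_{k} = 0$ for all $k$ (since $0 \leq a_{k+1} \leq a_k - \gamma a_k^2$ gives $a_{k+1} = 0$ whenever $a_k = 0$), and the claimed inequality reads $0 \leq 0$. More generally, if some $a_k = 0$ the subsequent terms vanish and the bound holds because its right-hand side stays nonnegative. So I may assume $a_k > 0$ for all relevant indices. The heart of the argument is the single-step estimate: starting from $a_{k+1} \leq a_k - \gamma a_k^2 = a_k(1 - \gamma a_k)$, I divide through. Since $a_k > 0$ and $a_{k+1} \geq 0$, and since the recursion together with nonnegativity forces $1 - \gamma a_k \geq 0$ (otherwise $a_{k+1}$ would be negative), I can write
\[
\frac{1}{a_{k+1}} \geq \frac{1}{a_k(1 - \gamma a_k)} = \frac{1}{a_k}\cdot\frac{1}{1-\gamma a_k} \geq \frac{1}{a_k}(1 + \gamma a_k) = \frac{1}{a_k} + \gamma,
\]
where the middle inequality uses $1/(1-x) \geq 1 + x$ for $x \in [0,1)$. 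This yields $b_{k+1} \geq b_k + \gamma$.

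Summing this over $k = 0, \ldots, m-1$ telescopes to $b_m \geq b_0 + m\gamma$, that is $1/a_m \geq 1/a_0 + m\gamma = (1 + m\gamma a_0)/a_0$. Taking reciprocals (both sides positive) gives $a_m \leq a_0/(1 + m\gamma a_0)$, which is the claim. The only point requiring care — and the main, though minor, obstacle — is justifying the division step, namely verifying that $1 - \gamma a_k > 0$ (strictly, so that the reciprocal is well-defined and the inequality $1/(1-\gamma a_k) \geq 1 + \gamma a_k$ applies) and that no $a_k$ vanishes prematurely. Both follow from combining the hypothesis $a_{k+1} \geq 0$ with the recursion $a_{k+1} \leq a_k(1-\gamma a_k)$: nonnegativity of $a_{k+1}$ forces $1 - \gamma a_k \geq 0$, and the boundary case $1 - \gamma a_k = 0$ (forcing $a_{k+1}=0$) is absorbed into the degenerate-case discussion above. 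Since this is a classical lemma (cited from \cite{polyak1987introduction, beck2004conditional}), I would keep the write-up brief, presenting the one-step reciprocal inequality and the telescoping sum as the two key displays.
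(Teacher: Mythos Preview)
Your argument is correct and is the standard reciprocal-telescoping proof of this classical recursion bound. Note, however, that the paper does not supply its own proof of Lemma~\ref{lem3-1}: it simply states the lemma with citations to \cite{polyak1987introduction, beck2004conditional} and then invokes it in the proof of Proposition~\ref{prop3-3}. So there is no ``paper's proof'' to compare against; your write-up would serve perfectly well as a self-contained justification should one wish to include it.
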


Using the above lemma, we prove  Proposition~\ref{prop3-3}. 
 \begin{proof} (Proposition~\ref{prop3-3})
Define $\eta_k = \| -(\nu_t - \mu_K) - d_k \|_k ^2$. First, the following equality holds true:
\begin{equation}\label{eq3-3-1}
  \| -(\nu_t - \mu_K) - d_{k+1} \|_K ^2 = \| -(\nu_t - \mu_K) - d_{k} \|_K ^2 + 2 \left< d_k +\nu_t -\mu_K, d_{k+1} -d_k  \right>_K + \| d_{k+1} -d_k \|_K ^2  .
  \end{equation}
Here, we divide into cases,  as for $u_k \leftarrow \argmax_{u\in \{ v_{k} - \nu_t, -d_k / \| d_k \|_K \}  } \left< r_k, u \right>_K$ in \autoref{positive matching pursuit}: \\
(A) $u_k = v_{k} -\nu_t$\\
The following is evidently valid: $\left< \mu_K -\nu_t - d_k, v_{k} -\nu_t \right>_K \geq \left< \mu_K -\nu_t - d_k,  -d_k / \| d_k \|_K \right>_K  $. In addition, 
\[  \left< \mu_K -\nu_t -d_k, v_{k} -\nu_t \right>_K = \max_{v\in M} \left< \mu_K -\nu_t -d_k, v -\nu_t \right>_K \geq  \left< \mu_K -\nu_t -d_k, \mu_K -\nu_t \right>_K  .\]
Therefore, 
\begin{align*}
\left< \mu_K -\nu_t - d_k, (\| d_k\|_K +1)(v_{k} -\nu_t)\right>_K  &\geq  \left< \mu_K -\nu_t - d_k,  -d_k \right>_K + \left< \mu_K  -\nu_t -d_k, \mu_K  -\nu_t \right>_K \\
&=\| \mu_K -\nu_t -d_k \|_K ^2 \\
&= \eta_k.
\end{align*}
Note that this inequality also holds true for $k=0$. 
By the algorithm, $d_{k+1} = d_k + \frac{\left< \mu_K -\nu_t -d_k, v_{k} -\nu_t \right>_K}{\| v_{k} - \nu_t \|_K ^2} (v_k -\nu_t)$. By substituting this into (\ref{eq3-3-1}), the following holds true:
\[ \eta_{k+1} = \eta_k- \frac{\left< d_k + \nu_t - \mu_K, v_{k}-\nu_t \right>_K ^2 }{\| v_{k} - \nu_t \|_K ^2} \leq \eta_k - \frac{\eta_k ^2}{\| v_{k} - \nu_t \|_K ^2 (\| d_k \|_K +1)^2}. \]\\
(B)\ $u_k =  -d_k / \| d_k \|_K $\\
By the assumption,  it holds that
\begin{align*}
\left< d_k+ \nu_t - \mu_K, -\frac{d_k}{\| d_k \|_K} \right>_K &\leq \left< d_k+ \nu_t - \mu_K, v_{k} -\nu_t \right>_K \\
&= \argmin_{v\in M} \left< d_k+ \nu_t - \mu_K, v -\nu_t \right>_K\\
&\leq \left< d_k + \nu_t -\mu_K, \mu_K -\nu_t \right>_K .
\end{align*}
Therefore, 
\begin{align} \label{prop3-3-2}
 &\left< d_k +\nu_t -\mu_K, -\frac{\| d_k \|_K+1}{\| d_k\|_K} d_k \right> _K \leq \left< d_k +\nu_t -\mu_K, \mu_K -\nu_t - d_k\right>_K = -\eta_k \leq 0\notag  \\
 &\Rightarrow  \left< d_k +\nu_t -\mu_K, -\frac{d_k}{\| d_k\|_K}  \right> _K  ^2 \geq \frac{\eta_k ^2}{(\| d_k \|_K +1)^2} .
 \end{align}
By substituting $d_{k+1}= d_k +  \left< \mu_K -\nu_t -d_k,  -\frac{d_k}{\| d_k\|_K}\right>_K (-\frac{d_k}{\| d_k \|_K})$ into (\ref{eq3-3-1}) and using (\ref{prop3-3-2}), we have
\[ \eta_{k+1} \leq \eta_k -   \frac{\eta_k ^2}{(\| d_k \|_K +1)^2} .\]

From, $(A), (B)$, the following is valid: 
\[ \eta_{k+1} \leq  \eta_{k}  - C {\eta_k ^2 }, \]
where $\sup_{k} (\| d_k \|_K +1)^2 = C_1$, $\max\{ 1, \sup_{v \in M} \|v -\nu_t \|_K ^2 \}=C_2$ and $C_1 C_2 =\frac{1}{C}$. By applying Lemma~\ref{lem3-1} to this, 
\[\eta_{k} \leq \frac{\eta_0}{1+C\eta_0 k} .\]
\end{proof}

\begin{remark}
Note that the constants $C_1$ and $C_2$ in the proof of Proposition~\ref{prop3-3} are bounded regardless of $k$. We first explain  $C_1 =\sup_{k} (\| d_k \|_K +1)^2 $. By the aforementioned proof, $\| d_k - (\mu_K -\nu_t)  \|_K$ decreases monotonically. Therefore, the following holds true:  
 \begin{align*}
 C_1 &\leq  (1+ \| d_k - (\mu_K -\nu_t)  \|_K + \| \mu_K  -\nu_t \|_K)^2   \\
 &\leq   (1+ \| d_0 - (\mu_K -\nu_t)  \|_K + \| \mu_K  -\nu_t \|_K)^2 \\
 &= (1+ 2\| \mu_K  -\nu_t \|_K)^2 . 
 \end{align*}
  The value $\| \mu_K  -\nu_t \|_K ^2$ is bounded because 
\begin{align*}
\| \mu_K  -\nu_t \|_K ^2  &= \int_{\Omega} \int_{\Omega} K(\nu, y) (\mu - \nu_t)(\mathrm{d}\nu)  (\mu - \nu_t)(\mathrm{d}y) \\
&\leq \int_{\Omega}  \left|   \int_{\Omega} K(\nu, y) (\mu - \nu_t)(\mathrm{d}\nu)  \right|  |\mu - \nu_t |(\mathrm{d}y)  \\
&\leq \int_{\Omega}  \|  K  \|_{\infty}  |\mu - \nu_t |(\mathrm{d}y)  \\ 
&\leq   2\| K \|_{\infty}.
\end{align*}
Note that in the above calculation, by abuse of notation, we consider $\nu_t$ as the discrete measure. In the same manner, the boundedness of $C_2$ can be proved. 
\end{remark}

We define $ \cos \theta_{t, k} \coloneqq  \frac{\left< -(\nu_t - \mu_K), d_k \right>_K}{\| \mu_K - \nu_t \|_K \| d_k \|_K}. $
Using Proposition~\ref{prop3-3}, we can derive the lower bound of $\cos \theta_{t, k}$. 

\begin{corollary} \label{cor3-2}
For the same condition as Proposition~\ref{prop3-3}, \[ \cos \theta_{t, k}  \geq  1 -  \frac{1}{2} \cdot \frac{ \frac{ \| \mu_K -\nu_t \|_K}{\| d_k  \|_K}   }{1+Ck \| \mu_K -\nu_t \|_K ^2 } \]
holds true, where $C$ is a positive constant.
\end{corollary}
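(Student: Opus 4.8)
The plan is to deduce Corollary~\ref{cor3-2} directly from Proposition~\ref{prop3-3} by relating $\|-(\nu_t-\mu_K)-d_k\|_K^2$ to $\cos\theta_{t,k}$ via the law of cosines in $\mathcal{H}_K$. First I would expand, abbreviating $g \coloneqq \mu_K-\nu_t$ (so $-(\nu_t-\mu_K)=g$),
\[
\|g-d_k\|_K^2 = \|g\|_K^2 - 2\langle g, d_k\rangle_K + \|d_k\|_K^2
= \|g\|_K^2 - 2\cos\theta_{t,k}\,\|g\|_K\|d_k\|_K + \|d_k\|_K^2,
\]
using the definition $\cos\theta_{t,k} = \langle g, d_k\rangle_K/(\|g\|_K\|d_k\|_K)$. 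Solving for $\cos\theta_{t,k}$ gives
\[
\cos\theta_{t,k} = \frac{\|g\|_K^2 + \|d_k\|_K^2 - \|g-d_k\|_K^2}{2\|g\|_K\|d_k\|_K}.
\]

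Next I would drop the nonnegative term $\|d_k\|_K^2$ from the numerator only where it helps and instead bound $\|g\|_K^2 - \|g-d_k\|_K^2$ from below using Proposition~\ref{prop3-3}. Writing $\eta_k \coloneqq \|g-d_k\|_K^2 \le \|g\|_K^2/(1+Ck\|g\|_K^2)$, we get
\[
\|g\|_K^2 - \eta_k \ge \|g\|_K^2\left(1 - \frac{1}{1+Ck\|g\|_K^2}\right) = \frac{Ck\|g\|_K^4}{1+Ck\|g\|_K^2}.
\]
Then
\[
\cos\theta_{t,k} \ge \frac{\|d_k\|_K^2 + \frac{Ck\|g\|_K^4}{1+Ck\|g\|_K^2}}{2\|g\|_K\|d_k\|_K}
= \frac{\|d_k\|_K}{2\|g\|_K} + \frac{Ck\|g\|_K^3}{2\|d_k\|_K(1+Ck\|g\|_K^2)}.
\]
To reach the stated bound $1 - \tfrac12\cdot\frac{\|g\|_K/\|d_k\|_K}{1+Ck\|g\|_K^2}$, I would instead keep the numerator as $\|g\|_K^2 + \|d_k\|_K^2 - \eta_k$, apply the AM–GM-type inequality $\|g\|_K^2 + \|d_k\|_K^2 \ge 2\|g\|_K\|d_k\|_K$ to the part not involving $\eta_k$, and isolate the $\eta_k$ correction:
\[
\cos\theta_{t,k} = \frac{\|g\|_K^2 + \|d_k\|_K^2}{2\|g\|_K\|d_k\|_K} - \frac{\eta_k}{2\|g\|_K\|d_k\|_K} \ge 1 - \frac{\eta_k}{2\|g\|_K\|d_k\|_K} \ge 1 - \frac{1}{2}\cdot\frac{\|g\|_K/\|d_k\|_K}{1+Ck\|g\|_K^2},
\]
where the last step substitutes the Proposition~\ref{prop3-3} bound $\eta_k \le \|g\|_K^2/(1+Ck\|g\|_K^2)$ and cancels one factor of $\|g\|_K$.

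The computation is essentially routine; the only point requiring a little care is the step $\frac{\|g\|_K^2 + \|d_k\|_K^2}{2\|g\|_K\|d_k\|_K} \ge 1$, which is just the arithmetic–geometric mean inequality and needs $d_k \ne 0$, guaranteed for $k\ge 1$ since by Proposition~\ref{prop1}(i) the first atom $d_1$ is well-defined and nonzero (and $\|g\|_K \ne 0$ unless the error is already zero, in which case the statement is trivial). The main obstacle, such as it is, lies in Proposition~\ref{prop3-3} itself rather than here; given that result, the corollary follows by the above three-line chain of inequalities.
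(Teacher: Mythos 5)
Your argument is correct and is essentially identical to the paper's proof: both expand $\|(\mu_K-\nu_t)-d_k\|_K^2$ via the law of cosines, solve for $\cos\theta_{t,k}$, apply the AM--GM bound $\tfrac12\frac{\|\mu_K-\nu_t\|_K}{\|d_k\|_K}+\tfrac12\frac{\|d_k\|_K}{\|\mu_K-\nu_t\|_K}\geq 1$, and then substitute the bound from Proposition~\ref{prop3-3}. The intermediate detour in your write-up is unnecessary but harmless, and your remark about needing $d_k\neq 0$ is a minor point the paper leaves implicit.
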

\begin{proof}
By simple calculation, 
\begin{align*}
\| (\mu_K  -\nu_t) -d_k \|_K ^2 &= \| \mu_K  -\nu_t  \|_K ^2 +\| d_k \|_K ^2 -  2\left< \mu_K  -\nu_t, d_k\right>_K  \\
\Leftrightarrow \cos \theta_{t, k} &=\frac{1}{2}  \frac{\| \mu_K  -\nu_t \|_K}{\| d_k \|} + \frac{1}{2} \frac{\| d_k \|_K}{\| \mu_K -\nu_t \|_K}  -  \frac{\| (\mu_K  -\nu_t) -d_k \|_K ^2}{2\| d_k \|_K \| \mu_K  -\nu_t \|_K} .
 \end{align*}
Using $\frac{1}{2}  \frac{\| \mu_K -\nu_t \|}{\| d_k \|_K} + \frac{1}{2} \frac{\| d_k \|_K}{\| \mu_K -\nu_t \|_K} \geq 1 $, 
\[ \cos \theta_{t, k} \geq 1 -  \frac{\| (\mu_K  -\nu_t) -d_k \|_K ^2}{2\| d_k \|_K \| \mu_K -\nu_t \|_K}. \]
In addition, by applying Proposition~\ref{prop3-3}, 
$ \cos \theta_{t, k}  \geq  1 -  \frac{1}{2} \cdot \frac{ \frac{ \| \mu_K -\nu_t \|_K}{\| d_k  \|_K}   }{1+Ck \| \mu_K -\nu_t \|_K ^2 }.$
\end{proof}

By applying a triangle inequality, we have $\frac{\| \mu_K -\nu_t \|_K}{\| d_k \|_K} \leq  \frac{\sqrt{1+C\epsilon_t k} }{{\sqrt{1+C\epsilon_t k} }-1}$. Using this inequality, we have $\cos\theta_{t, k} \geq 1 -\frac{1}{2} \frac{1}{\sqrt{1+C\epsilon_t k} (\sqrt{1+C\epsilon_t k} -1)}$. From this inequality, it can be observed that as $\epsilon_t$ decreases, the growth of the lower bound of $k$ becomes slower because $k$ is multiplied by $\epsilon_t$. 
Because Corollary~\ref{cor3-2} only yields a lower bound, we cannot conclude that it is difficult to increase $\cos \theta_t$ as $\| \mu_K -  \nu_t  \|_K$ decreases; however, this gives some insight into the convergence speed. As the worst-case error decreases, more points are required to approximate the direction of $-(\nu_t - \mu_K)$. In the experiment, it became difficult to increase $\cos \theta_t$ as $\| \mu_K -  \nu_t  \|_K$ decreased. Considering this property, we propose a method that directly maximizes $\cos \theta_t$.
 
\subsubsection{Greedy maximization of $\cos \theta_t$ method}\label{subs3-2}
Proposition~\ref{prop2} suggests that $\cos \theta_t$ is directly related to the convergence speed of the worst-case error $\| \mu_K - \nu_t \|_K$. Then, if we can maximize $\cos \theta_t$ effectively, we can expect faster convergence. We propose an improved kernel herding algorithm that uses the greedy maximizing method of $\cos \theta_t$ instead of positive matching pursuit. The algorithm is \autoref{greedy cos}. In this algorithm, we greedily add $c ( K(\cdot, y) - \nu_t)$ to the present descent direction $d_k$ to maximize $\cos \theta_{t, k+1}$. We note that Propositions~\ref{prop1} and~\ref{prop2} also hold true if we use \autoref{greedy cos} instead of \autoref{positive matching pursuit} for the approximation of the negative gradient $\mu_K - \nu_t$. For Proposition~\ref{prop1}, because $c_k \geq 0$, it can be ensured that the coefficients of $v_k$ are non-negative. Thus, we can confirm the properties in Proposition~\ref{prop1} by following the same argument. In addition, for the proof of Proposition~\ref{prop2}, we only use the properties of \autoref{boost herding} and the truncation by $\delta$. Therefore, they are also applicable to  \autoref{greedy cos}. We state this in the form of a theorem as follows. 

\begin{theorem}
Propositions~\ref{prop1} and~\ref{prop2} hold true if we apply \autoref{greedy cos} instead of \autoref{positive matching pursuit} to  \autoref{boost herding}. 
\end{theorem}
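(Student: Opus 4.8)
The plan is to verify that the two proofs given earlier for Propositions~\ref{prop1} and~\ref{prop2} go through verbatim, or nearly so, when the subroutine \autoref{positive matching pursuit} is replaced by \autoref{greedy cos} inside \autoref{boost herding}. The key observation is that neither proof uses the internal mechanics of how $d_k$ is built up; they use only a short list of structural facts about the output $g_t$ and about the intermediate iterates $d_k$. So the first step is to isolate exactly which facts about \autoref{positive matching pursuit} were invoked, and then check each one for \autoref{greedy cos}.

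For Proposition~\ref{prop1}, the properties used are: (a) the first direction $d_1$ is well-defined and the truncation test passes at least once, so $K_t\ge 1$; (b) the new atom added at each round is of the form (nonnegative scalar)$\times(v-\nu_t)$ with $v\in V$; (c) $d_k\in\mathrm{cone}(M-\nu_t)$ for all $k$; (d) the $\mathrm{align}$ value is nondecreasing by at least $\delta$ at each accepted round, so the telescoped bound in (v) holds; and (e) $g_t = d_{K_t}/\Lambda_t\in M-\nu_t$, i.e. $\Lambda_t$ is exactly the sum of coefficients needed to renormalize $d_{K_t}$ back into the convex hull. For \autoref{greedy cos}, item (b) is immediate since line~\ref{line1} optimizes over $c\ge 0$ and $v\in V-\nu_t$; item (c) then follows by the same trivial induction ($d_1\in\mathrm{cone}(V-\nu_t)$ because $d_1\in V-\nu_t$, and $d_{k+1}=d_k+c_kv_k$ with $c_k\ge 0$). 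Item (a): $d_1=\argmax_{v\in V-\nu_t}\mathrm{align}(-(\nu_t-\mu_K),v)$ satisfies $\mathrm{align}(-(\nu_t-\mu_K),d_1)\ge\mathrm{align}(-(\nu_t-\mu_K),v_0-\nu_t)\ge 0 > -1$, where the last inequality is (\ref{eq1}) with $M$ replaced by $V$ (and $\overline{\mathrm{conv}}(V)=M$ gives the same $\argmax$ value up to closure); so the very first iteration passes the test whenever $\delta < \mathrm{align}(-(\nu_t-\mu_K),d_1)-\mathrm{align}(-(\nu_t-\mu_K),d_0)$, and one should note the mild point that \autoref{greedy cos} requires $\delta\ge 0$, and that $\mathrm{align}(-(\nu_t-\mu_K),d_0)=-1$ by convention while $\mathrm{align}(-(\nu_t-\mu_K),d_1)\ge 0$, so $K_t\ge1$. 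Item (d) is built into the acceptance test, identical in both algorithms, so (v) telescopes exactly as written (with $v_0\in\argmax_{v\in M}\langle-(\nu_t-\mu_K),v\rangle$ and $d_1$ in place of the former $v_0-\nu_t$, noting $\mathrm{align}(-(\nu_t-\mu_K),d_1)\ge\mathrm{align}(-(\nu_t-\mu_K),v_0-\nu_t)$ since $d_1$ maximizes the alignment over $V-\nu_t$). Item (e): since $d_{K_t}=d_1+\sum_{k=1}^{K_t-1}c_kv_k$ with $v_k\in V-\nu_t$ and coefficients summing to $\Lambda_t=1+\sum_{k=1}^{K_t-1}c_k$, the point $d_{K_t}/\Lambda_t$ is a convex combination of elements of $V-\nu_t$, hence lies in $\overline{\mathrm{conv}}(V-\nu_t)=M-\nu_t$, so $\nu_t+g_t\in M$ and $\nu_{t+1}=(1-\gamma_t)\nu_t+\gamma_t(\nu_t+g_t)\in M$. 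This establishes (iv). Thus all of (i)--(v) transfer.

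For Proposition~\ref{prop2}, inspection of its proof shows it uses nothing about the subroutine at all: it uses only that $\gamma_i\in[0,1]$ (so (\ref{eq3-2-1}) holds), that when $\gamma_i\ne1$ the line search gives $\gamma_i=\langle-(\nu_i-\mu_K),g_i\rangle_K/\|g_i\|_K^2$ (a property of \autoref{boost herding}, not of the subroutine), and the algebraic definition of $\cos\theta_i$. All of these are unchanged. Hence the computation (\ref{eq3-2-2}) and the telescoping product carry over verbatim.

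So the writeup is short: restate which structural facts the two proofs rely on, verify each for \autoref{greedy cos} — essentially just (b), (c), and (e) above, since (a), (d), (v), and all of Proposition~\ref{prop2} are literally the same argument — and conclude. I do not expect a genuine obstacle here; the only point requiring a little care is item (a)/(e), namely confirming that line~\ref{line1}'s optimization over $c\ge 0,\ v\in V-\nu_t$ really does produce a nonnegative combination that renormalizes correctly into $M-\nu_t$ via $\Lambda_t$, and that the well-definedness of the argmax on line~\ref{line1} (existence of a maximizer of a continuous function over a compact set, using continuity of $K$ and compactness of $\Omega$, together with the fact that the objective is scale-invariant in $(d_k+cv)$ so one may restrict $c$ to a bounded interval) is not an issue — this is the analogue of the ``$d_1$ well-defined'' check and is handled the same way as in the original Proposition~\ref{prop1}(i).
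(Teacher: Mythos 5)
Your proposal is correct and follows the same route as the paper: the paper's justification is precisely that the proofs of Propositions~\ref{prop1} and~\ref{prop2} transfer because the coefficients $c_k$ in \autoref{greedy cos} are nonnegative (so the structural facts about $d_k$, $\Lambda_t$, and $g_t$ still hold) and because the proof of Proposition~\ref{prop2} uses only the line-search step size in \autoref{boost herding} and the truncation by $\delta$, not the internals of the gradient-approximation subroutine. You simply spell out in more detail the checklist of properties that the paper asserts carry over.
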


However, it is not evident that the optimization on line \ref{line1} of \autoref{greedy cos} has a solution with $c >0$. In addition, the optimization procedure on line \ref{line1} is unclear. In the following argument, we discuss these problems. 

Note that we assume $\cos \theta_{t, k}= \frac{\left< -(\nu_t -\mu_K), d_k \right>_K}{\| \mu_K - \nu_t \|_K \| d_k \|_K} < 1$ and $\nu_t \neq \mu_K$ for simplicity.

 We consider an optimization problem 
\[  \argmax_{c \geq 0, v\in V - \nu_t} \frac{\left< -( \nu_t - \mu_K), d_k + c v \right>_K}{\| \mu_K -\nu_t \|_K \| d_k +c v  \|_K} =   \argmax_{c \geq 0, v\in V-\nu_t} \frac{\left< -( \nu_t - \mu_K), d_k + c v \right>_K}{ \| d_k +c v  \|_K}   \]
in \autoref{greedy cos}. For $y\in \Omega$, we denote $\frac{\left< -( \nu_t - \mu_K), d_k + c (K(\cdot, y) - \nu_t) \right>_K}{ \| d_k +c (K(\cdot, y) - \nu_t)   \|_K}  $ by $g(c, y)$. The function $g(c, y)$ can be written as
\[ g(c, y)=\frac{c  \left< -( \nu_t - \mu_K),   K(\cdot, y) - \nu_t \right>_K  + \left< -(\nu_t -\mu_K), d_k \right>_K}{\sqrt{ c^2 \| K(\cdot, y) - \nu_t \|_K ^2 +2c \left< K(\cdot, y) - \nu_t, d_k \right>_K +\| d_k \|_K ^2}}
= \frac{cp + q}{\sqrt{\alpha c^2 + 2 \beta c + \gamma}} ,\] 
where $p \coloneqq \left< -( \nu_t - \mu_K),   K(\cdot, y) - \nu_t\right>_K $, $q \coloneqq \left< -(\nu_t -\mu_K), d_k \right>_K$, $\alpha \coloneqq \| K(\cdot, y) - \nu_t \|_K ^2 $, $\beta \coloneqq \left< K(\cdot, y) - \nu_t, d_k \right>_K $, and $\gamma \coloneqq \| d_k \|_K ^2$. We remark that $p, \alpha, \beta$ are functions of $y$.

Because 
$\frac{\partial g}{\partial c}= \frac{(p\beta -  q\alpha)c - (q\beta - p\gamma)}{(\alpha c^2 + 2 \beta c + \gamma)^{\frac{3}{2}} }, $
for a fixed $y$, $g(c, y)$ takes the extreme value at $c = \frac{q\beta - p\gamma}{p\beta -  q\alpha}$ if $p\beta -  q\alpha \neq 0$.  

We remark that $q > 0$. At each iteration, we maximize $\frac{\left< -( \nu_t - \mu_K), d_k + c v \right>_K}{\| \mu_K -\nu_t \|_K \| d_k +c v  \|_K}$, and thus, $\frac{\left< -( \nu_t - \mu_K), d_k \right>_K}{\| \mu_K -\nu_t \|_K \| d_k   \|_K} \geq \frac{\left< -( \nu_t - \mu_K), d_1 \right>_K}{\| \mu_K - \nu_t \|_K \| d_1  \|_K}$. In addition, because $d_1 =\argmax_{v\in V-\nu_t} \frac{ \left< -(\nu_t - \mu_K),   v \right>_K}{\| v \|_K \| \nu_t - \mu_K \|_K}$ and $\mu_K \in \overline{\mathrm{conv}(V)}$, 
\[ \frac{\left< -( \nu_t - \mu_K), d_1 \right>_K}{\| d_1 \|_K \| \nu_t - \mu_K \|_K} \geq  \frac{\left< -( \nu_t - \mu_K), u_1 \right>_K}{\| u_1 \|_K \| \nu_t - \mu_K \|_K}  \geq \frac{\left< -( \nu_t - \mu_K), \mu_K - \nu_t \right>_K}{\| u_1 \|_K \| \nu_t - \mu_K \|_K}  >  0 ,\]
where $u_1 =\argmax_{v \in V - \nu_t} \left< -( \nu_t - \mu_K), v\right>_K$. Therefore, $q > 0$.

We fix $y\in \Omega$ and discuss the maximum point of $g(c ,y)$ in $c \geq 0$. Proposition~\ref{prop3-4} describes the possibility of the maximum point of $g(c, y)$ for each $y\in \Omega$, and  $g(c, y)$ never takes its maximum at $c=\infty$.

\begin{proposition}\label{prop3-4}
For each $y\in \Omega$, $g(c, y)$ takes its maximum value in $c \geq 0$ at $c=0$ or $c= \frac{q\beta - p\gamma}{p\beta -  q\alpha}$. 
\end{proposition}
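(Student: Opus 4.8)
The plan is to show that $c\mapsto g(c,y)$ attains its supremum on $[0,\infty)$ and then to locate the maximiser using the derivative $\partial g/\partial c=\big((p\beta-q\alpha)c-(q\beta-p\gamma)\big)/(\alpha c^2+2\beta c+\gamma)^{3/2}$ computed above; the proposition, including the assertion that the maximum is never attained ``at $c=\infty$'', is then immediate.

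First I would set aside the degenerate configurations. If $\alpha=\|K(\cdot,y)-\nu_t\|_K^2=0$ then $v:=K(\cdot,y)-\nu_t=0$, so $p=\beta=0$, $g(\cdot,y)$ is the constant $q/\sqrt{\gamma}$, and the maximum is at $c=0$. By Cauchy--Schwarz $\beta^2\le\alpha\gamma$, so the denominator $\alpha c^2+2\beta c+\gamma=\|d_k+cv\|_K^2$ can vanish at some $c\ge0$ only when $d_k$ is a negative multiple of $v$, in which case $d_k+cv$ is a scalar multiple of $v$ for every $c$ and a direct computation shows $g(\cdot,y)$ takes only two opposite values, the larger being $g(0,y)=q/\sqrt{\gamma}>0$; so again the maximum is at $c=0$. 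Hence I may assume $\alpha>0$ and $\alpha c^2+2\beta c+\gamma>0$ for all $c\ge0$, so that $g(\cdot,y)$ is continuous (indeed $C^1$) on $[0,\infty)$.

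The key step is the inequality $\lim_{c\to\infty}g(c,y)=p/\sqrt{\alpha}\le q/\sqrt{\gamma}=g(0,y)$. To see it, rewrite $p/\sqrt{\alpha}=\|\mu_K-\nu_t\|_K\,\mathrm{align}\!\left(-(\nu_t-\mu_K),K(\cdot,y)-\nu_t\right)$ and $q/\sqrt{\gamma}=\|\mu_K-\nu_t\|_K\cos\theta_{t,k}$; since $K(\cdot,y)-\nu_t\in V-\nu_t$ and $d_1$ maximises $\mathrm{align}(-(\nu_t-\mu_K),\cdot)$ over $V-\nu_t$, we get $\mathrm{align}(-(\nu_t-\mu_K),K(\cdot,y)-\nu_t)\le\cos\theta_{t,1}$, and combined with $\cos\theta_{t,1}\le\cos\theta_{t,k}$ (noted just before the proposition), dividing by $\|\mu_K-\nu_t\|_K>0$ gives the claim. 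Now $g(\cdot,y)$ is continuous, bounded above on $[0,\infty)$ (by Cauchy--Schwarz $g(c,y)\le\|\mu_K-\nu_t\|_K$) and has a limit at $\infty$; if it failed to attain its supremum, any maximising sequence would have no bounded subsequence (a convergent one would yield a maximiser by continuity), hence would tend to $\infty$, giving $\sup_{c\ge0}g(c,y)=p/\sqrt{\alpha}\le g(0,y)$ and therefore attainment at $c=0$ --- a contradiction. So the maximum is attained at some finite $c_0\ge0$. If $c_0=0$ we are done; if $c_0>0$ then $\partial g/\partial c$ vanishes at the interior point $c_0$, which by the derivative formula forces $c_0=(q\beta-p\gamma)/(p\beta-q\alpha)$ when $p\beta-q\alpha\ne0$, and otherwise forces $q\beta-p\gamma=0$, making $g(\cdot,y)$ constant so that the maximum is also attained at $c=0$.

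The step I expect to be the main obstacle is the inequality $\lim_{c\to\infty}g(c,y)\le g(0,y)$: this is exactly what rules out $g(\cdot,y)$ climbing monotonically toward $p/\sqrt{\alpha}$ without ever attaining it, and it hinges on $d_1$ being the single vertex direction best aligned with $-(\nu_t-\mu_K)$ together with the monotonicity of $\cos\theta_{t,k}$ in the round index. Granting it, the remainder is a short compactness argument plus the derivative already displayed in the text.
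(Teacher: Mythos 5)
Your proof is correct and follows essentially the same route as the paper's: the decisive step in both is ruling out a maximum ``at $c=\infty$'' via the inequality $p/\sqrt{\alpha}\le q/\sqrt{\gamma}$, obtained from the optimality of $d_1$ over $V-\nu_t$ together with the monotone increase of $\mathrm{align}(-(\nu_t-\mu_K),d_k)$ in $k$, after which the first-order condition pins the maximiser to $c=0$ or the critical point. Your treatment of the degenerate cases ($\alpha=0$, vanishing denominator) and the maximising-sequence argument for attainment are welcome extra rigor, but they do not change the approach.
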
 
\begin{proof}
First, we consider the case $p\beta -  q\alpha \neq 0$. We divide this into cases $p \geq 0$ and $p < 0$ to clarify the maximum point of $g(c, y)$. \\
(i)\ $p < 0$\\
Because $g(0, y)= \frac{q}{\sqrt{\gamma}} >0$, $\lim_{c \to -\infty} g(c, y)= -\frac{p}{\sqrt{\alpha}} > 0$, and $\lim_{c\to \infty} g(c, y)= \frac{p}{\sqrt{\alpha}}  < 0$, $\argmax_{c\geq 0} g(c, y)$ is $c=0$ or $c=\frac{q\beta - p\gamma}{p\beta -  q\alpha}$. \\
(ii)\ $p \geq 0$\\
Because $g(0, y)= \frac{q}{\sqrt{\gamma}} >0$, $\lim_{c \to -\infty} g(c, y)= -\frac{p}{\sqrt{\alpha}} \leq 0$, and $\lim_{c\to \infty} g(c, y)= \frac{p}{\sqrt{\alpha}}  \geq 0$,  $\argmax_{c\geq 0} g(c, y)$ is $c=0$ or $c=\infty$ or $c=\frac{q\beta - p\gamma}{p\beta -  q\alpha}$. We show that the following never holds true: $\argmax_{c\geq 0} g(c, y) = \infty$. If $\argmax_{c\geq 0} g(c, y) = \infty$, $g(\infty, y) > g(0, y)$, that is, 
\[ \frac{\left< -( \nu_t - \mu_K), K(\cdot, y) - \nu_t \right>_K}{\| \mu_K - \nu_t \|_K \| K(\cdot, y) - \nu_t  \|_K} > \frac{\left< -( \nu_t - \mu_K), d_k \right>_K}{\| \mu_K - \nu_t \|_K \| d_k  \|_K} .\]
In addition, because $\frac{\left< -( \nu_t - \mu_K), d_k \right>_K}{\| \mu_K - \nu_t \|_K \| d_k  \|_K}$  monotonically increases, 
\[   \frac{\left< -( \nu_t - \mu_K), K(\cdot, y) - \nu_t \right>_K}{\| \mu_K - \nu_t \|_K \| K(\cdot, y) - \nu_t  \|_K} > \frac{\left< -( \nu_t - \mu_K), d_1 \right>_K}{\| \mu_K -\nu_t \|_K \| d_1  \|_K} .\]
This is a contradiction because $d_1 = \argmax_{v\in V- \nu_t} \frac{\left< -(\nu_t - \mu_K),   v \right>_K}{\| \mu_K - \nu_t \|\| v\|_K }$. Thus, $\argmax_{c\geq 0} g(c, y)$ is $c=0$ or $c=\frac{q\beta - p\gamma}{p\beta -  q\alpha}$.

If  $p\beta -  q\alpha = 0$, $g(c, y)$ is monotonically decreasing or increasing. Thus, $g(c, y)$  takes its maximum at $c=0$ or $c= \infty$. However, by the above argument, it is evident that $g(c, y)$ never takes its maximum at $c= \infty$. Thus, it takes its maximum at $c= 0$.
\end{proof}

\begin{remark}\label{rem3-2}
We give some  remarks on the proof of Proposition~\ref{prop3-4}.
\begin{enumerate}
\renewcommand{\labelenumi}{(\roman{enumi})}
\item When $p < 0$, if $g(c, y)$ takes the minimum value at $\frac{q\beta - p\gamma}{p\beta -  q\alpha}$, the minimum value is negative. Therefore, if $\frac{q\beta - p\gamma}{p\beta -  q\alpha} > 0$, we can distinguish if the extreme point is the maximum or minimum by the sign.
\item When $p \geq 0$, if $\frac{q\beta - p\gamma}{p\beta -  q\alpha}\geq 0$, this is the maximum point. If $\frac{q\beta - p\gamma}{p\beta -  q\alpha}\geq 0$ and this is the minimum value, $g(c, y) < 0$ for $c < 0$. This is a contradiction because $g(0, y) > 0$.
\end{enumerate}
\end{remark}

Here, we observe \autoref{greedy cos} from another perspective to help with the analysis of the later results. 

Let $d_k$ be the $k$-th approximate direction constructed in \autoref{greedy cos}. We define $P(d)$ for $d \in \mathcal{H}_K$ as the orthogonal projection of $-(\nu_t - \mu_K)$ to the line $\{ \alpha d \mid \alpha \in \zissu \}$. We note that if we replace $d_k$ with $\alpha d_k (\alpha > 0)$ on line \autoref{line1} of \autoref{greedy cos}, $d_{k+1}$ can be written as $\alpha d_{k+1}$. This is obvious because $ \frac{\left< -( \nu_t - \mu_K), d_k + c v \right>_K}{\| \mu_K -\nu _t \|_K \| d_k +c v  \|_K} =  \frac{\left< -( \nu_t - \mu_K), \alpha d_k + \alpha c v \right>_K}{\| \mu_K -\nu _t \|_K \| \alpha d_k + \alpha c v  \|_K}$. Therefore, even if we use $P(d_k)$  instead of $d_k$ in each iteration, the output $g_t$ does not differ from that of the original algorithm. In addition, because $\cos \theta_{t, k} = \frac{\sqrt{\|  -(\nu_t - \mu_K)\|_K ^2 - \|  -(\nu_t - \mu_K)  - P(d_k) \|_K ^2 }}{\|  -(\nu_t - \mu_K)\|_K}$, \autoref{greedy cos} can be interpreted as the greedy minimization of $\|  -(\nu_t - \mu_K)  - P(d_k) \|_K$. Therefore, we can reinterpret the update of \autoref{greedy cos} as follows:
\begin{enumerate}
\item $c_k, v_k \leftarrow  \argmin_{c \geq 0, v \in V- \nu_t}\|  -(\nu_t - \mu_K) -P(d_k + c v) \|_K$.
\item $d_{k+1} =P( d_k + c_k v_k )$.
\end{enumerate}

Now, we consider if there exists $y$ such that the function $g(c, y)$ takes its maximum at $c= \frac{q\beta - p\gamma}{p\beta -  q\alpha} > 0 $. The following Theorem~\ref{thm3-1} suggests a positive result. 
\begin{theorem}\label{thm3-1}
There exist $y\in \Omega$ and $c > 0$ such that $v= K(\cdot, y) - \nu_t $ and $c $ achieve $\argmax_{c \geq 0, v\in V - \nu_t} \frac{\left< -( \nu_t - \mu_K), d_k + c v \right>_K}{\| \mu_K -\nu _t \|_K \| d_k +c v  \|_K}$.
\end{theorem}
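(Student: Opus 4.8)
The plan is to show that, starting from the current direction $d_k$, one can strictly improve the alignment with $-(\nu_t-\mu_K)$ by appending a suitable atom direction, so that the maximizer on line~\ref{line1} of \autoref{greedy cos} cannot have $c=0$. Write $g^\star\coloneqq \mu_K-\nu_t$; by the standing assumption $\nu_t\neq\mu_K$ we have $g^\star\neq 0$, and since $q=\langle g^\star,d_k\rangle_K>0$ (established above) we have $d_k\neq 0$. Let $P(d_k)=\tfrac{q}{\|d_k\|_K^2}d_k$ be the orthogonal projection of $g^\star$ onto $\mathrm{span}(d_k)$ and set $r_k\coloneqq g^\star-P(d_k)$. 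A direct computation gives $\langle r_k,d_k\rangle_K=0$, hence $\langle r_k,g^\star\rangle_K=\langle r_k,P(d_k)+r_k\rangle_K=\|r_k\|_K^2$; moreover $r_k\neq 0$, since $r_k=0$ would force $g^\star$ parallel to $d_k$ and, as $q>0$, $\cos\theta_{t,k}=1$, contradicting our assumption $\cos\theta_{t,k}<1$.

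The first key step is to exhibit an atom whose direction has positive inner product with the residual $r_k$. Since $\mu_K\in M=\overline{\mathrm{conv}(V)}$ and the continuous linear functional $\langle r_k,\cdot\rangle_K$ has the same supremum over $V$, over $\mathrm{conv}(V)$, and over its closure $M$, we obtain $\sup_{v\in V}\langle r_k,v-\nu_t\rangle_K\geq\langle r_k,\mu_K-\nu_t\rangle_K=\langle r_k,g^\star\rangle_K=\|r_k\|_K^2>0$. Because $K$ is continuous and $\Omega$ is compact, the map $y\mapsto\langle r_k,K(\cdot,y)-\nu_t\rangle_K=r_k(y)-\langle r_k,\nu_t\rangle_K$ is continuous and attains its maximum at some $y_0\in\Omega$; thus $v_0\coloneqq K(\cdot,y_0)-\nu_t\in V-\nu_t$ satisfies $\langle r_k,v_0\rangle_K\geq\|r_k\|_K^2>0$.

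Next I would differentiate $g(\cdot,y_0)$ at $c=0$. Using the formula $\frac{\partial g}{\partial c}=\frac{(p\beta-q\alpha)c-(q\beta-p\gamma)}{(\alpha c^2+2\beta c+\gamma)^{3/2}}$ recorded above, at $c=0$ it equals $\frac{p\gamma-q\beta}{\gamma^{3/2}}$; since $\langle r_k,v_0\rangle_K=\langle g^\star-\tfrac{q}{\gamma}d_k,v_0\rangle_K=p-\tfrac{q\beta}{\gamma}$ we get $p\gamma-q\beta=\gamma\,\langle r_k,v_0\rangle_K$, so $\frac{\partial g}{\partial c}(0,y_0)=\frac{\langle r_k,v_0\rangle_K}{\sqrt{\gamma}}>0$. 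Hence $g(c,y_0)>g(0,y_0)$ for all small $c>0$, so $c=0$ is not the maximizer of $c\mapsto g(c,y_0)$; by Proposition~\ref{prop3-4} the maximizer over $c\geq0$ must then be $c_0\coloneqq\frac{q\beta-p\gamma}{p\beta-q\alpha}$ (evaluated at $y_0$), and being the (feasible) maximizer it lies in $[0,\infty)$ and is nonzero, so $c_0>0$. Since $g(0,y)=q/\sqrt{\gamma}$ is independent of $y$, this already gives $\sup_{c\geq 0,\,v\in V-\nu_t}g(c,v)\geq g(c_0,y_0)>q/\sqrt{\gamma}$.

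Finally I would conclude the theorem: the supremum on line~\ref{line1} is attained—$g$ is continuous on the relevant part of $[0,\infty)\times\Omega$, and by the discussion preceding Proposition~\ref{prop3-4} the maximum in $c$ is never pushed to $c=\infty$, so one may restrict to a compact subset of the domain away from the thin set where $d_k+cv=0$—say at $(y^\star,c^\star)$; then $g(c^\star,y^\star)\geq g(c_0,y_0)>q/\sqrt{\gamma}=g(0,y^\star)$ forces $c^\star\neq 0$, and Proposition~\ref{prop3-4} again identifies $c^\star$ with the positive critical point $\frac{q\beta-p\gamma}{p\beta-q\alpha}$. I expect the main obstacle to be exactly this last attainment/compactness step: the optimal $c$ for a given atom is $\frac{q\beta-p\gamma}{p\beta-q\alpha}$, which can blow up as the atom direction degenerates ($K(\cdot,y)\to\nu_t$), so a clean argument probably needs the reparametrization $c\mapsto c/(1+c)\in[0,1)$ together with the uniform bound $p/\sqrt{\alpha}\leq\langle g^\star,d_1\rangle_K/\|d_1\|_K\leq q/\sqrt{\gamma}$ on the limiting value of $g$ at $c=\infty$ (using that $d_1$ maximizes this ratio over $V-\nu_t$ and that $\cos\theta_{t,k}$ is nondecreasing in $k$); the residual argument and the derivative computation themselves are routine.
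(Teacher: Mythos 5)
Your proposal is correct and follows essentially the same route as the paper's proof: both reduce the claim to showing $\partial g/\partial c|_{c=0}>0$ for some $y$, rewrite that condition as the residual $-(\nu_t-\mu_K)-P(d_k)$ having positive inner product with some atom direction $K(\cdot,y)-\nu_t$, and produce such a $y$ from $\mu_K\in\overline{\mathrm{conv}(V)}$ before invoking Proposition~\ref{prop3-4}. Your additional care about attainment of the supremum and the explicit check that $r_k\neq 0$ are refinements the paper leaves implicit, but they do not change the argument.
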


\begin{proof}
From Proposition~\ref{prop3-4}, it is sufficient to show that there exists $y \in \Omega$ such that $\frac{\partial g(c, y)}{\partial c}|_{c=0} >  0 $; in this case, $g(c, y)$ does not take its maximum at $c=0$. This can be written as 
\[  p \gamma - q\beta =  \left< -( \nu_t - \mu_K),   K(\cdot, y) - \nu_t\right>_K \| d_k \|_K ^2  -   \left< K(\cdot, y) - \nu_t, d_k \right>_K  \left< -(\nu_t -\mu_K), d_k \right>_K >0 .\]

By the aforementioned argument, we can replace $d_k$ with $P(d_k)$, and it suffices to show that there exists $y\in \Omega$ such that the following inequality holds:
\begin{equation}\label{eq-thm3-1-1}
  \left< -( \nu_t - \mu_K),   K(\cdot, y) - \nu_t\right>_K \| P(d_k) \|_K ^2  -   \left< K(\cdot, y) - \nu_t, P(d_k) \right>_K  \left< -(\nu_t -\mu_K), P(d_k) \right>_K >0 . 
  \end{equation}

Because $P(d_k)$ is orthogonal to $-(\nu_t - \mu_K) -P(d_k)$, it holds that $ \| P(d_k) \|_K ^2  = \left< -(\nu_t -\mu_K), P(d_k) \right>_K$. By substituting this into  (\ref{eq-thm3-1-1}), we only need to show that there exists $y \in \Omega$ such that
\[  \left< -( \nu_t - \mu_K) - P(d_k),   K(\cdot, y) - \nu_t\right>_K  > 0 . \]
 By using the property of orthogonal projection, we have $\|  -( \nu_t - \mu_K)\|_K ^2 > \left< -( \nu_t - \mu_K), P(d_k) \right>_K$ and 
 \[  \left< -( \nu_t - \mu_K) - P(d_k),   -( \nu_t - \mu_K) \right>_K > 0 . \]
 Because $\mu_K \in \overline{\mathrm{conv}(M)}$, we have 
 \[ \max_{y\in \Omega} \left< -( \nu_t - \mu_K) - P(d_k),   K(\cdot, y) - \nu_t\right>_K \geq   \left< -( \nu_t - \mu_K) - P(d_k),  \mu_K - \nu_t \right>_K > 0  \].
 
 This completes the proof. 
\end{proof}

By Theorem~\ref{thm3-1}, we can ensure that $\cos \theta_{t, k}$ monotonically increases, representing the significant difference between this and \autoref{positive matching pursuit}. Because we cannot ensure that $\cos \theta_{t, k}$ monotonically increases in \autoref{positive matching pursuit}, if we set $\delta > 0$, we cannot control when the iteration stops. Therefore, it is difficult to control the total number of nodes. In contrast, in \autoref{greedy cos}, because $\cos \theta_t$ monotonically increases, the maximization of $\cos \theta_t$ is more efficient, and it is easy to estimate the total number of nodes if we set $\delta=0$.

We describe the concrete optimization procedure of $\argmax_{c > 0, y \in \Omega} g(c, y)$. It is difficult to solve $\argmax_{c > 0, y\in \Omega} g(c, y)$ directly using ordinary optimization methods. Therefore,  we prepare a sufficient number of candidate points in $\Omega$. This method is also used in kernel herding to maximize $\left< -(\nu_t - \mu_K) , v - \nu_t \right>_K$. We restrict the candidate points to points that satisfy $\frac{q\beta - p\gamma}{p\beta -  q\alpha}> 0$ and $\frac{q\beta - p\gamma}{p\beta -  q\alpha} p + q \geq 0$. This is due to Proposition~\ref{prop3-4}  and Remark~\ref{rem3-2}.  Then, we select the point that maximizes $g(\frac{q\beta - p\gamma}{p\beta -  q\alpha}, y)$ from the restricted candidate set. The algorithm can be summarized as follows:
\begin{description}
  \item[1.] Prepare a candidate set of sample points.
  \item[2.] Restrict the candidate points to those satisfying  $\frac{q\beta - p\gamma}{p\beta -  q\alpha}> 0$ and $\frac{q\beta - p\gamma}{p\beta -  q\alpha} p + q \geq 0$.
  \item[3.] Select the point $y_k \in \Omega$ that maximizes $g(\frac{q\beta - p\gamma}{p\beta -  q\alpha}, y)$ from the restricted candidate set and let $v_k = K(\cdot, y_k) - \nu_t$ and $c = g(\frac{q\beta - p\gamma}{p\beta -  q\alpha}, y_k)$, where $\alpha, \beta, \gamma, p, q$ are computed for $y_k$. 
\end{description}

\begin{remark}
We note that \autoref{greedy cos} is not computationally expensive. \autoref{greedy cos} requires the computation of $p, q, \alpha, \beta, \gamma$ at each iteration along with the inner products and norms. By memorizing the previous values of $p, q, \alpha, \beta, \gamma$ , we can reduce the cost of such computation; moreover, the computational complexity of \autoref{greedy cos} is not significantly different from that of ordinary kernel herding. Concretely, the computational cost at each iteration is $O(m+t+k)$, where $m$ is the number of candidate points. 
\end{remark}

Next, we analyze the convergence speed of  \autoref{greedy cos}. The following theorem shows that the approximation error between $-(\nu_t - \mu_K)$ and $P(d_k)$, where $d_k$ is constructed in \autoref{greedy cos}, converges with $O(1/k)$ speed. This means the direction of $d_k$ converges to $-(\nu_t - \mu_K)$ and it is sufficient because only the $\cos \theta_i$ influences the  convergence speed. 
\begin{theorem}
Let $d_k$ be the $k$-th approximate direction constructed in \autoref{greedy cos} and let $P(d)$ be the orthogonal projection of $-(\nu_t - \mu_K)$ to the line $\{ \alpha d \mid \alpha \in \zissu \}$. Then, it holds that
\[ \| -(\nu_t - \mu_K) - P(d_k) \|_K ^2 = O(1/k)  \].

\begin{proof}
At first, we note that $P(d_k) = \alpha d_k$ with $\alpha > 0$ because the inner product between $d_k$ and $ -(\nu_t - \mu_K)$ is positive. 

We define $\eta_k \coloneqq \|  -(\nu_t - \mu_K)  - P(d_k) \|_K ^2$. Because we select the optimal vertex $v_k$ and coefficient $c_k$ over $v\in V- \nu_t$ and $c \geq0 $, $\eta_{k} - \eta_{k+1}$ is larger than in any other algorithm that adds a vertex $v$ to $P(d_k)$ with a positive coefficient. Therefore,  by the estimate in the proof of \autoref{prop3-3}, it holds that
\[ \eta_{k+1} \leq  \eta_{k}  - C {\eta_k ^2 }, \]
where $C$ is a positive coefficient. Therefore, we can derive the $O(1/k)$ convergence rate by Lemma~\ref{lem3-1}. 
\end{proof}
\end{theorem}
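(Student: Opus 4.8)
The plan is to follow the projection reinterpretation of \autoref{greedy cos} recorded just above the statement and reduce the claim to the scalar recursion of Lemma~\ref{lem3-1}. First I would observe that the orthogonal projection has the form $P(d_k)=\alpha_k d_k$ with $\alpha_k>0$: by Theorem~\ref{thm3-1} the sequence $\cos\theta_{t,k}$ increases, and $\cos\theta_{t,1}=\mathrm{align}(-(\nu_t-\mu_K),d_1)>0$ by the same argument that establishes $q>0$ before Proposition~\ref{prop3-4} (here $q=\left< -(\nu_t-\mu_K),d_k\right>_K$), so $\left< -(\nu_t-\mu_K),d_k\right>_K>0$ and $P(d_k)$ is a well-defined positive multiple of $d_k$. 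As already noted before the statement, the objective on line~\ref{line1} is scale invariant in $d_k$, so replacing $d_k$ by $P(d_k)$ at every round does not change the output $g_t$; I may therefore assume the update reads $d_{k+1}=P(d_k+c_kv_k)$ with $(c_k,v_k)\in\argmin_{c\geq 0,\,v\in V-\nu_t}\|-(\nu_t-\mu_K)-P(d_k+cv)\|_K$, which is precisely the reinterpreted step.

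Set $\eta_k\coloneqq\|-(\nu_t-\mu_K)-P(d_k)\|_K^2$, which is nonincreasing in $k$. The core of the argument is a one-step decrease estimate \[\eta_{k+1}\;\leq\;\eta_k-C\,\eta_k^2\] for a constant $C>0$ independent of $k$. To obtain it, note that by optimality of $(c_k,v_k)$ the decrease $\eta_k-\eta_{k+1}$ is at least that produced by \emph{any} admissible pair $(c,v)$ with $c\geq 0$, $v\in V-\nu_t$ added to $P(d_k)$. I would insert the vertex and coefficient that positive matching pursuit uses in case (A) of the proof of Proposition~\ref{prop3-3}: $v=v_k^{\star}-\nu_t$ with $v_k^{\star}\in\argmax_{v\in V}\left<\mu_K-\nu_t-P(d_k),v\right>_K$ and the exact-line-search coefficient $c=\left<\mu_K-\nu_t-P(d_k),\,v_k^{\star}-\nu_t\right>_K/\|v_k^{\star}-\nu_t\|_K^2$. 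The computation in case (A) of that proof then applies verbatim with $P(d_k)$ in place of its iterate $d_k$, giving $\eta_{k+1}\leq\eta_k-\eta_k^2/\bigl(\|v_k^{\star}-\nu_t\|_K^2(\|P(d_k)\|_K+1)^2\bigr)$.

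It remains to bound the denominator uniformly in $k$; this is even simpler here than in the remark following Proposition~\ref{prop3-3}. An orthogonal projection onto a line through the origin is norm-nonincreasing, so $\|P(d_k)\|_K\leq\|-(\nu_t-\mu_K)\|_K\leq\sqrt{2\|K\|_\infty}$, while for $v=K(x,\cdot)\in V$ and $\nu_t\in M=\overline{\mathrm{conv}(V)}$ one has $\|v-\nu_t\|_K\leq 2\sqrt{\|K\|_\infty}$. Hence $C\coloneqq 1/\bigl((\sup_v\|v-\nu_t\|_K^2)(\sup_k\|P(d_k)\|_K+1)^2\bigr)>0$ does not degenerate with $k$, and $\eta_{k+1}\leq\eta_k-C\eta_k^2$. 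Applying Lemma~\ref{lem3-1} with $a_k=\eta_k$ and $\gamma=C$ yields $\eta_k\leq\eta_0/(1+kC\eta_0)=O(1/k)$, which is the assertion.

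The step I expect to be the main obstacle is the comparison inequality of the second paragraph: one must argue carefully that the greedy maximization of $\cos\theta_{t,k+1}$ in \autoref{greedy cos} — equivalently the minimization of $\|-(\nu_t-\mu_K)-P(d_k+cv)\|_K$ over $(c,v)$ — genuinely dominates the particular forward step of Proposition~\ref{prop3-3} \emph{after composition with the projection $P$}, so that the quadratic decrease is inherited, and that the constant $C$ stays bounded away from $0$ uniformly in $k$ (this is what the norm bounds above ensure). Everything else is routine bookkeeping already performed in the proofs of Proposition~\ref{prop3-3} and Lemma~\ref{lem3-1}.
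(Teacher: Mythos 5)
Your proposal is correct and follows the same route as the paper: reinterpret the greedy $\cos$ step as minimizing $\eta_k=\|-(\nu_t-\mu_K)-P(d_k)\|_K^2$, dominate the optimal step by the forward step from case (A) of Proposition~\ref{prop3-3}, obtain $\eta_{k+1}\leq\eta_k-C\eta_k^2$, and conclude with Lemma~\ref{lem3-1}. You in fact supply details the paper leaves implicit (the orthogonality $\left<r_k,P(d_k)\right>_K=0$ making the case-(A) comparison go through, and the uniform bounds on $\|P(d_k)\|_K$ and $\|v-\nu_t\|_K$ keeping $C$ bounded away from zero), so the argument is a fleshed-out version of the paper's proof rather than a different one.
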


\subsection{fully-corrective greedy $\cos$ maximization method}\label{FC_gcos_section}

In  \autoref{greedy cos}, we solve the following optimization problem:
\[  \argmax_{c \geq 0, v\in V - \nu_t} \frac{\left< -( \nu_t - \mu_K), d_k + c v \right>_K}{\| \mu_K -\nu_t \|_K \| d_k +c v  \|_K} =   \argmax_{c \geq 0, v\in V-\nu_t} \frac{\left< -( \nu_t - \mu_K), d_k + c v \right>_K}{ \| d_k +c v  \|_K}   . \]
 To obtain a suitably sparse solution, we want to maximize the following quantity:
\[ \frac{ \left< -( \nu_t - \mu_K), d_k  \right>_K}{\| d_k   \|_K \|  -( \nu_t - \mu_K) \|_K} , \]
where $d_k = \sum_{i=1}^k c_k (K(x_i, \cdot) - \mu_K) \coloneqq  \sum_{i=1}^k c_i v_i  \ (c_1, \ldots, c_k \geq 0)$ . 

As we mentioned before, the $\cos$ maximization method can be interpreted as the minimization of $\| -(\nu_t  - \mu_K) - P(d_k)  \|_K$, where $P(d_k)$ is the orthogonal projection of $-(\nu_t - \mu_K)$ to the line $\{ \alpha d_k \mid \alpha \in \zissu \}$. Therefore, the maximization above can be interpreted as the following minimization problem: 
\begin{equation}\label{min-prob1}
 \min_{c_1, \ldots, c_k \geq 0}  \left\|  -(\nu_t - \mu_K) - P\left(\sum_{i=1}^k c_i v_i \right)  \right\|_K  .
 \end{equation}
In addition, the minimization problem (\ref{min-prob1}) is equivalent to the following problem:
\begin{equation}\label{min-prob2}
 \min_{c_1, \ldots, c_k \geq 0}  \left\|  -(\nu_t - \mu_K) - \sum_{i=1}^k c_i v_i  \right\|_K  ^2 .
 \end{equation}
This is because the solution of (\ref{min-prob2}) $d_k ^{*}$ is exactly the orthogonal projection of $-(\nu_t  - \mu_K)$; otherwise, it contradicts the fact that $d_k ^{*}$ is the minimizer. This reformulation is effective for tractability because $\max_{c_1, \ldots, c_k \geq 0} \frac{\left< -(\nu_t - \mu_K), \sum_{i=1}^k c_i v_i \right>_K}{\| \sum_{i=1}^k c_i v_i  \|_K \|  -(\nu_t - \mu_K) \|_K}$ is not easy to solve, but the optimization problem (\ref{min-prob2}) is a constrained convex optimization problem, which can be solved easily.

Therefore, we can construct a variant of \autoref{greedy cos} that optimizes the coefficients $c_1, \ldots, c_k$ in each iteration so that $\cos \theta_{t, k}$ is maximized. This corresponds to the fully-corrective algorithm, and  we can expect improvement of the approximation of the negative gradient.

  We summarize the modified greedy-cos algorithm as follows:

\begin{algorithm}[H]
 \caption{fully-corrective greedy maximization of $\cos \theta_t$}
 \label{fully-corrective}
 \begin{algorithmic}[1]
 \renewcommand{\algorithmicrequire}{\textbf{Input:}  }
 \renewcommand{\algorithmicensure}{\textbf{Output:}} 
\REQUIRE   Input $\nu_t \in M$, maximum number of rounds $K_{\max} \in \sizen$, and truncation parameter $\delta \geq 0$. 

\ENSURE    approximate direction $g_t$
\STATE $d_1 \leftarrow \argmax_{v\in V-\nu_t} \frac{ \left< -(\nu_t - \mu_K),   v \right>_K}{\| v \|_K \| \nu_t - \mu_K \|_K}$
    \FOR {$k=1 $ to $K_{\max}-1$}
  \STATE $c_k, v_k \leftarrow  \argmax_{c \geq 0, v\in V - \nu_t} \frac{\left< -( \nu_t - \mu_K), d_k + c v \right>_K}{\| \mu_K -\nu_t \|_K \| d_k +c v  \|_K}$ 
  \STATE $c_1, \ldots, c_k \leftarrow \argmax_{c_1, \ldots, c_k \geq 0}   \|  -(\nu_t - \mu_K) - \sum_{i=1}^k c_i v_i  \|_K ^2$ \quad(\ref{min-prob2})
    \STATE $d_k ^{'} \leftarrow \sum_{i=1}^k c_i v_i$
  \IF{$\mathrm{align}(-(\nu_t -\mu_K), d_k ^{'}) - \mathrm{align}(-(\nu_t -\mu_K), d_k ) > \delta$ }
 \STATE $d_{k+1} \leftarrow d_k ^{'}$
\ELSE 
\STATE  \textbf{break}
  \ENDIF
  \ENDFOR
  \STATE $K_t \leftarrow k$
    \RETURN $g_t = d_k/ \sum_{i=1}^k c_i$ 
 \end{algorithmic} 
 \end{algorithm}
 
We remark that we can also consider the fully-corrective positive matching pursuit \citep{locatello2017greedy}.
\begin{remark}\label{remark_difference}

One of the advantages of the \autoref{fully-corrective} approach is its computational efficiency. In this algorithm, we only need to solve the optimization problem with at most $K_{\max}$ variables. In contrast, the original fully-corrective kernel herding algorithm requires constrained quadratic programming in each iteration, and the number of variables increases as the number of iterations increases. 
\end{remark}

\subsection{Numerical experiments}\label{Numerical experiments}
We performed numerical experiments to evaluate the performance of the proposed methods. The proposed algorithms are the kernel herding algorithm with positive matching pursuit (\autoref{positive matching pursuit}), that with greedy maximization of $\cos \theta_t$ (\autoref{greedy cos}), and the fully-corrective variants of those algorithms. In the following, we abbreviate these algorithms as ``PMP'', ``gcos'', ``FC-PMP'' and ``FC-gcos'',  respectively. We compare the proposed methods with the existing methods, namely, vanilla kernel herding with step size determined by line search and $\frac{1}{t+1}$ and fully-corrective kernel herding.  For simplicity, we refer to these algorithms as ``linesearch'', ``eq-weight'', and ``FC'', respectively.

\subsubsection*{Gaussian kernel case}
First, we compare the kernel herding  \autoref{positive matching pursuit} and \autoref{greedy cos} to the ordinary kernel herding methods with respect to convergence speed of the worst-case error for a given number of nodes and time. The kernel is a Gaussian kernel $K(x, y)=\mathrm{exp}\left( -\| x - y \|^2  \right)$.  The domain is $\Omega= [-1, 1]^d$, and the density function of the  distribution on $\Omega$ is $\frac{1}{C} \mathrm{exp}\left( -\| x \|^2  \right)$, where $C= \int_{\Omega} \mathrm{exp}\left( -\| x \|^2  \right) \mathrm{d}x$. The experiments were performed for $d=2$. \autoref{gauss_2d} shows the results. It can be observed that the proposed algorithms outperformed line search. Thus, we can confirm that the approximation of the negative gradient $\mu_K - \nu_t$ accelerates convergence. In particular, \autoref{greedy cos} is better than \autoref{positive matching pursuit}. This is reasonable because $\cos \theta_t$ monotonically increases in \autoref{greedy cos}, while it does not in \autoref{positive matching pursuit}.

\begin{figure}[h]
    \begin{minipage}[t]{0.45\hsize}
        \center
        \captionsetup{width=.95\linewidth}
        \includegraphics[keepaspectratio, scale=0.4]{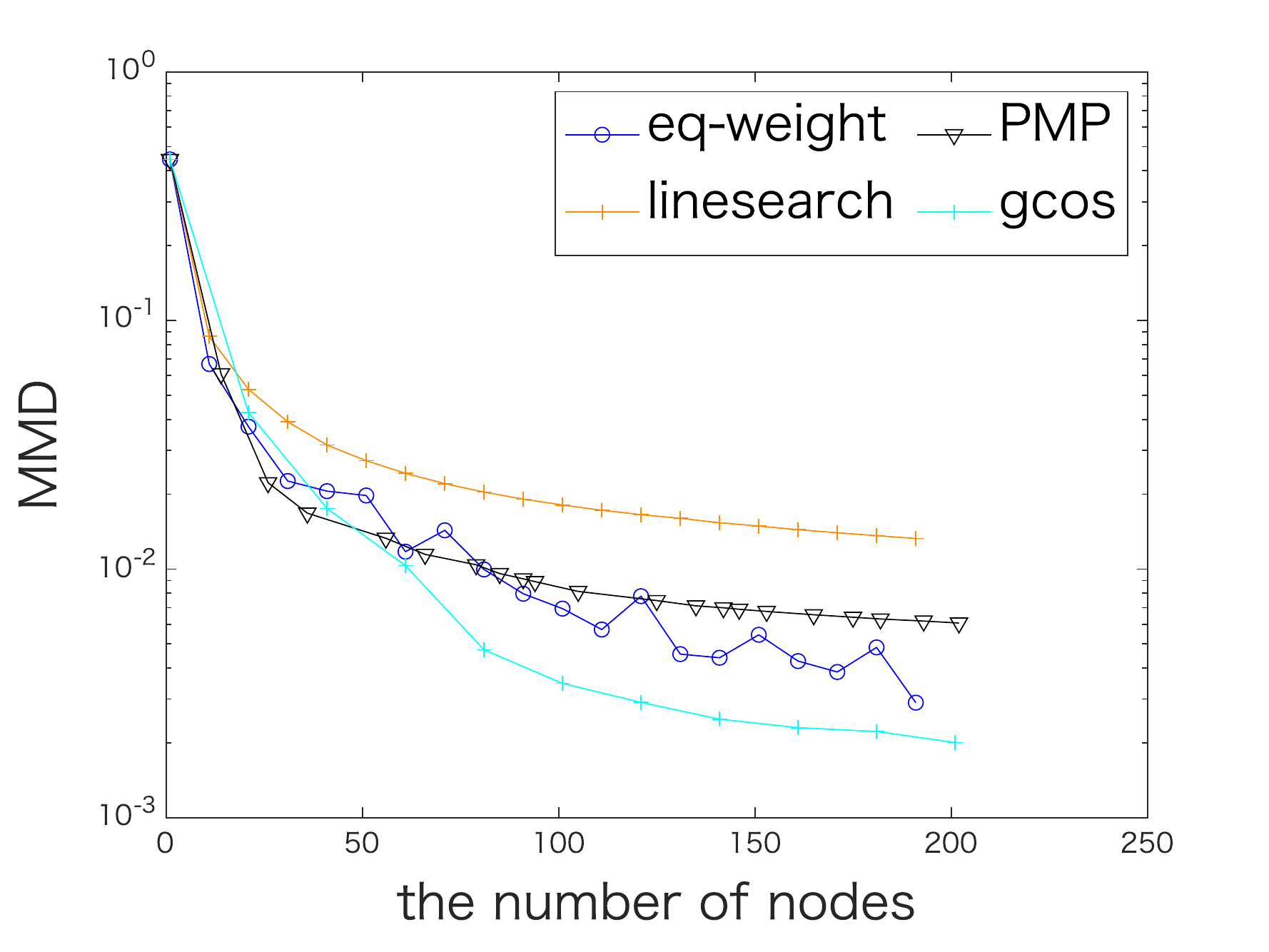}
        \subcaption{MMD for the number of nodes}
        \label{gauss_nodes}
    \end{minipage}
    \begin{minipage}[t]{0.45\hsize}
        \center
        \captionsetup{width=.95\linewidth}
        \includegraphics[keepaspectratio, scale=0.4]{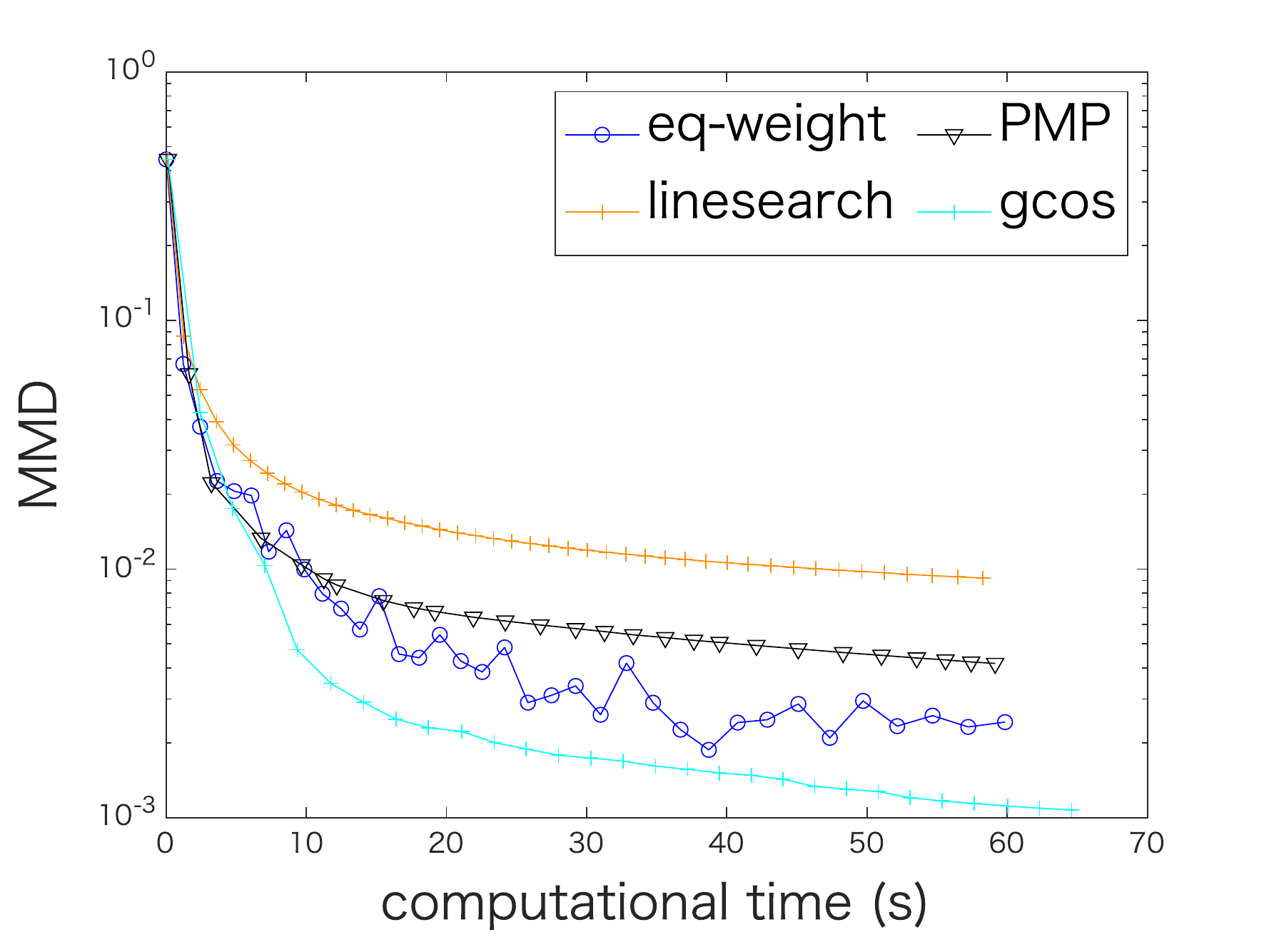}
        \subcaption{MMD for computation time}
        \label{gauss_time}
    \end{minipage}
    \caption{Gaussian kernel ($d=2$)}
    \label{gauss_2d}
\end{figure}
 \autoref{gauss_2d} shows the results. It can be observed that the proposed algorithms outperformed line search. Thus, we can confirm that the approximation of the negative gradient $\mu_K - \nu_t$ accelerates convergence. In particular, \autoref{greedy cos} is better than \autoref{positive matching pursuit}. This is reasonable because $\cos \theta_t$ monotonically increases in \autoref{greedy cos} while it does not in \autoref{positive matching pursuit}.  The proposed algorithms improved the convergence speed for the computation time in comparison to line search. Moreover, ``greedy-cos'' outperformed ``eq-weight''.

\subsubsection*{Mat\'{e}rn kernel case}
We consider the case that the kernel is the Mat\'{e}rn kernel, which has the form
\[ K(x, y)= \frac{2^{1-\nu}}{\Gamma(\nu)} \left( \sqrt{2\nu}\frac{\| x - y \|_2}{\rho} \right)^\nu B_{\nu} \left(\sqrt{2\nu} \frac{ \| x-y \|_2}{\rho} \right), \]
where $B_{\nu}$ is  the modified Bessel function of the second kind, and $\rho$ and $\nu$ are positive parameters. 
The Mat\'{e}rn kernel is closely related to Sobolev spaces and the RKHS $\mathcal{H}_K$ generated by the kernel with parameter $\nu$ norm equivalent to the Sobolev space with smoothness $s=\nu + \frac{d}{2}$ (see, e.g., \cite{kanagawa2018gaussian,wendland2004scattered}). In addition, the optimal convergence rate of the MMD in the Sobolev space with smoothness $s$ is known as $n^{-\frac{s}{d}}$ \citep{novak2006deterministic}. In this section, we use the parameter $(\rho, \nu)=(\sqrt{3}, \frac{3}{2})$ because the kernel has explicit forms with these parameters. 

The domain $\Omega$ is $[-1, 1]^d$, and the probability distribution is uniform. We compare the proposed methods with the existing methods with respect to the convergence of MMD for the number of nodes and computation time for $d=2, 3$. 

The results are shown in \autoref{Matern_gcos}. Regarding the convergence for the number of nodes, although the methods shown in the figures do not achieve the optimal convergence speed, we can see the fully-corrective approach outperforms the original methods. Moreover, the fully-corrective variants also achieve fast convergence speeds for the computation time. 

\begin{figure}[h]
    \begin{minipage}[t]{0.45\hsize}
        \center
        \captionsetup{width=.95\linewidth}
        \includegraphics[keepaspectratio, scale=0.4]{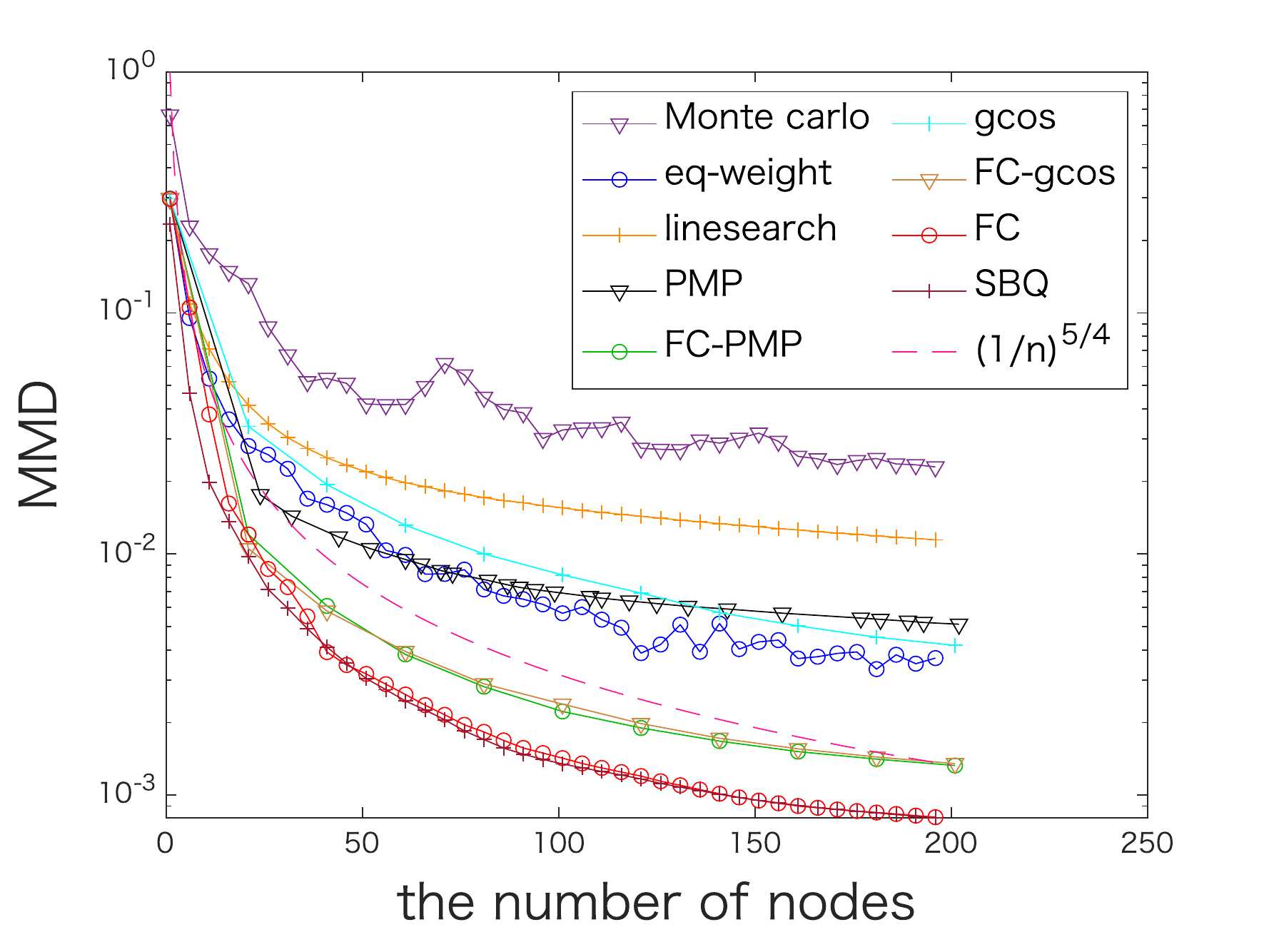}
        \subcaption{MMD for the number of nodes ($d=2$)}
        \label{matern_nodes_2d}
    \end{minipage}
    \begin{minipage}[t]{0.45\hsize}
        \center
        \captionsetup{width=.95\linewidth}
        \includegraphics[keepaspectratio, scale=0.4]{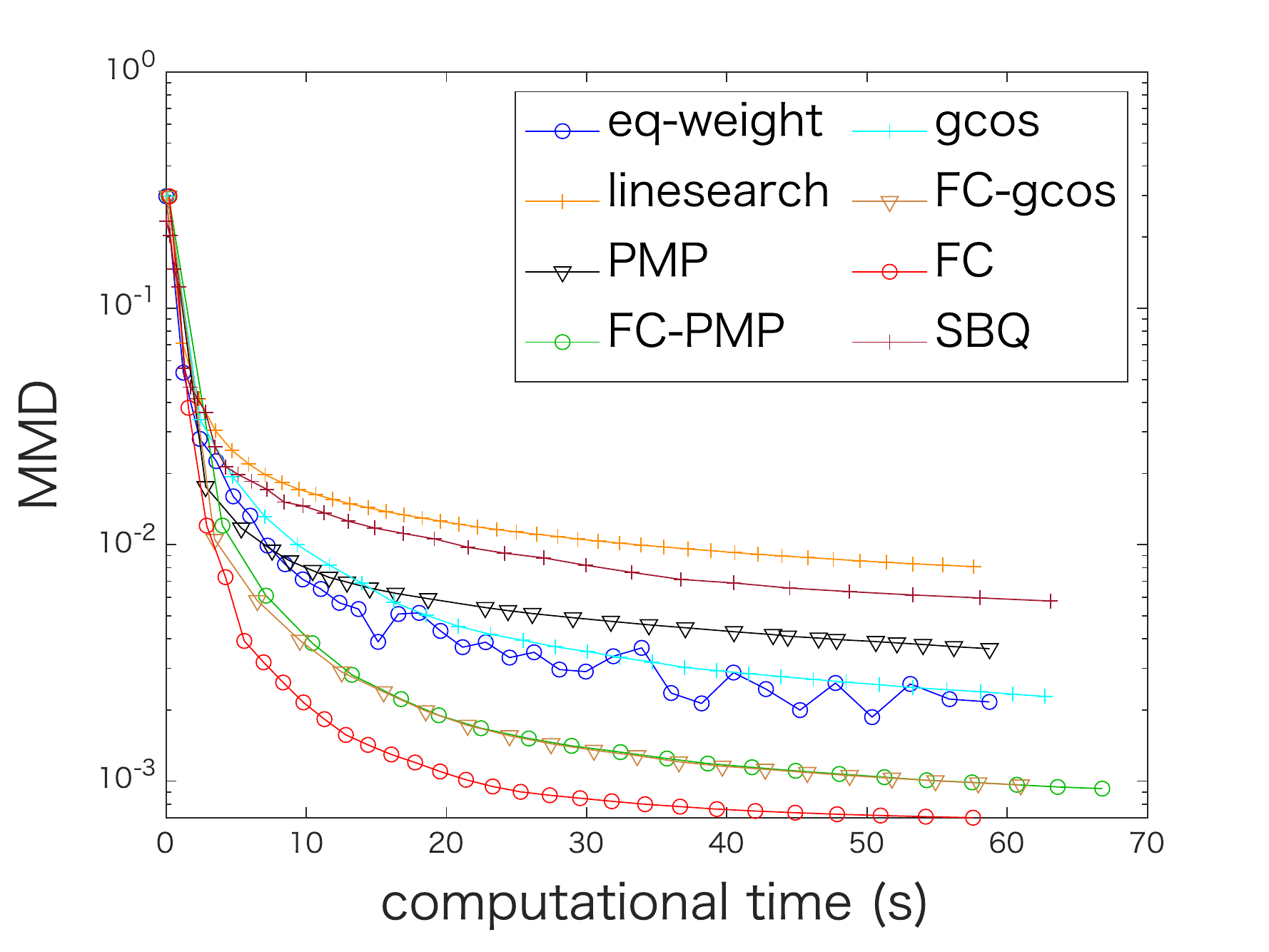}
        \subcaption{MMD for computation time ($d=2$)}
        \label{matern_time_2d}
    \end{minipage}\\
        \begin{minipage}[t]{0.45\hsize}
        \center
        \captionsetup{width=.95\linewidth}
        \includegraphics[keepaspectratio, scale=0.4]{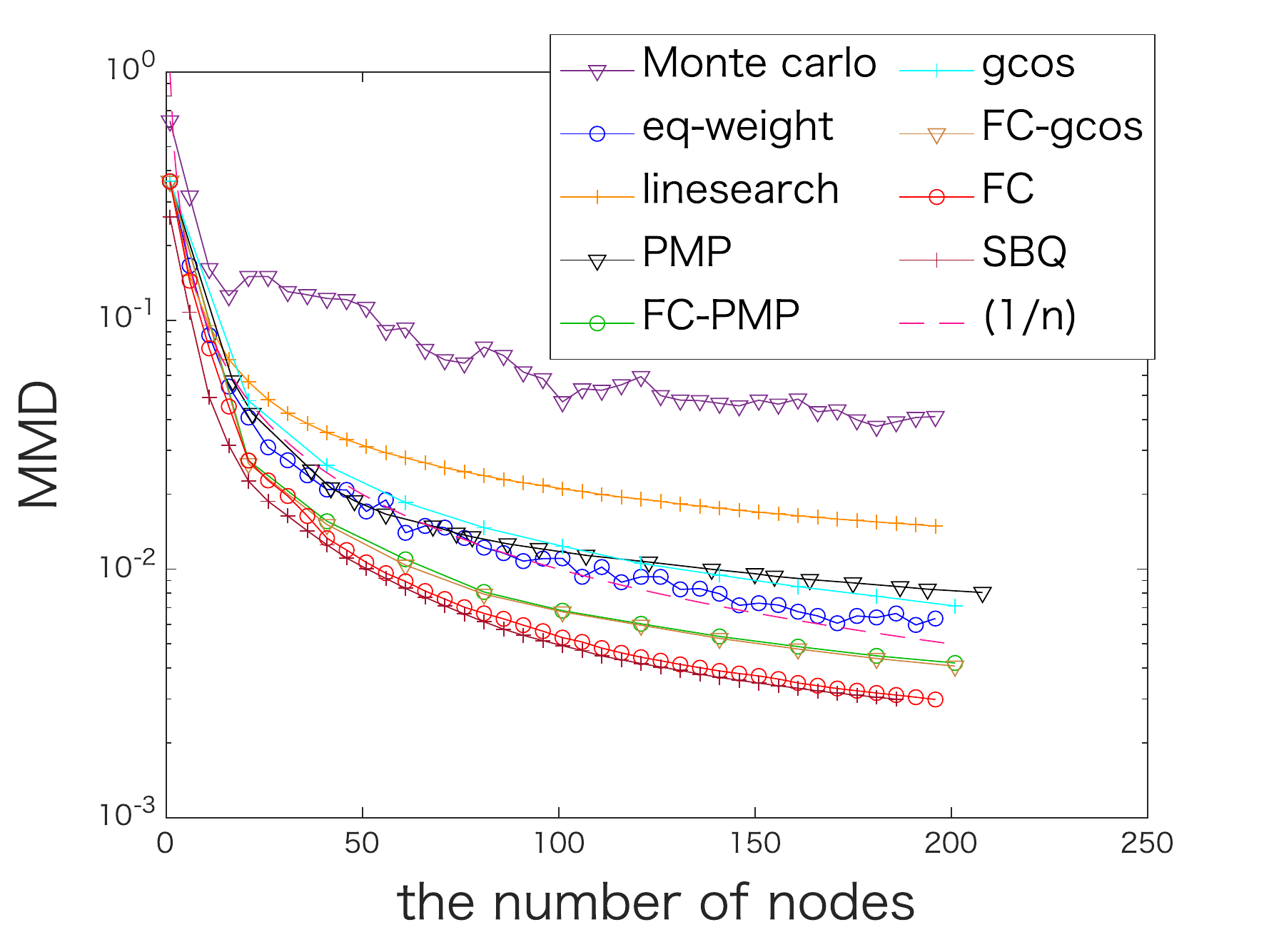}
        \subcaption{MMD for the number of nodes ($d=3$)}
        \label{matern_nodes_3d}
    \end{minipage}
    \begin{minipage}[t]{0.45\hsize}
        \center
        \captionsetup{width=.95\linewidth}
        \includegraphics[keepaspectratio, scale=0.4]{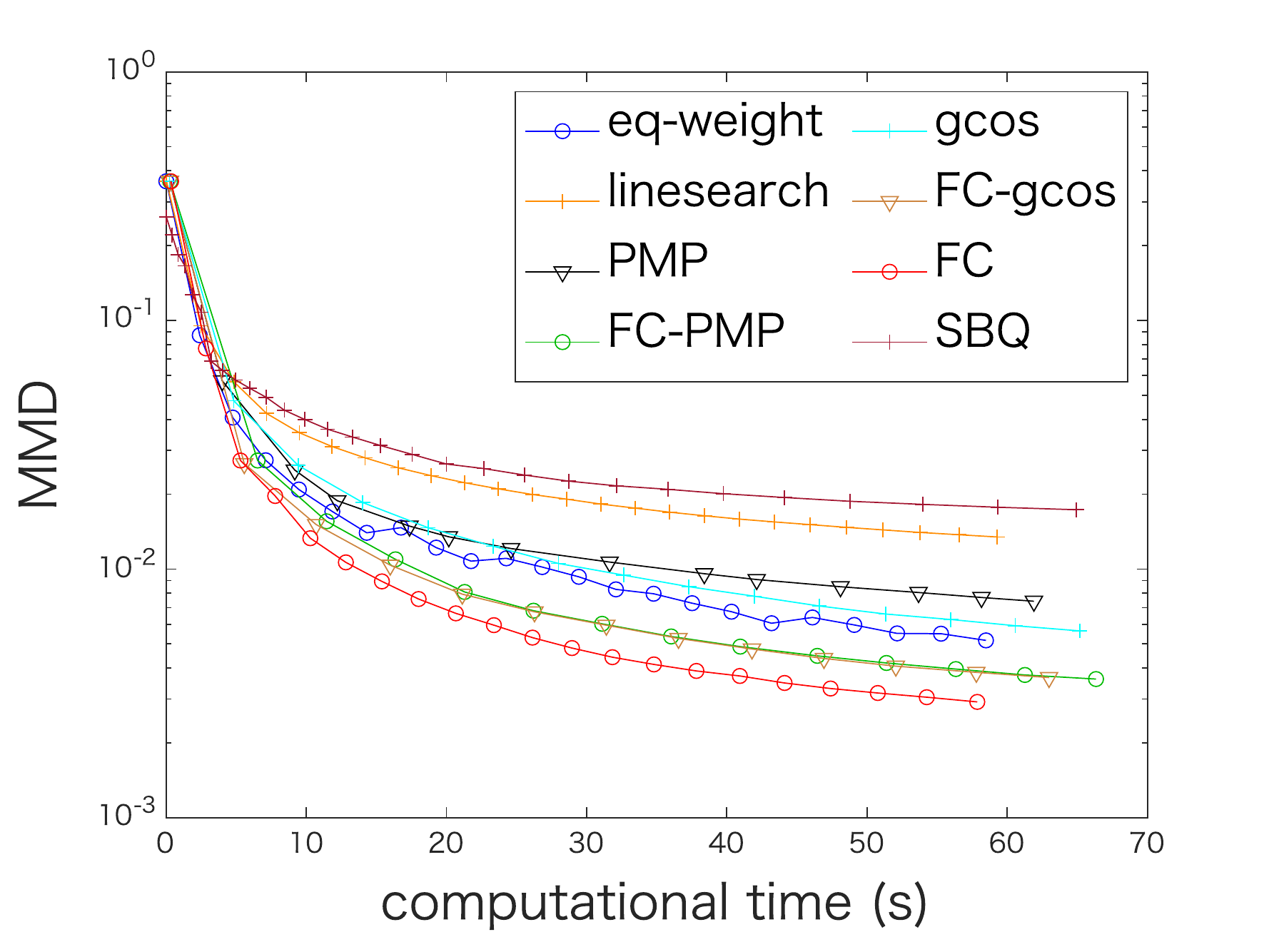}
        \subcaption{MMD for computation time ($d=3$)}
        \label{matern_time_3d}
    \end{minipage}
    \caption{Mat\'{e}rn kernel ($\nu=\frac{3}{2}$)}
    \label{Matern_gcos}
\end{figure}

\subsubsection*{Kernel herding on a sphere}
We also performed an experiment for integration on a sphere in $\zissu^3$. The kernel is $K(x, y)= \frac{8}{3} - \| x -y \|$. The domain $\Omega$ is a unit sphere centered at the origin in $\zissu^3$, and the probability distribution is uniform. It is known that the optimal rate of the worst-case error is $1/n^{\frac{3}{4}}$,  as demonstrated by \cite{brauchart2014qmc}.  In this setting, we compared the fully-corrective versions of the proposed algorithms with the ordinary fully-corrective kernel herding. \autoref{Sphere_graph} shows the results. We can see in \autoref{Sphere_graph} that all three algorithms achieve the optimal convergence speed for the number of nodes. Regarding computation time, ``FC-PMP'' and ``FC-gcos'' outperformed the ordinary fully-corrective kernel herding. As we mentioned in \autoref{FC_gcos_section}, the fully-corrective variants of both proposed algorithms are computationally efficient compared to the ordinary fully-corrective variant as the number of nodes increases. This observation explains the outperformance in terms of computation time.

\begin{figure}[h]
    \begin{minipage}[t]{0.45\hsize}
        \center
        \captionsetup{width=.95\linewidth}
        \includegraphics[keepaspectratio, scale=0.4]{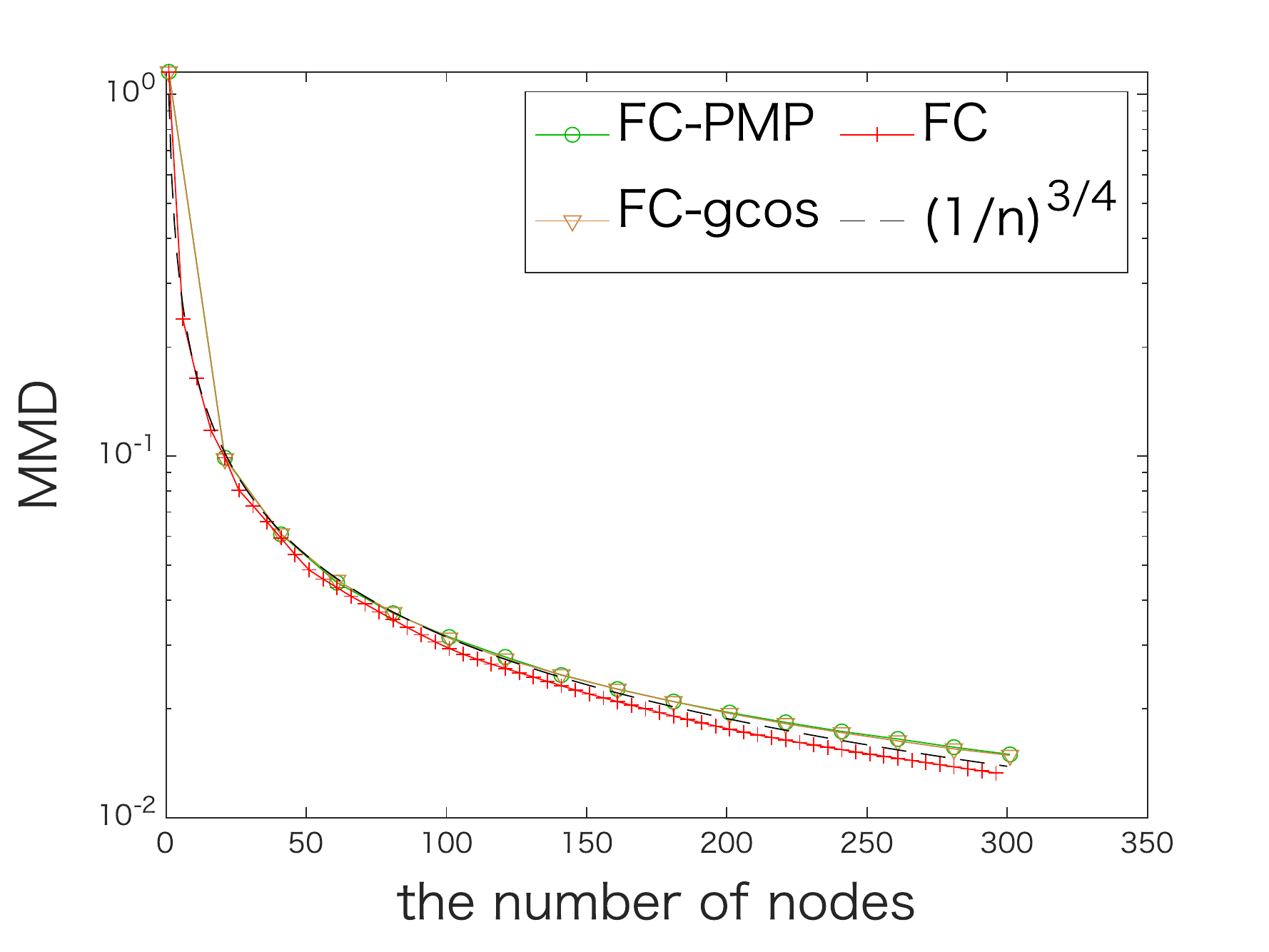}
        \subcaption{MMD for the number of nodes}
        \label{gauss_nodes}
    \end{minipage}
    \begin{minipage}[t]{0.45\hsize}
        \center
        \captionsetup{width=.95\linewidth}
        \includegraphics[keepaspectratio, scale=0.4]{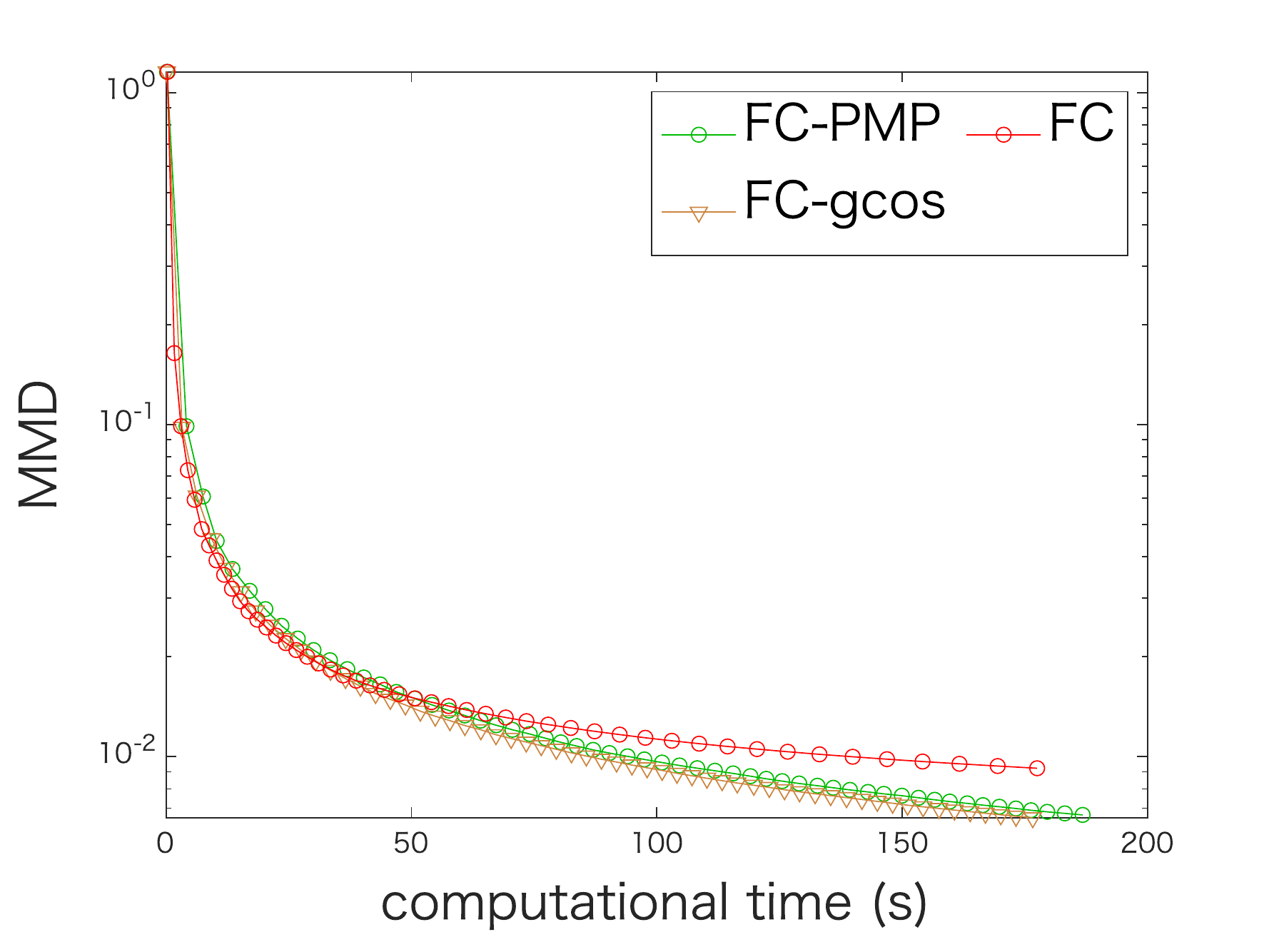}
        \subcaption{MMD for computation time}
        \label{gauss_time}
    \end{minipage}
    \caption{Kernel quadrature on a unit sphere}
    \label{Sphere_graph}
\end{figure}

\section{Theoretical analysis of fully-corrective kernel quadrature rules}\label{chapter5}

As shown in the numerical experiments in \autoref{Numerical experiments}, the fully-corrective kernel herding algorithm and proposed algorithm \autoref{boost herding} with  fully-corrective gradient approximation shown in \autoref{FC_gcos_section} perform significantly well in terms of convergence speed for the number of nodes. 
To observe the results theoretically, we analyze the convergence speed of the kernel quadrature formula with the weights $\{\omega_1, \ldots \omega_n\}$ optimized over the convex hull $\{ \omega_1, \ldots, \omega_n \mid \sum_{i=1}^n \omega_i =1, \omega_i \geq 0 \ (i=1, \ldots, n) \}.$ This problem setting is closely related to that of fully-corrective kernel herding (e.g., \cite{holloway1974extension, jaggi13fw}), which executes the optimization of the weights over the convex hull in each iteration. In addition, it is also closely related to the fully-corrective variants of \autoref{boost herding} in \autoref{chapter3}. Although the significant practical performance was confirmed in previous studies, such as \cite{10.5555/3042573.3042747, pmlr-v38-lacoste-julien15} and the previous section, the theoretical performance of the optimized weights over the convex hull was not analyzed sufficiently. In detail, although the square root convergence speed of the worst-case error was confirmed for any kernel function, theoretical analysis considering the properties of the kernel functions, such as smoothness, has not yet been conducted.

\subsection{Convergence analysis of fully-corrective kernel quadrature rules}
In this section, we analyze the theoretical aspect of the kernel quadrature rules with fully-corrective weights. This analysis is closely related to fully-corrective kernel herding and the fully-corrective variants of the algorithms introduced in \autoref{FC_gcos_section}. Although the set of nodes is fixed in the following theorem and algorithm-dependent analysis has not yet been conducted, we consider this theorem to help the analysis of the fully-corrective algorithms. 

\begin{theorem}\label{thm-FC-2}

We assume that constant functions are contained in $\mathcal{H}_K$. For the nodes $X=\{ x_1, \ldots, x_k \} \subset \Omega$, if it holds that 
\[ \| f - s_{f, X}  \|_{\infty}  \leq  \| f \|_K  r_k ,\]
 there exists a constant $C> 0$ and we have 
 \[\max_{x\in \Omega}  \left| \left< \mu_K  - \sum_{i=1}^k \omega_i K(x_i , \cdot),  K(x, \cdot) - \sum_{i=1}^k \omega_i K(x_i , \cdot) \right> \right|\leq C  r_k ,\]
where $\omega=(\omega_1, \ldots, \omega_k) = \argmin_{\omega \in \Delta(k)} \| \mu_K - \sum_{i=1}^k \omega_i K(x_i, \cdot) \|_K, \Delta(k)=\{ \omega_1, \ldots, \omega_k \mid \sum_{i=1}^k \omega_i =1, \omega_i \geq 0 \ (i=1, \ldots, k) \}$. In addition to this, we assume $\omega_i > 0\ (i=1, \ldots, k)$.  Moreover, it holds that
\[  \left\| \mu_K -  \sum_{i=1}^k \omega_i K(x_i , \cdot) \right\|_K \leq \sqrt{C  r_k}.  \]
\end{theorem}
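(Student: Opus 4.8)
The plan is to read off both displays of the theorem from the interpolation hypothesis $\|f-s_{f,X}\|_\infty\le\|f\|_K r_k$, with the bound on the inner products as the real content and the bound on $\|\mu_K-\sum_i\omega_iK(x_i,\cdot)\|_K$ as a one-line corollary. Write $\nu_K\coloneqq\sum_{i=1}^k\omega_iK(x_i,\cdot)$, $h\coloneqq\mu_K-\nu_K\in\mathcal{H}_K$, and recall the duality identities $\langle\mu_K,f\rangle_K=\int_\Omega f\,\mathrm{d}\mu$ and $\langle\nu_K,f\rangle_K=\sum_{i=1}^k\omega_if(x_i)$ for $f\in\mathcal{H}_K$ (the first being the interchange of the Bochner integral defining $\mu_K$ with the inner product, legitimate since $K$ is continuous and $\Omega$ compact); in particular $h(x)=\langle h,K(x,\cdot)\rangle_K$ for every $x$.

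For the first display I would use the optimality of $\omega$. Since $\omega$ minimizes the smooth convex function $\omega\mapsto\|\mu_K-\sum_i\omega_iK(x_i,\cdot)\|_K^2$ over the simplex and every $\omega_i>0$, the KKT stationarity condition has no active inequality multipliers, so $\partial_{\omega_i}\|\mu_K-\nu_K\|_K^2=-2h(x_i)$ is the same for all $i$; hence $h$ is constant on $X$, say $h(x_i)=c$, and then $c=\sum_i\omega_ih(x_i)=\langle h,\nu_K\rangle_K$. Consequently $h-c\mathbf 1\in\mathcal{H}_K$ (constants belong to $\mathcal{H}_K$ by assumption) vanishes at every node, so its kernel interpolant on $X$ is $0$; applying the hypothesis to $g=h-c\mathbf 1$ gives $\|h-c\mathbf 1\|_\infty\le\|h-c\mathbf 1\|_K\,r_k$. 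A crude reproducing-kernel estimate makes $\|h-c\mathbf 1\|_K\le\|\mu_K\|_K+\|\nu_K\|_K+|c|\,\|\mathbf 1\|_K$ bounded by a finite constant $C$ independent of $X$ (using $\|\mu_K\|_K,\|\nu_K\|_K\le\|K\|_\infty^{1/2}$ and $|c|=|h(x_1)|\le\|h\|_K\|K(x_1,\cdot)\|_K$). Since $h(x)-c=\langle h,K(x,\cdot)\rangle_K-\langle h,\nu_K\rangle_K=\langle h,K(x,\cdot)-\nu_K\rangle_K$, this is exactly $\max_{x\in\Omega}\bigl|\langle h,K(x,\cdot)-\nu_K\rangle_K\bigr|\le C r_k$.

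For the ``Moreover'' part I would expand the squared norm using the duality identity with $f=h$ and the same constant $c=\langle h,\nu_K\rangle_K$:
\[
\|h\|_K^2=\langle h,\mu_K\rangle_K-\langle h,\nu_K\rangle_K=\int_\Omega h\,\mathrm{d}\mu-c=\int_\Omega\bigl(h(x)-c\bigr)\,\mu(\mathrm{d}x),
\]
the last step using $\mu(\Omega)=1$. Bounding the integral by the sup-norm and recalling again that $h(x)-c=\langle h,K(x,\cdot)-\nu_K\rangle_K$ yields $\|h\|_K^2\le\max_{x\in\Omega}\bigl|\langle h,K(x,\cdot)-\nu_K\rangle_K\bigr|\le C r_k$, hence $\|\mu_K-\nu_K\|_K\le\sqrt{C r_k}$.

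The substantive work is all in the first display; within it the only point needing care is the KKT reduction — one must verify that strict positivity of the weights removes the inequality multipliers, so that stationarity really forces $h$ to be constant on $X$ (equivalently, that $h-c\mathbf 1$ interpolates to zero) and that the interpolation hypothesis applies to it. After that, the constant-function bound and the one-line computation of $\|h\|_K^2$, in which the $\nu_K$-term cancels precisely because $c=\sum_i\omega_ih(x_i)$, are routine.
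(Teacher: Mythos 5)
Your proposal is correct and follows essentially the same route as the paper: the KKT stationarity argument is just a repackaging of the paper's perturbation argument showing $\langle h, K(x_j,\cdot)-\nu_K\rangle_K=0$ for all $j$ (your $c$ is the paper's $\epsilon(X)$), applying the interpolation hypothesis to $h-c\mathbf 1$ (whose interpolant is $0$) is equivalent to the paper's application to $\mu_K-\epsilon(X)\bm 1$ (whose interpolant is $\nu_K$), and your integral computation $\|h\|_K^2=\int(h-c)\,\mathrm{d}\mu$ is the integral form of the paper's bound via $\mu_K\in M$. The constants and the role of the positivity assumption $\omega_i>0$ are handled as in the paper, so no substantive difference remains.
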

 \begin{proof}
We show that for each $j \in \{ 1, \ldots, k\}$, it holds that
\[ \left<\mu_K -\sum_{i=1}^k \omega_i K(x_i, \cdot), K(x_j, \cdot) - \sum_{i=1}^k \omega_i K(x_i, \cdot) \right>_K = 0 .\]
 Let $j $ be an integer in $\{ 1, \ldots, k\}$. If $\left<\mu_K -\sum_{i=1}^k \omega_i K(x_i, \cdot), K(x_j, \cdot) - \sum_{i=1}^k \omega_i K(x_i, \cdot) \right>_K  > 0$, the  following quadratic function with respect to $\alpha $
\[ \left\| \mu_K - \sum_{i=1}^k \omega_i K(x_i, \cdot) - \alpha \left( K(x_j, \cdot) -   \sum_{i=1}^k \omega_i K(x_i, \cdot) \right)  \right\|_K ^2  \]
takes its minimum at $\alpha = \frac{\left<  \mu_K - \sum_{i=1}^k \omega_i K(x_i, \cdot),  K(x_j, \cdot) -   \sum_{i=1}^k \omega_i K(x_i, \cdot) \right>_K}{\| K(x_j, \cdot) -   \sum_{i=1}^k \omega_i K(x_i, \cdot)\|_K ^2} > 0$. This contradicts the minimality of $\left\| \mu_K -  \sum_{i=1}^k \omega_i K(x_i , \cdot) \right\|_K ^2$ because we can decrease the  function value by replacing $\sum_{i=1}^k \omega_i K(x_i, \cdot)$ with 
\[(1-\alpha)\sum_{i=1}^k \omega_i K(x_i, \cdot) + \alpha K(x_j, \cdot)\]
 for a sufficiently small $\alpha > 0$. Next, we consider the case of 
 \[\left<\mu_K -\sum_{i=1}^k \omega_i K(x_i, \cdot), K(x_j, \cdot) - \sum_{i=1}^k \omega_i K(x_i, \cdot) \right>_K  <  0.\]
  By applying the same argument to the function
\[ \left\| \mu_K - \sum_{i=1}^k \omega_i K(x_i, \cdot) - \alpha \left( \sum_{i=1}^k \omega_i K(x_i, \cdot)  - K(x_j, \cdot ) \right) \right\|_K ^2 , \]
we can derive a contradiction. Therefore, it holds that
 
 \begin{equation}\label{eq-optimized1}
  \left<\mu_K -\sum_{i=1}^k \omega_i K(x_i, \cdot), K(x_j, \cdot) - \sum_{i=1}^k \omega_i K(x_i, \cdot), \right>_K =0\quad (j=1, \ldots, k).
\end{equation}

We set $z= (\mu_K(x_1), \ldots, \mu_K(x_k)) ^\top$, $\epsilon(X)= \left< \sum_{i=1}^k \omega_i K(x_i, \cdot), \mu_K -\sum_{i=1}^k \omega_i K(x_i, \cdot) \right>_K$, and  $K= (K(x_i, x_j))_{i, j}$.   Then, (\ref{eq-optimized1}) can be rewritten as
\begin{equation}\label{eq-optimized2} 
 K \omega = z - \epsilon(X) \mathbbm{1} \quad (\mathbbm{1} = (1, \ldots, 1)^\top).
 \end{equation}

 By the equality (\ref{eq-optimized2}), the weights $\omega_1, \ldots, \omega_k$ can be considered as the weights of the interpolation of $\mu_K - \epsilon(X) \bm{1}$, where $\bm{1}$ is a constant function that identically outputs $1$.  By the Cauchy-Shwarz inequality, we can bound $\epsilon(X)$ as follows:
 \[ \epsilon(X)= \left<\mu_K -\sum_{i=1}^k \omega_i K(x_i , \cdot), \sum_{i=1}^k \omega_i K(x_i, \cdot) \right>_K \leq  2 \sup_{\| f \|_K \leq 1} \sup_{x\in \Omega} |f(x)| \leq  2\| K \|_{\infty}  .\]
 We note that $\|\mu_K \|_K$ and $ \|\sum_{i=1}^k \omega_i K(x_i, \cdot)\|_K $ are bounded by $ \sup_{\| f \|_K \leq 1} \sup_{x\in \Omega} |f(x)|$. Therefore, by using the assumption for the constant function, we can bound the RKHS norm of  $\mu_K - \epsilon(X) \bm{1}$ by a positive constant. Therefore, we can use the assumption, and it holds that
 \begin{align*}
 &\max_{x\in \Omega}  | \mu_K (x) -  \epsilon(X) \bm{1}(x) - \sum_{i=1}^k \omega_i K(x_i , x)  |\leq C^{'} r_k  \\
 &\Leftrightarrow   \max_{x\in \Omega}  \left| \left< \mu_K  - \sum_{i=1}^k \omega_i K(x_i , \cdot),  K(x, \cdot) - \sum_{i=1}^k \omega_i K(x_i , \cdot) \right>_K \right|\leq C^{'}  r_k  .
 \end{align*}
 In addition, using $\mu_K \in M$, we have
 \begin{align*}
  \| \mu_K  - \sum_{i=1}^k \omega_i K(x_i , \cdot)\|_K ^2   &= \left<   \mu_K  - \sum_{i=1}^k \omega_i K(x_i , \cdot),  \mu_K  - \sum_{i=1}^k \omega_i K(x_i , \cdot) \right>_K  \\
  &\leq \max_{x\in \Omega}\left< \mu_K  - \sum_{i=1}^k \omega_i K(x_i , \cdot),  K(x, \cdot) - \sum_{i=1}^k \omega_i K(x_i , \cdot) \right>_K .
\end{align*}
 Thus, we have the desired inequality. 
 \end{proof}
 
 \begin{remark}
 Theorem~\ref{thm-FC-2} claims that the  distribution of nodes for the kernel interpolation is also effective for the kernel quadrature rule. For example, if the domain $\Omega$ satisfies some conditions, the kernel interpolation in the Sobolev space  $W_2 ^\beta (\Omega) \ (\beta > d/2)$ 
 satisfies 
 \[ \| f - s_{f, X} \|_\infty \leq C \| f \|_{W_2 ^\beta (\Omega)} h_{X,\Omega} ^{\beta - \frac{d}{2}} ,\]
 where $f \in W_2 ^\beta (\Omega), X= \{ x_1, \ldots, x_n \} \subset \Omega $ and $h_{X,\Omega} = \sup_{x\in \Omega} \min_{x_j \in X} \|  x - x_j\|_2$. In this regard, we refer the reader to \cite{wu1993local, schaback1995error}. Therefore, if $h_{X,\Omega} \leq c_1 n^{-\frac{1}{d}}$, the worst-case error is upper bounded by $n^{-\frac{\beta}{2d} +\frac{1}{4}}$. If $\beta$ is sufficiently large, the convergence rate is faster than the known rate $n^{-\frac{1}{2}}$.  
 
 However, the upper bound in  Theorem~\ref{thm-FC-2}  is not optimal. It is known that the optimal rate of the error of the interpolation and quadrature is $n^{-\frac{\beta}{d}}$  for the Sobolev space  $W_2 ^\beta (\Omega)$ \citep{jerome1970n, novak2006deterministic}. Therefore, Theorem~\ref{thm-FC-2} can only  show the convergence rate up to $n^{-\frac{\beta}{2d}}$, but it is not optimal. Thus, there is room for improvement for the theoretical analysis of the fully-corrective kernel quadrature. 
  \end{remark}

\section{Conclusion}
In this paper, to derive quadrature rules with sparser nodes by kernel herding, we proposed the improved kernel herding algorithm whose concept is approximating the negative gradient by several vertex directions. We proposed the two gradient approximation methods \autoref{positive matching pursuit} and \autoref{greedy cos} and their fully-corrective versions. We provided theoretical analysis of the algorithms, and numerical experiments showed significant improvements in convergence speed of the integration error for the sparsity of nodes and computation time.  In \autoref{chapter5}, we studied the convergence properties of the fully-corrective kernel quadrature formulas. We provided a new analysis using the relationship with kernel interpolation and showed a better convergence rate than the known rate $O(1/\sqrt{n})$. This new analysis gave partial theoretical support to the remarkable performance of the fully-corrective kernel herding and fully-corrective variants of \autoref{boost herding} proposed in \autoref{FC_gcos_section}. 

\subsection*{Future work}
\begin{itemize}
\item Although we confirmed the practical performance of the proposed algorithms in terms of solution sparsity,  theoretical analysis on the sparsity has not been sufficiently conducted. Therefore, convergence analysis of the proposed algorithms for the sparsity of nodes must be further explored. The results in \autoref{chapter5} may help us to analyze the sparsity.  
\item 
Application of the algorithms in \autoref{chapter3} to finite-dimensional problems might be effective, especially for the sparsity of solutions. We can test the application in future work.

\end{itemize}

\acks{This work was partly supported by JST, PRESTO Grant Number JPMJPR2023, Japan. We would like to thank Editage (www.editage.com) for English language editing.}


\vskip 0.2in
\bibliography{refs}

\begin{thebibliography}{36}
\providecommand{\natexlab}[1]{#1}
\providecommand{\url}[1]{\texttt{#1}}
\expandafter\ifx\csname urlstyle\endcsname\relax
  \providecommand{\doi}[1]{doi: #1}\else
  \providecommand{\doi}{doi: \begingroup \urlstyle{rm}\Url}\fi

\bibitem[Bach et~al.(2012)Bach, Lacoste-Julien, and
  Obozinski]{10.5555/3042573.3042747}
Francis Bach, Simon Lacoste-Julien, and Guillaume Obozinski.
\newblock On the equivalence between herding and conditional gradient
  algorithms.
\newblock In \emph{Proceedings of the 29th International Coference on
  International Conference on Machine Learning}, ICML'12, pages 1355--1362,
  Madison, WI, USA, 2012. Omnipress.
\newblock ISBN 9781450312851.

\bibitem[Beck and Teboulle(2004)]{beck2004conditional}
Amir Beck and Marc Teboulle.
\newblock A conditional gradient method with linear rate of convergence for
  solving convex linear systems.
\newblock \emph{Mathematical Methods of Operations Research}, 59\penalty0
  (2):\penalty0 235--247, 2004.

\bibitem[Belhadji et~al.(2019)Belhadji, Bardenet, and
  Chainais]{NEURIPS2019_7012ef03}
Ayoub Belhadji, R\'{e}mi Bardenet, and Pierre Chainais.
\newblock Kernel quadrature with dpps.
\newblock In H.~Wallach, H.~Larochelle, A.~Beygelzimer, F.~d'~Alch\'{e}-Buc,
  E.~Fox, and R.~Garnett, editors, \emph{Advances in Neural Information
  Processing Systems}, volume~32. Curran Associates, Inc., 2019.
\newblock URL
  \url{https://proceedings.neurips.cc/paper/2019/file/7012ef0335aa2adbab58bd6d0702ba41-Paper.pdf}.

\bibitem[Brauchart et~al.(2014)Brauchart, Saff, Sloan, and
  Womersley]{brauchart2014qmc}
Johann Brauchart, E~Saff, I~Sloan, and R~Womersley.
\newblock Qmc designs: optimal order quasi monte carlo integration schemes on
  the sphere.
\newblock \emph{Mathematics of computation}, 83\penalty0 (290):\penalty0
  2821--2851, 2014.

\bibitem[Briol et~al.(2017)Briol, Oates, Cockayne, Chen, and
  Girolami]{briol2017sampling}
Fran{\c{c}}ois-Xavier Briol, Chris~J Oates, Jon Cockayne, Wilson~Ye Chen, and
  Mark Girolami.
\newblock On the sampling problem for kernel quadrature.
\newblock In \emph{International Conference on Machine Learning}, pages
  586--595. PMLR, 2017.

\bibitem[Briol et~al.(2019)Briol, Oates, Girolami, Osborne, Sejdinovic,
  et~al.]{briol2019probabilistic}
Fran{\c{c}}ois-Xavier Briol, Chris~J Oates, Mark Girolami, Michael~A Osborne,
  Dino Sejdinovic, et~al.
\newblock Probabilistic integration: A role in statistical computation?
\newblock \emph{Statistical Science}, 34\penalty0 (1):\penalty0 1--22, 2019.

\bibitem[Chen et~al.(2010)Chen, Welling, and Smola]{10.5555/3023549.3023562}
Yutian Chen, Max Welling, and Alex Smola.
\newblock Super-samples from kernel herding.
\newblock In \emph{Proceedings of the Twenty-Sixth Conference on Uncertainty in
  Artificial Intelligence}, UAI'10, pages 109--116, Arlington, Virginia, USA,
  2010. AUAI Press.
\newblock ISBN 9780974903965.

\bibitem[Combettes and Pokutta(2020)]{pmlr-v119-combettes20a}
Cyrille Combettes and Sebastian Pokutta.
\newblock Boosting frank-{W}olfe by chasing gradients.
\newblock In Hal Daum~00E9 III and Aarti Singh, editors, \emph{Proceedings of
  the 37th International Conference on Machine Learning}, volume 119 of
  \emph{Proceedings of Machine Learning Research}, pages 2111--2121. PMLR,
  13--18 Jul 2020.
\newblock URL \url{http://proceedings.mlr.press/v119/combettes20a.html}.

\bibitem[De~Marchi et~al.(2005)De~Marchi, Schaback, and Wendland]{de2005near}
Stefano De~Marchi, Robert Schaback, and Holger Wendland.
\newblock Near-optimal data-independent point locations for radial basis
  function interpolation.
\newblock \emph{Advances in Computational Mathematics}, 23\penalty0
  (3):\penalty0 317--330, 2005.

\bibitem[Diaconis(1988)]{diaconis1988bayesian}
Persi Diaconis.
\newblock Bayesian numerical analysis.
\newblock \emph{Statistical decision theory and related topics IV}, 1:\penalty0
  163--175, 1988.

\bibitem[Dunn(1980)]{dunn1980convergence}
Joseph~C Dunn.
\newblock Convergence rates for conditional gradient sequences generated by
  implicit step length rules.
\newblock \emph{SIAM Journal on Control and Optimization}, 18\penalty0
  (5):\penalty0 473--487, 1980.

\bibitem[Frank and Wolfe(1956)]{fw56}
M.~Frank and P.~Wolfe.
\newblock An algorithm for quadratic programming.
\newblock \emph{Naval Research Logistics Quarterly}, 3\penalty0
  (1--2):\penalty0 95--110, 1956.

\bibitem[Gretton et~al.(2012)Gretton, Borgwardt, Rasch, Sch{\"o}lkopf, and
  Smola]{gretton2012kernel}
Arthur Gretton, Karsten~M Borgwardt, Malte~J Rasch, Bernhard Sch{\"o}lkopf, and
  Alexander Smola.
\newblock A kernel two-sample test.
\newblock \emph{The Journal of Machine Learning Research}, 13\penalty0
  (1):\penalty0 723--773, 2012.

\bibitem[Hayakawa et~al.(2021)Hayakawa, Oberhauser, and
  Lyons]{hayakawa2021positively}
Satoshi Hayakawa, Harald Oberhauser, and Terry Lyons.
\newblock Positively weighted kernel quadrature via subsampling.
\newblock \emph{arXiv preprint arXiv:2107.09597}, 2021.

\bibitem[Holloway(1974)]{holloway1974extension}
Charles~A Holloway.
\newblock An extension of the frank and wolfe method of feasible directions.
\newblock \emph{Mathematical Programming}, 6\penalty0 (1):\penalty0 14--27,
  1974.

\bibitem[Husz\'{a}r and Duvenaud(2012)]{10.5555/3020652.3020694}
Ferenc Husz\'{a}r and David Duvenaud.
\newblock Optimally-weighted herding is bayesian quadrature.
\newblock In \emph{Proceedings of the Twenty-Eighth Conference on Uncertainty
  in Artificial Intelligence}, UAI'12, pages 377--386, Arlington, Virginia,
  USA, 2012. AUAI Press.
\newblock ISBN 9780974903989.

\bibitem[Jaggi(2013)]{jaggi13fw}
M.~Jaggi.
\newblock Revisiting {F}rank-{W}olfe: projection-free sparse convex
  optimization.
\newblock In \emph{Proceedings of the 30th International Conference on Machine
  Learning}, pages 427--435, 2013.

\bibitem[Jerome(1970)]{jerome1970n}
Joseph~W Jerome.
\newblock On n-widths in sobolev spaces and applications to elliptic boundary
  value problems.
\newblock \emph{Journal of Mathematical Analysis and Applications}, 29\penalty0
  (1):\penalty0 201--215, 1970.

\bibitem[Kanagawa et~al.(2018)Kanagawa, Hennig, Sejdinovic, and
  Sriperumbudur]{kanagawa2018gaussian}
Motonobu Kanagawa, Philipp Hennig, Dino Sejdinovic, and Bharath~K
  Sriperumbudur.
\newblock Gaussian processes and kernel methods: A review on connections and
  equivalences.
\newblock \emph{arXiv preprint arXiv:1807.02582}, 2018.

\bibitem[Kanagawa et~al.(2020)Kanagawa, Sriperumbudur, and
  Fukumizu]{kanagawa2020convergence}
Motonobu Kanagawa, Bharath~K Sriperumbudur, and Kenji Fukumizu.
\newblock Convergence analysis of deterministic kernel-based quadrature rules
  in misspecified settings.
\newblock \emph{Foundations of Computational Mathematics}, 20\penalty0
  (1):\penalty0 155--194, 2020.

\bibitem[Lacoste-Julien and Jaggi(2015)]{lacoste15}
Simon Lacoste-Julien and Martin Jaggi.
\newblock On the global linear convergence of {F}rank-{W}olfe optimization
  variants.
\newblock In C.~Cortes, N.~D. Lawrence, D.~D. Lee, M.~Sugiyama, and R.~Garnett,
  editors, \emph{Advances in Neural Information Processing Systems 28}, pages
  496--504. Curran Associates, Inc., 2015.
\newblock URL
  \url{http://papers.nips.cc/paper/5925-on-the-global-linear-convergence-of-frank-wolfe-optimization-variants.pdf}.

\bibitem[Lacoste-Julien et~al.(2015)Lacoste-Julien, Lindsten, and
  Bach]{pmlr-v38-lacoste-julien15}
Simon Lacoste-Julien, Fredrik Lindsten, and Francis Bach.
\newblock {Sequential Kernel Herding: Frank-Wolfe Optimization for Particle
  Filtering}.
\newblock In Guy Lebanon and S.~V.~N. Vishwanathan, editors, \emph{Proceedings
  of the Eighteenth International Conference on Artificial Intelligence and
  Statistics}, volume~38 of \emph{Proceedings of Machine Learning Research},
  pages 544--552, San Diego, California, USA, 09--12 May 2015. PMLR.
\newblock URL \url{http://proceedings.mlr.press/v38/lacoste-julien15.html}.

\bibitem[Larkin(1972)]{larkin1972gaussian}
FM~Larkin.
\newblock Gaussian measure in hilbert space and applications in numerical
  analysis.
\newblock \emph{The Rocky Mountain Journal of Mathematics}, pages 379--421,
  1972.

\bibitem[Levitin and Polyak(1966)]{polyak66cg}
E.~S. Levitin and B.~T. Polyak.
\newblock Constrained minimization methods.
\newblock \emph{USSR Computational Mathematics and Mathematical Physics},
  6\penalty0 (5):\penalty0 1--50, 1966.

\bibitem[Locatello et~al.(2017)Locatello, Tschannen, R{\"a}tsch, and
  Jaggi]{locatello2017greedy}
Francesco Locatello, Michael Tschannen, Gunnar R{\"a}tsch, and Martin Jaggi.
\newblock Greedy algorithms for cone constrained optimization with convergence
  guarantees.
\newblock In \emph{Advances in Neural Information Processing Systems}, pages
  773--784, 2017.

\bibitem[Novak(2006)]{novak2006deterministic}
Erich Novak.
\newblock Deterministic and stochastic error bounds in numerical analysis.
\newblock 2006.

\bibitem[Oettershagen(2017)]{oettershagen2017construction}
Jens Oettershagen.
\newblock \emph{Construction of optimal cubature algorithms with applications
  to econometrics and uncertainty quantification}.
\newblock 2017.

\bibitem[O'Hagan(1991)]{o1991bayes}
Anthony O'Hagan.
\newblock Bayes--hermite quadrature.
\newblock \emph{Journal of statistical planning and inference}, 29\penalty0
  (3):\penalty0 245--260, 1991.

\bibitem[Polyak(1987)]{polyak1987introduction}
Boris~T Polyak.
\newblock Introduction to optimization. optimization software.
\newblock \emph{Inc., Publications Division, New York}, 1, 1987.

\bibitem[Pronzato(2021)]{pronzato2021performance}
Luc Pronzato.
\newblock Performance analysis of greedy algorithms for minimising a maximum
  mean discrepancy.
\newblock \emph{arXiv preprint arXiv:2101.07564}, 2021.

\bibitem[Schaback(1995)]{schaback1995error}
Robert Schaback.
\newblock Error estimates and condition numbers for radial basis function
  interpolation.
\newblock \emph{Advances in Computational Mathematics}, 3\penalty0
  (3):\penalty0 251--264, 1995.

\bibitem[Teymur et~al.(2020)Teymur, Gorham, Riabiz, Oates,
  et~al.]{teymur2020optimal}
Onur Teymur, Jackson Gorham, Marina Riabiz, Chris Oates, et~al.
\newblock Optimal quantisation of probability measures using maximum mean
  discrepancy.
\newblock \emph{arXiv preprint arXiv:2010.07064}, 2020.

\bibitem[Welling(2009)]{welling2009herding}
Max Welling.
\newblock Herding dynamical weights to learn.
\newblock In \emph{Proceedings of the 26th Annual International Conference on
  Machine Learning}, pages 1121--1128, 2009.

\bibitem[Wendland(2004)]{wendland2004scattered}
Holger Wendland.
\newblock \emph{Scattered data approximation}, volume~17.
\newblock Cambridge university press, 2004.

\bibitem[Wolfe(1970)]{wolfe70}
P.~Wolfe.
\newblock Convergence theory in nonlinear programming.
\newblock In \emph{Integer and Nonlinear Programming}, pages 1--36.
  North-Holland, Amsterdam, 1970.

\bibitem[Wu and Schaback(1993)]{wu1993local}
Zong-min Wu and Robert Schaback.
\newblock Local error estimates for radial basis function interpolation of
  scattered data.
\newblock \emph{IMA journal of Numerical Analysis}, 13\penalty0 (1):\penalty0
  13--27, 1993.

\end{thebibliography}

\newpage

\appendix
\section{Proofs}\label{Proofs}

 \subsection{Proof of $\mu_K \in M$}
   \begin{lemma}\label{proof of lemma}
   If $K(\cdot, \cdot)$ is uniformly continuous on $\Omega \times \Omega$ and bounded, for any Borel probability measure $\mu$ and $\mu_K$, which is the embedding of $\mu$, there exists $\{ x_i \}_{i=1}^\infty \subset \Omega$ such that 
   \[ \mu_K \in \overline{\mathrm{conv}( \{ K(x_i, \cdot) \}_{i=1}^\infty} ),\]
   where closure is considered with respect to $\| \cdot \|_{K}$.
   \end{lemma}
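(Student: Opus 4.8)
The plan is to approximate $\mu_K$ in $\|\cdot\|_K$ by finitely supported convex combinations of feature maps $K(x,\cdot)$, using uniform continuity of $K$ to make the approximation error small, and then to collect the (countably many) nodes produced by a sequence of such approximations.

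First I would record that the feature map $\Phi\colon\Omega\to\mathcal{H}_K$, $\Phi(x)\coloneqq K(x,\cdot)$, is continuous and bounded: $\|\Phi(x)-\Phi(y)\|_K^2=K(x,x)-2K(x,y)+K(y,y)\to 0$ as $y\to x$ by continuity of $K$, and $\|\Phi(x)\|_K^2=K(x,x)\le\|K\|_\infty$. Since $\Omega$ is compact, $\Phi(\Omega)$ is separable, so $\Phi$ is strongly $\mu$-measurable and, being bounded, Bochner $\mu$-integrable; the element $\mu_K$ of (\ref{eq-embed}) is exactly the Bochner integral $\int_\Omega\Phi(x)\,\mu(\mathrm{d}x)$, which in particular makes the vector-valued triangle inequality $\|\int_\Omega\Phi\,\mathrm{d}\mu\|_K\le\int_\Omega\|\Phi\|_K\,\mathrm{d}\mu$ available. (Alternatively one can sidestep Bochner integrals entirely by expanding $\|\mu_K-\nu\|_K^2$ through the reproducing property and grouping the resulting double integral over the partition blocks below.)

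Next I would fix $\varepsilon>0$. By uniform continuity of $K$ on $\Omega\times\Omega$ there is $\delta>0$ such that $\|x-y\|_2<\delta$ forces $|K(x,x)-K(x,y)|<\varepsilon/2$ and $|K(y,y)-K(x,y)|<\varepsilon/2$, hence $\|\Phi(x)-\Phi(y)\|_K^2<\varepsilon$. Covering $\Omega$ by finitely many balls of radius $\delta/2$ (possible by compactness) and disjointifying them gives a Borel partition $\Omega=\bigsqcup_{j=1}^{m}\Omega_j$ with $\mathrm{diam}(\Omega_j)<\delta$; I discard indices with $\mu(\Omega_j)=0$ and pick $x_j\in\Omega_j$ for the rest. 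Setting $\nu_\varepsilon\coloneqq\sum_j\mu(\Omega_j)\,\Phi(x_j)$, a convex combination of elements of $V$, I estimate
\[ \|\mu_K-\nu_\varepsilon\|_K=\Bigl\|\sum_j\int_{\Omega_j}\bigl(\Phi(x)-\Phi(x_j)\bigr)\mu(\mathrm{d}x)\Bigr\|_K\le\sum_j\int_{\Omega_j}\|\Phi(x)-\Phi(x_j)\|_K\,\mu(\mathrm{d}x)\le\sqrt{\varepsilon}. \]

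Finally, applying this with $\varepsilon=1/n$ produces, for each $n\in\sizen$, a finite convex combination $\nu^{(n)}$ of feature maps at points $\{x^{(n)}_j\}_j\subset\Omega$ with $\|\mu_K-\nu^{(n)}\|_K\le 1/\sqrt{n}$. Enumerating $\bigcup_{n\ge 1}\{x^{(n)}_j\}_j$ as $\{x_i\}_{i=1}^\infty$, each $\nu^{(n)}$ lies in $\mathrm{conv}(\{K(x_i,\cdot)\}_{i=1}^\infty)$ and $\nu^{(n)}\to\mu_K$ in $\mathcal{H}_K$, so $\mu_K\in\overline{\mathrm{conv}(\{K(x_i,\cdot)\}_{i=1}^\infty)}$, which is the claim. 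The only genuinely technical points are in the second paragraph — identifying $\mu_K$ with a Bochner integral so the vector-valued triangle inequality applies (or, equivalently, justifying the inner-product expansion) — and turning the finite ball cover into a measurable partition; the rest is a routine $\varepsilon$-argument.
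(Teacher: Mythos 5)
Your proof is correct, but it takes a genuinely different route from the paper's. The paper argues probabilistically: it draws an i.i.d.\ sample from $\mu$, applies the strong law of large numbers to get $\frac{1}{n}\sum_i K(x_i,y_j)\to\int K(x,y_j)\,\mu(\mathrm{d}x)$ simultaneously for a countable dense set of test points $y_j$, upgrades this to all $y\in\Omega$ by uniform continuity, and concludes from density of $\mathrm{span}\{K(y,\cdot)\}$ in $\mathcal{H}_K$ that the empirical embeddings converge to $\mu_K$. That argument establishes convergence of the empirical means only in the weak topology (pairings against kernel sections), so passing to membership in the \emph{norm}-closure of the convex hull tacitly relies on the fact that a norm-closed convex set is weakly closed (Mazur); your deterministic construction sidesteps this entirely by producing convex combinations $\nu^{(n)}$ with an explicit norm bound $\|\mu_K-\nu^{(n)}\|_K\le 1/\sqrt{n}$, quantified by the modulus of continuity of $K$. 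The trade-off is that your finite-cover step leans on compactness of $\Omega$ (a standing assumption of the paper, though not restated in the lemma; without it you would need tightness of $\mu$ to reduce to a compact set of measure $\ge 1-\varepsilon$), whereas the paper's LLN argument needs only separability plus uniform continuity and boundedness. Your handling of the Bochner-integral identification of $\mu_K$ (or the equivalent double-integral expansion of $\|\mu_K-\nu_\varepsilon\|_K^2$ via the reproducing property) is the right technical care to take, and the estimate $\|\Phi(x)-\Phi(x_j)\|_K^2=K(x,x)-2K(x,x_j)+K(x_j,x_j)<\varepsilon$ on each partition block is exactly what makes the Riemann-sum bound go through.
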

 \begin{proof}
 Let $\{X_i\}_{i=1}^\infty$ be an i.i.d sequence of r.v.  that satisfies $X_i \sim \mu\ (i=1, 2, \ldots)$. By the law of large numbers, for any $y \in \Omega$, the following holds true: 
 \begin{equation} \label{eq-lem1}
  \lim_{n\to \infty} \sum_{i=1}^n \frac{1}{n} K(X_i, y) \to  \int_{\Omega} K(x, y) \mu(\mathrm{d}x) 
 \end{equation}
 with a probability of 1. Because $\zissu^d$ is separable, we can take $\{ y_i \}_{i=1}^\infty \subset \Omega$, which is dense in $\Omega$. In addition, because an intersection of the countable sets of measure $1$ is  measure $1$, from (\ref{eq-lem1}), there exists $\{ x_i \}_{i=1}^\infty \subset \Omega$ such that
 \begin{equation} \label{eq-lem2}
  \lim_{n\to \infty} \sum_{i=1}^n \frac{1}{n} K(x_i, y_j) \to  \int_{\Omega} K(x, y_j) \mu(\mathrm{d}x) 
 \end{equation}
for any $y_j \in \{ y_i \}_{i=1}^\infty$. By the assumption,  $K$ is uniformly continuous in $\Omega \times \Omega$. Thus, for any $\epsilon > 0$, there exists $\delta > 0$ such that if $| (x_1, y_1) - (x_2, y_2) | < \delta$, then $| K(x_1, y_1) - K(x_2, y_2) | < \epsilon$. In addition, for any $y\in \Omega$ and $\delta > 0$, there exists $y_j \in \{ y_i \}_{i=1}^\infty $ such that $| y - y_j | < \delta$. Therefore, for any $\epsilon > 0$ and $y\in \Omega$, we take $y_j$ such that $|y -y_j| <\delta$ and the following is valid: 
\[ \left| \sum_{i=1}^n \frac{1}{n} K(x_i, y_j) - \sum_{i=1}^n \frac{1}{n} K(x_i, y)  -  \left( \int_{\Omega} K(x, y_j) \mu(\mathrm{d}x)  - \int_{\Omega} K(x, y) \mu(\mathrm{d}x)  \right) \right| < 2\epsilon .  \] 
By (\ref{eq-lem2}), we let $n \to \infty$ and 
\[ \lim_{n \to \infty} \left|  \sum_{i=1}^n \frac{1}{n} K(x_i, y_j) -  \int_{\Omega} K(x, y_j) \mu(\mathrm{d}x)    \right| < 2\epsilon .\]
for any $\epsilon > 0$. This means that for each $y\in \Omega$, 
\[  \lim_{n\to \infty}\left<  \sum_{i=1}^n \frac{1}{n} K(x_i, \cdot), K(y, \cdot)\right>_K \to  \left< K(y, \cdot), \mu_K \right>_K.\]
Because the subspace of $\mathcal{H}_K$ spanned by $\{ K(y, \cdot) \mid y\in \Omega \}$ is dense in $\mathcal{H}_K$, $\mu_K \in \overline{\mathrm{conv}( \{ K(x_i, \cdot) \}_{i=1}^\infty} ) $.
 \end{proof}

\end{document}